\numberwithin{equation}{section}
\newtheorem{theorem}{Theorem}[section]
\newtheorem{definition}{Definition}[section]
\newtheorem{lemma}{Lemma}[section]
\newtheorem{remark}{Remark}[section]
\newcommand{\A}{{\mathcal A}}
\newcommand{\g}{\gamma}
\newcommand{\8}{\infty}
\newcommand{\el}{\ell}
\newcommand{\be}{\begin{eqnarray*}}
\newcommand{\ee}{\end{eqnarray*}}
\newcommand{\beq}{\begin{equation}}
\newcommand{\eeq}{\end{equation}}
\newcommand{\beqn}{\begin{equation*}}
\newcommand{\eeqn}{\end{equation*}}
\newcommand{\bs}{\begin{split}}
\newcommand{\es}{\end{split}}
\numberwithin{equation}{section}
\begin{document}
\title{Atomic decomposition of real-variable type for Bergman spaces in the unit ball of $\mathbb{C}^n$}

\thanks{{\it 2010 Mathematics Subject Classification:} 32A36, 32A50.}
\thanks{{\it Key words:} Bergman space, atomic decomposition, homogeneous space, maximal function, Bergman metric.}

\author{Zeqian Chen}

\address{Wuhan Institute of Physics and Mathematics, Chinese Academy of Sciences, West District 30, Xiao-Hong-Shan, Wuhan 430071, China}
\email{zqchen@wipm.ac.cn}

%\thanks{Z. Chen is partially supported by NSFC grant No. 11171338.}

\author{Wei Ouyang}

\address{Institute of Geodesy and Geophysics, Chinese Academy of Sciences, 340 Xu-Dong Road, Wuhan 430077, China
and University of Chinese Academy of Sciences, Beijing 100049, China}

%\thanks{}

\date{}
\maketitle

\markboth{Z. Chen and W. Ouyang}%
{Bergman spaces}

\begin{abstract}
In this paper, we show that every (weighted) Bergman space $\mathcal{A}^p_{\alpha} (\mathbb{B}_n)$ in the complex ball admits an atomic decomposition of real-variable type for any $0 < p \le 1$ and $\alpha > -1.$ More precisely, for each $f \in \mathcal{A}^p_{\alpha} (\mathbb{B}_n)$ there exist a sequence of real-variable $(p, \8)_{\alpha}$-atoms $a_k$ and a scalar sequence $\{\lambda_k \}$ with $\sum_k | \lambda_k |^p < \8$ such that $f = \sum_k \lambda_k P_{\alpha} ( a_k),$ where $P_{\alpha}$ is the Bergman projection from $L^2_{\alpha} (\mathbb{B}_n)$ onto $\mathcal{A}^2_{\alpha} (\mathbb{B}_n).$ The proof is constructive, and our construction is based on some sharp estimates about Bergman metric and Bergman kernel functions in $\mathbb{B}_n.$
\end{abstract}

%\newpage

%\tableofcontents

\section{Introduction}\label{intro}

Atomic decomposition, initiated by Coifman \cite{Coifman1974}, plays a fundamental role in harmonic analysis. For instance, atomic decomposition is a powerful tool for dealing with duality theorems, interpolation theorems and some fundamental inequalities in harmonic analysis. There are extensive works on the atomic decomposition of Hardy spaces (see \cite{Grafakos2009, Stein1993} and references therein). In the case of several complex variables, Coifman, Rochberg and Weiss (in an unpublished version of \cite{CRW1976}) firstly proved the atomic decomposition theorem for (holomorphic) Hardy spaces on the complex ball when $p=1.$ Subsequently, Garnett and Latter \cite{GL1978} generalized this result to the case $0 < p < 1.$ The associated theorem for strongly pseudoconvex domains in $\mathbb{C}^n$ was proved by Dafni \cite{Dafni1994}, and independently by Krantz and Li \cite{KL1995} wherein the corresponding result for pseudoconvex domains of finite type in $\mathbb{C}^2$ was obtained as well. Furthermore, Grellier and Peloso \cite{GP2002} presented the atomic decomposition theorem for Hardy spaces on convex domains of finite type in $\mathbb{C}^n.$ On the other hand, Coifman and Rochberg \cite{CR1980} obtained decomposition theorems for (weighted) Bergman spaces $\mathcal{A}^p_{\alpha} (\mathbb{B}_n)$ involving `complex-variable' atoms defined by kernel functions in the complex ball for any $0< p<\8$ and $\alpha > -1.$

However, to the best of our knowledge, atomic decomposition (of real-variable type) for Bergman spaces was presented in the literature only in the case $p=1.$ This was given by Coifman and Weiss \cite{CW1977} based on their theory of harmonic analysis on homogenous spaces. Recently, using the duality between Bergman and Bloch spaces, the present authors \cite{CO1} proved an atomic decomposition for Bergman spaces on the complex ball when $p=1,$ in terms of real-variable atoms with respect to Carleson tubes. But the approaches in \cite{CO1, CW1977} are both based on duality and theorefore not constructive and cannot be applied to the case $0< p < 1.$ The aim of this paper is to extend to $0 < p < 1$ these results for $p=1,$ through using an constructive method. Although the strategy behind the proof is in the same spirit as the analogous one of Hardy spaces (e.g. \cite{Dafni1994, GL1978, GP2002, KL1995}), but the technical arguments involved here are different slightly from the ones found there. This is mainly due to the fact that functions in Bergman spaces cannot be identified with those on the boundary of the complex ball, and hence the construction of decomposition must be done in the complex ball. Indeed, our construction is based on a quasi-metric and some sharp estimates about Bergman metric and Bergman kernel functions in the unit ball of $\mathbb{C}^n.$

The paper is organized as follows. In Section \ref{pre} we present preliminaries and in particular introduce local coordinates which reflect the complex structure. In Section \ref{Max} we introduce several maximal functions associated with Bergman spaces, which play a crucial role in the construction of atomic decomposition for Bergman spaces. In Section \ref{result} we define $(p, \8)_{\alpha}$-atoms and state the corresponding atomic decomposition for Bergman spaces. Subsequently, we show that the $(p, \8)_{\alpha}$-atoms so defined are suitable for our purpose. Section \ref{atomicdecomp} is devoted to the construction of the associated atomic decomposition. In section \ref{good-function} we will construct and estimate a collection of smooth bump functions which is crucial in the construction mentioned. Sections \ref{AtomConstruction} and \ref{AtomicdecompPf} are devoted to present the associated construction.

In what follows, $C$ always denotes a constant depending only on $n, \g, p, \alpha$ and $N,$ which may be different in different places. For two nonnegative (possibly
infinite) quantities $X$ and $Y,$ by $X \lesssim Y$ we mean that there exists a positive constant $C$ such that $ X \leq C Y,$ and by $X \thickapprox Y$ that $X \lesssim Y$ and $Y \lesssim X.$ Any notation and terminology not otherwise explained, are as used in \cite{Zhu2005} for spaces of holomorphic functions in the unit ball of $\mathbb{C}^n.$

\section{Preliminaries and notation}\label{pre}

Throughout the paper we fix a positive integer $n \ge 1$ and a parameter $\alpha > -1.$ We denote by $\mathbb{C}^n$ the Euclidean space of complex dimension $n$. For $z=(z_1,\cdots,z_n)$ and $w=(w_1,\cdots,w_n)$ in $\mathbb{C}^n,$ we write
\be
\langle z,w\rangle=z_1\overline{w}_1+\cdots+z_n\overline{w}_n,
\ee
where $\overline{w}_k$ is the complex conjugate of $w_k.$ We also write
\be
|z|=\sqrt{|z_1|^2+\cdots+|z_n|^2}.
\ee
The open unit ball in $\mathbb{C}^n$ is the set
\be
{\mathbb{B}_n} =\{\,z\in {\mathbb{C}^n}: |z|<1\}.
\ee
The boundary of $\mathbb{B}_n$ will be denoted by $\mathbb{S}_n$ and is called the unit sphere in $\mathbb{C}^n,$ i.e.,
\be
{\mathbb{S}_n}=\{\,z\in {\mathbb{C}^n}: |z|=1\}.
\ee

\subsection{Bergman spaces}\label{Bergmanspace}

For $\alpha > -1$ and $p>0$ the (weighted) Bergman space $\mathcal{A}^p_{\alpha}$ consists of holomorphic functions $f$ in $\mathbb{B}_n$ with
$$\|f\|_{p,\,\alpha}=\left ( \int_{\mathbb{B}_n}|f(z)|^pdv_{\alpha}(z) \right )^{\frac{1}{p}}<\infty,$$
where the weighted Lebesgue measure $dv_{\alpha}$ on $\mathbb{B}_n$ is defined by
$$dv_{\alpha}(z)=c_{\alpha}(1-|z|^2)^{\alpha}dv(z)$$
and $c_{\alpha}=\Gamma(n+\alpha+1)/ [n!\Gamma(\alpha+1)]$ is a normalizing constant so that $dv_{\alpha}$ is a probability measure on $\mathbb{B}_n.$ Thus,
\be
\mathcal{A}^p_{\alpha} = \mathcal{H} (\mathbb{B}_n) \cap L^p_{\alpha} (\mathbb{B}_n),
\ee
where $\mathcal{H} (\mathbb{B}_n)$ is the space of all holomorphic functions in $\mathbb{B}_n,$ and $L^p_{\alpha} (\mathbb{B}_n)$ is the usual $L^p$ space on the measure space $(\mathbb{B}_n, d v_{\alpha}).$ When $\alpha =0$ we simply write $\A^p$ for $\A^p_0.$ These are the usual Bergman spaces. Note that for $1 \le p < \8,$ $\mathcal{A}^p_{\alpha}$ is a Banach space under the norm $\|\ \|_{p,\,\alpha}.$ If $0 < p <1,$ the space $\mathcal{A}^p_{\alpha}$ is a quasi-Banach space with $p$-norm $\| f \|^p_{p, \alpha}.$

Recall that $P_{\alpha}$ is the orthogonal projection from $L^2_{\alpha} (\mathbb{B}_n)$ onto ${\mathcal A}^2_{\alpha},$ which can be expressed as
\be
P_{\alpha}f(z)=\int_{\mathbb{B}_n}K^{\alpha}(z,w)f(w)dv_{\alpha}(w), \quad \forall f \in L^1(\mathbb{B}_n,dv_{\alpha}),\; \alpha>-1,
\ee
where
\be
K^{\alpha}(z,w)=\frac{1}{\big(1-\langle z,w\rangle\big)^{n+1+\alpha}},\quad z,w\in\mathbb{B}_n.
\ee
$P_{\alpha}$ extends to a bounded projection from $L^p_{\alpha} (\mathbb{B}_n)$ onto $\mathcal A^p_{\alpha}$ for $1<p<\infty$ (see \cite[Theorem 2.11]{Zhu2005}).

Let $D(z,\gamma)$ denote the Bergman metric ball at $z$
\be
D(z, \gamma) = \{w \in \mathbb{B}_n\;: \; \beta (z, w) < \g \}
\ee
with $\gamma >0,$ where $\beta$ is the Bergman metric on $\mathbb{B}_n.$ It is known that
\be
\beta (z, w) = \frac{1}{2} \log \frac{1 + | \varphi_z (w)|}{1 - | \varphi_z (w)|},\quad z, w \in \mathbb{B}_n,
\ee
whereafter $\varphi _z$ is the bijective holomorphic mapping in $\mathbb{B}_n,$ which satisfies $\varphi _z (0)=z$, $\varphi _z(z)=0$ and
$\varphi _z\circ\varphi _z = id.$

For the sake of convenience, we collect two elementary facts on the Bergman metric and holomorphic functions in the unit ball of $\mathbb{C}^n.$

\begin{lemma}\label{le:BergmanEst01} {\rm (cf. \cite[Lemma 1.24]{Zhu2005})}
For any real $\alpha$ and positive $\g$ there exist constant $C_{\gamma}$ such that
\be
C_{\gamma}^{-1}(1-|z|^2)^{n+1+\alpha}\le v_{\alpha}(D(z,\g))\le C_{\gamma}(1-|z|^2)^{n+1+\alpha}
\ee
for all $z\in\mathbb{B}_n$.
\end{lemma}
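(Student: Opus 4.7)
The plan is to reduce the integral over $D(z,\gamma)$ to an integral over the fixed ball $D(0,\gamma)$ by changing variables via the involution $\varphi_z$, then control all the remaining factors by constants depending only on $\gamma$.

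The starting point is the Möbius invariance of the Bergman metric: because $\varphi_z$ is an isometry of $(\mathbb{B}_n,\beta)$ sending $z\mapsto 0$, we have $D(z,\gamma)=\varphi_z\bigl(D(0,\gamma)\bigr)$, and $D(0,\gamma)$ is simply the Euclidean ball of radius $r=\tanh\gamma$. I would then perform the change of variables $w=\varphi_z(u)$ in the defining integral
\beqn
v_\alpha\bigl(D(z,\gamma)\bigr)=c_\alpha\int_{D(z,\gamma)}(1-|w|^2)^\alpha\,dv(w),
\eeqn
using the two standard identities
\beqn
1-|\varphi_z(u)|^2=\frac{(1-|z|^2)(1-|u|^2)}{|1-\langle z,u\rangle|^2},\qquad
(J_{\mathbb{R}}\varphi_z)(u)=\frac{(1-|z|^2)^{n+1}}{|1-\langle z,u\rangle|^{2(n+1)}}.
\eeqn
After substitution the factor $(1-|z|^2)^{n+1+\alpha}$ pulls out cleanly and one is left with
\beqn
v_\alpha\bigl(D(z,\gamma)\bigr)=c_\alpha(1-|z|^2)^{n+1+\alpha}\int_{|u|<r}\frac{(1-|u|^2)^\alpha}{|1-\langle z,u\rangle|^{2(n+1+\alpha)}}\,dv(u).
\eeqn

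Next I would show that the remaining integral is comparable to a constant depending only on $\gamma$. On $|u|<r$ we have $1-r\le |1-\langle z,u\rangle|\le 1+r$ for every $z\in\mathbb{B}_n$ by the Cauchy–Schwarz inequality, and $0<1-r^2\le 1-|u|^2\le 1$. These bounds depend only on $r=\tanh\gamma$, so the integrand is squeezed between two positive constants depending only on $\gamma$, and integrating over the fixed ball $|u|<r$ yields two-sided bounds of the claimed form, with $C_\gamma$ depending only on $\gamma$, $n$, and $\alpha$.

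There is no real obstacle here; the whole argument is essentially bookkeeping once the transformation formulas for $\varphi_z$ are in hand. The one spot that deserves care is checking that the exponent $\alpha$, which may be negative, does not cause trouble — but since $|u|<r<1$ on the domain of integration, $1-|u|^2$ stays bounded away from $0$, so $(1-|u|^2)^\alpha$ is uniformly bounded above and below for any real $\alpha$. This observation is why the lemma holds for every real $\alpha$ and not just for $\alpha>-1$.
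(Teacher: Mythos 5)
Your proof is correct: the paper itself gives no argument for this lemma but simply cites \cite[Lemma 1.24]{Zhu2005}, and the proof there is exactly your route --- write $D(z,\g)=\varphi_z(D(0,\g))$ with $D(0,\g)$ the Euclidean ball of radius $\tanh\g$, change variables using the identities for $1-|\varphi_z(u)|^2$ and the real Jacobian of $\varphi_z$, and bound $|1-\la z,u\ra|$ and $1-|u|^2$ between constants depending only on $\g$ (and $n,\alpha$). Your closing remark that the integration domain stays away from the boundary, so arbitrary real $\alpha$ causes no trouble, is also the correct reason the statement is not restricted to $\alpha>-1$.
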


\begin{comment}
\begin{lemma}\label{le:BergmanEst02} {\rm (cf. \cite[Lemma 2.20]{Zhu2005})}
For each $\gamma>0,$
\be
1-|a|^2 \approx 1-|z|^2 \approx |1-\langle a,z\rangle|
\ee
for all $a$ in $\mathbb{B}_n$ with $z \in D(a, \gamma).$
\end{lemma}
\end{comment}
\begin{lemma}\label{le:BergmanEst03} {\rm (cf. \cite[Lemma 2.24]{Zhu2005})}
Suppose $\g >0, p>0,$ and $\alpha > -1.$ Then there exists a constant $C>0$ such that for any $f \in \mathcal{H} (\mathbb{B}_n),$
\be
|f(z) |^p \le \frac{C}{v_{\alpha}(D(z,\g))} \int_{D(z,\g)} | f(w)|^p d v_{\alpha} (w), \quad \forall z \in \mathbb{B}_n.
\ee
\end{lemma}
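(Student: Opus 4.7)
The plan is to reduce to the case $z=0$ via the Möbius automorphism $\varphi_z$, then invoke the classical sub-mean value property of plurisubharmonic functions on the Euclidean ball, and finally transfer the estimate back through an explicit change of variables.

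First I would set $g(u) = f(\varphi_z(u))$. Then $g \in \mathcal{H}(\mathbb{B}_n)$ with $g(0) = f(z)$, and since $\varphi_z$ is a Bergman isometry and an involution, it maps $D(0,\gamma)$ bijectively onto $D(z,\gamma)$. Moreover, $D(0,\gamma)$ is exactly the Euclidean ball $\{u \in \mathbb{C}^n : |u| < r\}$ with $r = \tanh\gamma < 1$. Since $|g|^p$ is plurisubharmonic (because $g$ is holomorphic and $p > 0$), the standard sub-mean value inequality on this Euclidean ball, combined with the fact that $(1-|u|^2)^\alpha$ is bounded above and below by constants depending only on $\gamma$ and $\alpha$ for $|u| < r$, yields
\[
|f(z)|^p = |g(0)|^p \le \frac{C}{v_\alpha\bigl(D(0,\gamma)\bigr)} \int_{D(0,\gamma)} |g(u)|^p \, dv_\alpha(u).
\]

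Next I would carry out the change of variables $u = \varphi_z(w)$, making use of the two standard identities
\[
1 - |\varphi_z(w)|^2 = \frac{(1-|z|^2)(1-|w|^2)}{|1 - \langle w,z\rangle|^2}, \qquad J_{\mathbb{R}}\varphi_z(w) = \left(\frac{1-|z|^2}{|1-\langle w,z\rangle|^2}\right)^{n+1},
\]
so that $dv_\alpha(\varphi_z(w))$ equals a constant times $\bigl((1-|z|^2)/|1-\langle w,z\rangle|^2\bigr)^{n+1+\alpha} (1-|w|^2)^\alpha dv(w)$. Converting the latter back into $dv_\alpha(w)$ and using the well-known pointwise comparability on Bergman balls (the content of the commented-out Lemma 2.2), namely $1-|w|^2 \approx 1-|z|^2 \approx |1-\langle w,z\rangle|$ uniformly for $w \in D(z,\gamma)$, the transformed density is comparable to $(1-|z|^2)^{-(n+1+\alpha)}$.

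Combining the two steps, and invoking Lemma \ref{le:BergmanEst01} both to see that $v_\alpha(D(0,\gamma))$ is a constant depending only on $\gamma,\alpha$ and to identify the prefactor $(1-|z|^2)^{n+1+\alpha}$ with $v_\alpha(D(z,\gamma))$, I obtain the desired inequality with a constant depending only on $\gamma, p, \alpha, n$. The main obstacle is the bookkeeping in the change of variables, together with the uniform comparability estimates on the Bergman ball; neither is deep, but they must be invoked explicitly since the relevant companion lemma has been commented out of the excerpt.
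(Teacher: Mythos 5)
Your proof is correct. The paper itself gives no argument for this lemma — it simply cites Lemma 2.24 of Zhu's book — and your reduction to the origin via $\varphi_z$, the sub-mean value property of the plurisubharmonic function $|f\circ\varphi_z|^p$ on the Euclidean ball $D(0,\gamma)=\{|u|<\tanh\gamma\}$, and the change of variables using the identities for $1-|\varphi_z(w)|^2$ and the real Jacobian, together with the comparability $1-|w|^2\approx 1-|z|^2\approx|1-\langle w,z\rangle|$ on $D(z,\gamma)$ and Lemma \ref{le:BergmanEst01}, is exactly the standard proof given in that cited source.
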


\begin{comment}
\begin{lemma}\label{le:BergmanEst04} {\rm (cf. \cite[Lemma 2.27]{Zhu2005})}
For each $\gamma>0,$
\be
|1-\langle z,u\rangle| \approx |1-\langle z,v\rangle|
\ee
for all $z$ in $\bar{\mathbb{B}}_n$ and $u,v$ in $\mathbb{B}_n$ with $\beta(u,v)<\gamma.$
\end{lemma}
\end{comment}

\subsection{Homogeneous spaces}

Recall that a quasimetric on a set $X$ is a map $\rho$ from $X \times X$ to $[0, \8)$ such that
\begin{enumerate}[{\rm (1)}]

\item $\rho (x, y) =0$ if and only if $x =y;$

\item $\rho (x, y) = \rho (y, x);$

\item there exists a positive constant $K \ge 1$ such that
\beq\label{eq:quasi-triangularinequa}
\rho (x, y) \le K [ \rho (x, z) + \rho (z, y)],\quad \forall x,y,z \in X,
\eeq
(the quasi-triangular inequality).

\end{enumerate}
For any $x \in X$ and $r >0,$ the set $B^{\rho} (x,r) = \{ y \in X: \; \rho (x,y) < r \}$ is called a $\rho$-ball of center $x$ and radius $r.$

A space of homogeneous type is a topological space $X$ endowed with a quasi-metric $\rho$ and a Borel measure $\mu$ such that
\begin{enumerate}[{\rm $\bullet$}]

\item for each $x \in X,$ the balls $B(x,r)$ form a basis of open neighborhoods of $x$ and, also, $\mu (B(x,r))>0$ whenever $r >0;$

\item (doubling property) there exists a constant $A>0$ such that for each $x \in X$ and $r>0,$ one has
\beq\label{eq:DoublingInequ}
\mu (B(x,2r)) \le A \mu (B(x,r)).
\eeq

\end{enumerate}
$(X, \rho, \mu)$ is called a space of homogeneous type or simply a homogeneous space. We will usually abusively call $X$ a homogeneous space instead of $(X, \rho, \mu).$ We refer to \cite{CW1977} for details on harmonic analysis on homogeneous spaces.

We now turn to our attention to the unit ball $\mathbb{B}_n,$ in which a quasi-metric $\varrho$ is defined as
\be
\varrho (z, w) = \left \{\begin{split}
& \big | |z| - |w| \big | + \Big | 1 - \frac{1}{|z| |w|} \langle z, w \rangle \Big |,\quad \text{if}\; z, w \in \mathbb{B}_n \backslash \{0\},\\
& |z| + |w|,\quad \text{otherwise}.
\end{split} \right.
\ee
It can be shown that the constant $K=2$ in the quasi-triangular inequality \eqref{eq:quasi-triangularinequa} for $\varrho$ and, $(\mathbb{B}_n, \varrho, d v_{\alpha})$ is a homogeneous space (see e.g. \cite{Bekolle1981, Tch2008}).

The following are some basic properties of the pseudo-metric $\varrho$ which will be used later, whose proofs can be found in \cite[Section 2.2]{Tch2008}.

\begin{lemma}\label{le:Psedu-metric01} {\rm (cf. \cite[Proposition 2.9]{Tch2008})}
For every pseudoball $B^{\varrho} (z, r),$ if $r > 1 -|z|$ then
\be
Q \Big ( \frac{z}{|z|}, r \Big ) \subset B^{\varrho} (z, 2 r) \quad \text{and}\quad B^{\varrho} (z, r) \subset Q \Big ( \frac{z}{|z|}, 4 r \Big ).
\ee
Here, $Q (\zeta, r) = \{ z \in \mathbb{B}_n: \; | 1 - \langle \zeta, z\rangle | < r \}$ for every $\zeta \in \mathbb{S}_n$ and $r >0.$
\end{lemma}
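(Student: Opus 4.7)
My plan is to reduce both inclusions to algebraic manipulations using the explicit formula for $\varrho$, and the key identity $\frac{\langle z,w\rangle}{|z||w|} = \langle \zeta, \eta\rangle$ where $\zeta = z/|z|$ and $\eta = w/|w|$. This gives, for $z, w \in \mathbb{B}_n \setminus \{0\}$,
$$\varrho(z,w) = \big||z|-|w|\big| + |1 - \langle \zeta, \eta\rangle|.$$
The link between the ``Carleson tube'' quantity $|1 - \langle\zeta, w\rangle|$ and the ``spherical'' quantity $|1 - \langle\zeta, \eta\rangle|$ is then captured by
$$1 - \langle\zeta, w\rangle = (1 - |w|) + |w|\bigl(1 - \langle\zeta,\eta\rangle\bigr),$$
which one uses going in each direction. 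I will also repeatedly use the elementary bound $1 - |w| \le |1 - \langle\zeta, w\rangle|$, obtained from $\operatorname{Re}(1 - \langle\zeta, w\rangle) \ge 1 - |w|$ since $|\langle\zeta,w\rangle| \le |w|$.

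For $Q(\zeta, r) \subset B^{\varrho}(z, 2r)$, suppose $|1 - \langle\zeta, w\rangle| < r$. I first deduce $1 - |w| < r$, so that $|w| > 1 - r$; combined with the hypothesis $|z| > 1 - r$ this forces $\bigl||z|-|w|\bigr| < r$. Solving the identity above for $|w|\bigl(1 - \langle\zeta, \eta\rangle\bigr)$ and applying the triangle inequality then bounds $|1 - \langle\zeta, \eta\rangle|$ by a suitable multiple of $r$, which added to $\bigl||z|-|w|\bigr|$ yields $\varrho(z,w) < 2r$. Conversely, for $B^{\varrho}(z, r) \subset Q(\zeta, 4r)$, the hypothesis $\varrho(z,w) < r$ gives both $\bigl||z|-|w|\bigr| < r$ and $|1 - \langle\zeta, \eta\rangle| < r$; plugging these into the identity gives
$$|1 - \langle\zeta, w\rangle| \le (1 - |w|) + |w|\cdot|1 - \langle\zeta,\eta\rangle| < 2r + r = 3r \le 4r,$$
where the bound $1 - |w| < 2r$ follows from $|z| > 1 - r$ and $\bigl||z|-|w|\bigr| < r$.

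The degenerate case $w = 0$ has to be handled separately in both directions, but it only arises when $r$ is large enough that $B^{\varrho}(z,2r)$ or $Q(\zeta,4r)$ essentially exhausts $\mathbb{B}_n$; since $\varrho$ is globally bounded and $Q(\zeta,r)=\mathbb{B}_n$ once $r$ exceeds a universal constant, these cases are trivial. The main obstacle I foresee is tracking the exact constants $2$ and $4$: the crude triangle-inequality bounds naturally produce a factor $1/|w|$ that is only close to $1$ when $r$ is small, so in the intermediate regime one must exploit the hypothesis $r > 1 - |z|$ more carefully. One clean way is to split into the cases $|w| \ge 1/2$ (where the factor $1/|w|$ is harmless) and $|w| < 1/2$ (which, combined with $|z| > 1 - r$, forces $r > 1/2$, reducing the inclusion to a triviality).
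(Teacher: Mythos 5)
Your reduction $\varrho(z,w)=\bigl||z|-|w|\bigr|+|1-\langle\zeta,\eta\rangle|$ (with $\zeta=z/|z|$, $\eta=w/|w|$) and the identity $1-\langle\zeta,w\rangle=(1-|w|)+|w|\bigl(1-\langle\zeta,\eta\rangle\bigr)$ are the right starting point, and your argument for the second inclusion $B^{\varrho}(z,r)\subset Q(\zeta,4r)$ is correct (it even yields $3r$). The genuine gap is in the first inclusion. Solving the identity for $|w|\bigl(1-\langle\zeta,\eta\rangle\bigr)$ and applying the triangle inequality gives only $|1-\langle\zeta,\eta\rangle|<\bigl((1-|w|)+|1-\langle\zeta,w\rangle|\bigr)/|w|<2r/|w|$; since $\bigl||z|-|w|\bigr|$ can simultaneously be arbitrarily close to $r$, this route proves at best $\varrho(z,w)<3r$ even pretending $|w|=1$, and your case $|w|\ge 1/2$ gives only $5r$. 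This is not a matter of bookkeeping: the constant $2$ is sharp. For $n=1$, $\zeta=1$, $z=1-r+\epsilon$ and $w=(1-\epsilon)e^{i\theta}$ with $|1-e^{i\theta}|=r-2\epsilon$, one has $w\in Q(\zeta,r)$ while $\varrho(z,w)=2r-4\epsilon$; so any argument that gives away the term $1-|w|$ linearly cannot reach $2r$. What does work (for small $r$) is a quadratic estimate: writing $t=|w|$, $\lambda=\langle\zeta,\eta\rangle$ and expanding moduli, $|1-t\lambda|^{2}-(1-t)^{2}=2t\,\mathrm{Re}(1-\lambda)-t^{2}\bigl(1-|\lambda|^{2}\bigr)\ge t\,|1-\lambda|^{2}$ (using $t\le1$, $|\lambda|\le1$), hence $|1-\langle\zeta,\eta\rangle|\le\sqrt{\bigl(r^{2}-(1-|w|)^{2}\bigr)/|w|}$; an elementary optimization in $b=1-|w|\in[0,r)$ then gives $\bigl||z|-|w|\bigr|+|1-\langle\zeta,\eta\rangle|<2r$ when, say, $0<r\le 1/2$.

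Your dismissal of the remaining regime ($|w|<1/2$, hence $r>1/2$) as a triviality is not only unjustified but is exactly where the statement, read literally with no upper bound on $r$, fails: take $n=1$, $\zeta=1$, $z=0.99$, $w=-0.15$, $r=1.2$; then $r>1-|z|$ and $|1-\langle\zeta,w\rangle|=1.15<r$, yet $\varrho(z,w)=0.84+2=2.84>2.4=2r$, so $Q(\zeta,r)\not\subset B^{\varrho}(z,2r)$. (Boundedness of $\varrho$ does not help: $\varrho$ can be as large as $3$ while $Q(\zeta,r)=\mathbb{B}_n$ only once $r\ge2$, and in fact one can produce failures already for some $r<1$.) So a correct proof of the first inclusion must carry an explicit smallness restriction on $r$, and neither half of your case split delivers the constant $2$ where the inclusion is true. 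Note also that the paper offers no proof of this lemma, only the citation to \cite[Proposition 2.9]{Tch2008}, and in its one application (Lemma \ref{le:non-tangent}) it uses radii less than $1$ and the more generous constant $4$; a self-contained argument should therefore use the quadratic estimate above, state the admissible range of $r$, and not rely on the linear triangle-inequality bound.
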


\begin{lemma}\label{le:Psedu-metric02} {\rm (cf. \cite[Lemma 2.10]{Tch2008})}
Let $\alpha > -1.$ For $z \in\mathbb{B}_n \setminus\{0\}$ and $0<r<3,$
\be
v_{\alpha}\left(B^{\varrho}(z,r)\right)\approx r^{n+1} [ \mathrm{max}(r,1-|z|) ]^{\alpha},
\ee
where ``$\approx$" depends only on $\alpha$ and $n.$
\end{lemma}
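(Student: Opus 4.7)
My strategy is to split according to whether $r \ge 1-|z|$ (so $\max(r,1-|z|)=r$ and $B^\varrho(z,r)$ reaches the boundary of $\mathbb{B}_n$) or $r < 1-|z|$ (so $\max(r,1-|z|)=1-|z|$ and the pseudoball lies well inside the ball), and to treat each regime separately. In both cases I would pass to polar coordinates $w=\rho\eta$ with $\rho\in[0,1)$ and $\eta\in\mathbb{S}_n$: writing $\zeta=z/|z|$ and using that $\langle z,w\rangle/(|z||w|)=\langle\zeta,\eta\rangle$ when $\rho>0$, the defining inequality of $B^\varrho(z,r)$ decomposes cleanly into a radial part $||z|-\rho|<r$ and a non-isotropic spherical part $|1-\langle\zeta,\eta\rangle|<r$.

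In the \emph{boundary regime} $r\ge 1-|z|$, Lemma \ref{le:Psedu-metric01} sandwiches $B^\varrho(z,r)$ between two Carleson tubes of the form $Q(\zeta,cr)$ (with the inner inclusion valid after a harmless rescaling once $r$ is comparable to $1-|z|$), so the problem reduces to the classical estimate
\be
v_\alpha\bigl(Q(\zeta,s)\bigr) \approx s^{n+1+\alpha},\qquad \zeta\in\mathbb{S}_n,\ 0<s\lesssim 1.
\ee
I would derive this in polar coordinates by combining the inequality $1-|w|\le|1-\langle\zeta,w\rangle|<s$ (which controls the radial part and the factor $(1-|w|^2)^\alpha$) with the standard area estimate $\approx s^n$ for the non-isotropic cap $\{\eta\in\mathbb{S}_n:|1-\langle\zeta,\eta\rangle|<s\}$. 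Since $r\approx \max(r,1-|z|)$ here, this gives the claimed $r^{n+1+\alpha}$.

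In the \emph{interior regime} $r<1-|z|$, the definition of $\varrho$ forces $||z|-|w|| < r < 1-|z|$ for every $w\in B^\varrho(z,r)$, so $1-|w|\approx 1-|z|$ uniformly on the pseudoball. The weight $(1-|w|^2)^\alpha$ therefore factors out as a constant $\approx (1-|z|)^\alpha$, and the problem reduces to showing $v(B^\varrho(z,r))\approx r^{n+1}$. Again in polar coordinates, the condition $\varrho(z,w)<r$ describes the product of a radial interval of length $\approx r$ with a non-isotropic cap on $\mathbb{S}_n$ of surface measure $\approx r^n$, so $v(B^\varrho(z,r))\approx r^{n+1}$ and hence $v_\alpha(B^\varrho(z,r))\approx r^{n+1}(1-|z|)^\alpha$.

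\textbf{Main obstacle.} The technical core of both regimes is the non-isotropic surface measure estimate for spherical caps, which is obtained by $U(n)$-invariance (reducing to $\zeta=e_1$) and a direct computation writing $\eta_1=\xi+i\tau$ and estimating $|1-\eta_1|\approx (1-\xi)+|\tau|$; this is classical. A secondary subtlety is the transition zone $r\approx 1-|z|$, where one must verify that the two regime estimates meet consistently; but this is automatic, since $r^{n+1+\alpha}$ and $r^{n+1}(1-|z|)^\alpha$ differ by a bounded factor precisely when $r\approx 1-|z|$. Both ingredients are implicit in the treatment of \cite{Tch2008}, so the proof amounts to a careful case split and bookkeeping.
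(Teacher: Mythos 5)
The paper does not actually prove this lemma: it is quoted from Tchoundja, and the text only says that the proofs ``can be found in Section 2.2 of \cite{Tch2008}''. So there is no internal argument to compare with; your route -- polar coordinates $w=\rho\eta$, the splitting of $\varrho(z,w)<r$ into a radial interval and a non-isotropic cap, the cap estimate $\sigma\{\eta:|1-\langle\zeta,\eta\rangle|<s\}\approx s^n$, and the Carleson-tube comparison of Lemma \ref{le:Psedu-metric01} together with $v_\alpha(Q(\zeta,s))\approx s^{n+1+\alpha}$ -- is exactly the standard way such measure estimates are obtained, and is in the spirit of Tchoundja's own proof.

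There is, however, a genuine gap in your interior regime, and it sits precisely where the statement itself is delicate. You pass from ``radial interval of length $\approx r$ times cap of measure $\approx r^n$'' to $v(B^{\varrho}(z,r))\approx r^{n+1}$, but in polar coordinates $dv=2n\,\rho^{2n-1}\,d\rho\,d\sigma(\eta)$, and the Jacobian factor $\rho^{2n-1}$ is comparable to $1$ only when $|z|$ (hence $\rho$) is bounded away from $0$. For $0<r\le |z|/2$ and $|z|$ small one has $1-|w|\approx 1$ on the ball, and the correct value is $v_\alpha(B^{\varrho}(z,r))\approx |z|^{2n-1}r^{n+1}$, not $r^{n+1}[\max(r,1-|z|)]^{\alpha}\approx r^{n+1}$: e.g.\ for $n=1$, $\alpha=0$, $z$ real with $|z|=\varepsilon$ and $r=\varepsilon/10$, the pseudoball is essentially $\{\rho e^{i\theta}:\ |\rho-\varepsilon|<r,\ |\theta|\lesssim r\}$, of normalized area $\approx \varepsilon r^{2}\ll r^{2}$, and the discrepancy $|z|^{2n-1}$ is unbounded as $|z|\to 0$. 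So the two-sided bound with constants depending only on $n$ and $\alpha$ cannot be established by your interior argument for all $z\in\mathbb{B}_n\setminus\{0\}$; your proof (and the asserted equivalence) is sound once $|z|$ is bounded below, say $|z|\ge 1/2$, or once $r\gtrsim 1-|z|$, which is the regime in which the lemma is actually invoked, but as written the interior step fails near the origin and you should either add such a restriction or correct the formula there. A second, smaller point: ``$1-|w|\approx 1-|z|$ uniformly on the pseudoball'' is true only for $r\le c(1-|z|)$ with a fixed $c<1$, and the rescaled inner inclusion $Q(\zeta,r/2)\subset B^{\varrho}(z,r)$ from Lemma \ref{le:Psedu-metric01} needs $r>2(1-|z|)$; the intermediate band $c(1-|z|)\le r\le 2(1-|z|)$ should be closed by monotonicity of $r\mapsto v_\alpha(B^{\varrho}(z,r))$ between the two already established endpoint radii, not merely by observing that the two target expressions agree there -- consistency of the formulas does not by itself transfer the estimate to radii where neither argument applies.
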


\begin{lemma}\label{le:Psedu-metric03} {\rm (cf. \cite[Lemma 2.12]{Tch2008})}
For $z \in\mathbb{B}_n$ and $0<r_0<1,$ if $z_0 = (r_0, 0, \ldots,0)$ one has
\begin{enumerate}[$\bullet$]

\item $|1 - r_0 z_1| \ge \frac{1}{3} \varrho (z, z_0);$

\item $|z_1 - r_0| \le \varrho (z, z_0);$

\item $\sum^n_{j=2} |z_j|^2 \le 2 \varrho (z, z_0);$

\item $|1 - \langle z, z_0 \rangle | \le 1 - r^2_0 + \varrho (z, z_0).$

\end{enumerate}
\end{lemma}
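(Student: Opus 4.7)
\textbf{Proof plan for Lemma \ref{le:Psedu-metric03}.}

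The plan is to reduce all four inequalities to one-dimensional scalar manipulations by parametrizing $z$ appropriately. For $z \ne 0$, set $s = |z| \in (0,1)$ and $u = z_1/|z|$, so that $|u| \le 1$, $z_1 = s u$, $\langle z, z_0\rangle = r_0 z_1 = r_0 s u$, and, by the definition of $\varrho$,
\begin{equation*}
\varrho(z, z_0) = |s - r_0| + |1 - u|.
\end{equation*}
The degenerate case $z = 0$ is handled separately with $\varrho(0, z_0) = r_0 < 1$, under which each bullet reduces to a trivial inequality; henceforth I focus on $z \ne 0$.

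Bullets 2--4 are then essentially routine. The second follows by writing $z_1 - r_0 = s(u - 1) + (s - r_0)$, applying the triangle inequality, and using $s \le 1$. The third uses $\sum_{j\ge 2}|z_j|^2 = |z|^2 - |z_1|^2 = s^2(1 - |u|^2) \le (1 - |u|)(1 + |u|) \le 2|1 - u|$ together with $|1 - u| \le \varrho(z,z_0)$. For the fourth, I would decompose $1 - r_0 s u = (1 - r_0 s) + r_0 s (1 - u)$, apply the triangle inequality with $r_0 s \le 1$, and combine with the identity $1 - r_0 s = (1 - r_0^2) + r_0(r_0 - s)$ to get $1 - r_0 s \le (1 - r_0^2) + |s - r_0|$ (using $r_0 \le 1$), which produces the claim.

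The only bullet requiring a short idea is the first. I would combine two lower bounds on $|1 - r_0 s u|$: the reverse triangle inequality applied to $1 - r_0 s u = (1 - u) + u(1 - r_0 s)$ gives
\begin{equation*}
|1 - r_0 s u| \,\ge\, |1 - u| - (1 - r_0 s),
\end{equation*}
while directly $|1 - r_0 s u| \ge 1 - r_0 s \ge |s - r_0|$ (the last inequality holding in either subcase $s \ge r_0$ or $s < r_0$, since $1 - r_0 s - |s - r_0|$ factors nonnegatively). Adding the first inequality to twice the second cancels the sign issue and produces
\begin{equation*}
3 |1 - r_0 s u| \,\ge\, |1 - u| + (1 - r_0 s) \,\ge\, |1 - u| + |s - r_0| = \varrho(z, z_0).
\end{equation*}
The main obstacle is identifying this clean linear combination: the reverse-triangle lower bound can be negative, and the cruder bound $1 - r_0 s$ must be deployed twice to both absorb that sign and supply the missing $|s - r_0|$ contribution with the sharp constant $\tfrac{1}{3}$.
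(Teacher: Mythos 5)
Your proof is correct: the parametrization $s=|z|$, $u=z_1/|z|$ turns $\varrho(z,z_0)$ into $|s-r_0|+|1-u|$, bullets 2--4 follow by the triangle-inequality decompositions you give, and the combination of $|1-r_0su|\ge |1-u|-(1-r_0s)$ with two copies of $|1-r_0su|\ge 1-r_0s\ge |s-r_0|$ yields the constant $\tfrac13$; the case $z=0$ is indeed trivial since $\varrho(0,z_0)=r_0<1$. The paper itself does not prove this lemma but only cites \cite[Lemma 2.12]{Tch2008}, and your elementary verification is a complete and correct substitute for that citation.
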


\subsection{Local coordinates}

For $z \in \mathbb{B}_n$ and $\xi \in \mathbb{C}^n$ a unit vector, we denote by $\tau (z, \xi)$ the distance from $z$ to the boundary $\mathbb{S}_n$ along the complex line determined by $\xi.$ For each $z_0 \in \mathbb{B}_n$ there exists a special set of real coordinate basis $\{ v_1 (z_0), \tau_1 (z_0),  \ldots, v_n (z_0), \tau_n (z_0)\}$ in $\mathbb{R}^{2 n} = \mathbb{C}^n$ defined as follows, which we call $\tau$-{\it extremal}. The first vector
\be
v_1 (z_0) = \left \{\begin{split}
& \frac{z_0}{|z_0|},\quad \text{if}\quad z_0 \not= 0,\\
& \; \mathbf{1}, \quad \; \text{if}\quad z_0 =0,
\end{split}\right.
\ee
where $\mathbf{1} =(1, 0, \ldots, 0).$ $v_1 (z_0)$ is clearly the direction transversal to the boundary $\mathbb{S}_n,$ in the sense that the shortest distance from $z$ to $\mathbb{S}_n$ is attained in the complex line determined by $v_1(z_0).$ The vector $v_2 (z_0)$ is chosen among the vectors orthogonal to $v_1(z_0)$ in such a way that $\tau (z_0, v_2 (z_0))$ is maximal. The vector $v_3(z_0)$ is chosen among the vectors orthogonal to both $v_1 (z_0)$ and $v_2(z_0)$ such that $\tau (z_0, v_3 (z_0))$ is maximal. We repeat this process until we obtain an orthonormal basis $\{v_1 (z_0), \ldots, v_n (z_0) \}$ in $\mathbb{C}^n.$ Put
\be
\tau_j (z_0) = \mathrm{i} v_j (z_0),\quad j=1, \ldots, n.
\ee
Then $\{ v_1 (z_0), \tau_1 (z_0),  \ldots, v_n (z_0), \tau_n (z_0) \}$ is an orthonormal basis in $\mathbb{R}^{2 n}.$ In particular, one has
\be
v_i (\mathbf{0}) = e_{2i-1} \quad \text{and}\quad \tau_i (\mathbf{0})= e_{2i},\quad i =1, \ldots, n,
\ee
where $e_k$ ($k=1,2, \ldots, 2n-1, 2n$) are the standard basis for $\mathbb{R}^{2 n}.$

For $w \in \mathbb{B}_n$ if
\be
w - z_0 = \alpha_1 v_1 (z_0) + \beta_1 \tau_1 (z_0) +  \ldots + \alpha_n v_n (z_0) + \beta_n \tau_n (z_0),
\ee
we denote by $( \alpha_1, \beta_1, \ldots, \alpha_n , \beta_n )$ the real coordinates of $w$ with respect to this basis. Precisely, we define a mapping $\Theta: \mathbb{B}_n \times \mathbb{B}_n \rightarrow \mathbb{R}^{2n}$ such that if
\be
w-z = \alpha_1 v_1 (z) + \beta_1 \tau_1 (z) +  \ldots + \alpha_n v_n (z) + \beta_n \tau_n (z)
\ee
then $\Theta(z,w)=(\alpha_1, \beta_1, \cdots, \alpha_n, \beta_n).$ One can easily verify that this coordinate mapping $\Theta$ is a $C^\infty$ diffeomorphism.

For a multi-index $J=(j_1, j_2, j_3, \cdots, j_{2n})$ in $\mathbb{N} \cup \{ 0 \},$ let
\be
d(J)=j_1+j_2+\frac{j_3}{2}+\cdots+\frac{j_{2n}}{2}
\ee
and $|J|=j_1+\cdots+j_{2n}.$ For any $f\in C^{\infty}(\mathbb{B}_n)$ and $z \in \mathbb{B}_n,$ we define a differential operator
\be
D^J_z f ( w ) = \frac{\partial^{j_1+\cdots+j_{2n}}}{\partial\alpha_1^{j_1}\partial \beta_1^{j_2} \cdots \partial\alpha_n^{j_{2n-1}} \partial \beta_n^{j_{2n}}} f \left( z + \alpha_1 v_1 (z) + \beta_1 \tau_1 ( z ) + \cdots + \alpha_n v_n (z) + \beta_n \tau_n ( z ) \right),
\ee
whenever $\Theta(z,w)=(\alpha_1, \beta_1, \cdots, \alpha_n, \beta_n).$ Let $\Omega$ be a domain in $\mathbb{B}_n,$ we say $f \in \mathcal{C}^{N}(\Omega)$ if for each $z \in \Omega$ and $J$ with $|J| \leq N,$ $D^J_{z}f(w)$ exists and is continuous in a neighborhood of $z.$

\section{Maximal functions}\label{Max}

In order to give a constructive proof of the atomic decomposition for Bergman spaces $\mathcal{A}_{\alpha}^p$ ($0< p \le 1 $), we need to introduce some maximal functions. These maximal functions are variants of the ones used for Hardy spaces (see e.g. \cite{FS1972, GP2002, KL1995}).

\subsection{Non-tangential and tangential maximal functions}

Let $\delta>0$ and $z\in\mathbb{B}_n.$ The \textquoteleft{approach region}' $A_{\delta}(z)$ is defined by
\be
A_{\delta}(z)=\left\{w\in\mathbb{B}_n:\ \varrho(z,w)<\delta(1-|w|)\right\}.
\ee
For any $f \in \mathcal{H} ( \mathbb{B}_n),$ we define respectively the non-tangential maximal function
\be
f_{\delta}^{\star}(z)= \sup_{w\in A_{\delta}(z)}|f(w)|
\ee
and the tangential function
\be
f^{\star\star}_{M}(z)=\sup_{w\in\mathbb{B}_n}\left(\frac{1-|w|}{1-|w|+\varrho(z,w)}\right)^{M}|f(w)|,
\ee
where $M$ is a positive constant.

\begin{lemma}\label{le:non-tangent}
Let $0<p<\infty$ and $\alpha > -1.$ Then
\be
\|f^{\star}_{\delta}\|_{p,\alpha}\lesssim \|f\|_{p,\alpha},\quad \forall f \in \mathcal{A}_{\alpha}^p.
\ee
\end{lemma}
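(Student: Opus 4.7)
The plan is to bound $f^\star_\delta$ pointwise by a Hardy--Littlewood-type maximal function on the homogeneous space $(\mathbb{B}_n, \varrho, dv_\alpha)$, then invoke the $L^q$-boundedness of that maximal operator for a suitable exponent $q>1$. Because holomorphic functions satisfy the sub-mean-value inequality of Lemma \ref{le:BergmanEst03} for \emph{every} positive exponent, we have the flexibility to choose an exponent $p_0 \in (0,p)$, which converts the problem on $L^p$ with $p\le 1$ into one on $L^{p/p_0}$ with $p/p_0>1$, where the standard maximal inequality applies.

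First, fix $0<p_0<p$. By Lemma \ref{le:BergmanEst03}, for every $w \in \mathbb{B}_n$,
\beq\label{eq:submean}
|f(w)|^{p_0} \lesssim \frac{1}{v_\alpha(D(w,\gamma))} \int_{D(w,\gamma)} |f(u)|^{p_0}\, dv_\alpha(u).
\eeq
Next I would show that for any $w \in A_\delta(z)$ the Bergman ball $D(w,\gamma)$ is contained in a pseudoball $B^\varrho(z,r)$ whose $v_\alpha$-measure is comparable to $v_\alpha(D(w,\gamma))$, with $r$ proportional to $1-|w|$. This is the routine but crucial geometric step: for $u \in D(w,\gamma)$ one has $1-|u|^2 \approx 1-|w|^2$ and $|1-\langle u,w\rangle| \lesssim 1-|w|^2$, and a short calculation from the definition of $\varrho$ yields $\varrho(u,w) \lesssim 1-|w|$; combined with $\varrho(z,w) < \delta(1-|w|)$ and the quasi-triangle inequality, this gives $\varrho(z,u) \lesssim 1-|w|$. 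Moreover $||z|-|w|| \le \varrho(z,w) < \delta(1-|w|)$ forces $1-|z| \approx 1-|w|$ (for $w$ near the boundary, which is the only case that matters), so Lemmas \ref{le:BergmanEst01} and \ref{le:Psedu-metric02} yield
\beq
v_\alpha(D(w,\gamma)) \approx (1-|w|)^{n+1+\alpha} \approx v_\alpha\bigl(B^\varrho(z, r)\bigr),\qquad r \approx 1-|w|.
\eeq

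Combining this with \eqref{eq:submean} and taking the supremum over $w \in A_\delta(z)$, I obtain
\beq
\bigl(f^\star_\delta(z)\bigr)^{p_0} \lesssim \sup_{r>0} \frac{1}{v_\alpha(B^\varrho(z,r))} \int_{B^\varrho(z,r)} |f(u)|^{p_0}\, dv_\alpha(u) = M_\varrho\bigl(|f|^{p_0}\bigr)(z),
\eeq
where $M_\varrho$ is the centered Hardy--Littlewood maximal operator on the homogeneous space $(\mathbb{B}_n, \varrho, dv_\alpha)$. Since $p/p_0 > 1$, the Coifman--Weiss maximal theorem gives $\|M_\varrho g\|_{L^{p/p_0}(dv_\alpha)} \lesssim \|g\|_{L^{p/p_0}(dv_\alpha)}$, so
\beq
\int_{\mathbb{B}_n} (f^\star_\delta)^p\, dv_\alpha \lesssim \int_{\mathbb{B}_n} \bigl(M_\varrho(|f|^{p_0})\bigr)^{p/p_0}\, dv_\alpha \lesssim \int_{\mathbb{B}_n} |f|^p\, dv_\alpha,
\eeq
which is the desired inequality. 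The only genuine obstacle is the geometric comparison between the Bergman ball $D(w,\gamma)$ and the pseudoball $B^\varrho(z, C(1-|w|))$; all the remaining steps are standard once that containment and the matching volume estimate are in hand.
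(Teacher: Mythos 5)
Your argument is essentially the paper's proof: the paper likewise dominates $f^{\star}_{\delta}$ pointwise by the centered Hardy--Littlewood maximal function of $|f|^{p_0}$ on the homogeneous space $(\mathbb{B}_n,\varrho,dv_{\alpha})$ (with the specific choice $p_0=p/2$, so that $L^2$-boundedness of $\mathcal{M}$ suffices), obtaining the containment $D(w,\gamma)\subset B^{\varrho}(z,C(1-|w|))$ via Zhu's Lemma 5.23 together with Lemma \ref{le:Psedu-metric01} instead of your direct computation with $\varrho$. The one point you gloss over --- the supremum over $w\in A_{\delta}(z)$ with $|w|$ bounded away from $1$, where your estimate $\varrho(u,w)\lesssim 1-|w|$ breaks down because it needs $|w|$ (hence $|u|$) bounded away from $0$ --- is handled in the paper by the separate crude bound $\sup_{|w|\le 1-\sigma}|f(w)|\lesssim\|f\|_{p,\alpha}$, a standard step you should include.
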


\begin{proof}
Fix $\g >0.$ By \cite[Lemma 5.23]{Zhu2005}, there exists $\sigma \in(0,1)$ (depending only on $\g$) such that
\be
D(w, \g)\subset Q(\zeta,r^2)
\ee
for all $0<r<1$ and $\zeta\in\mathbb{S}_n,$ where
\be
w=(1-\sigma r^2)\zeta,\ \  Q(\zeta,r^2)=\{z\in\mathbb{B}_n:\ |1-\langle\zeta,z\rangle|<r^2\}.
\ee
Note that $1-|w|<r^2,$ by Lemma \ref{le:Psedu-metric01} we have
\be
Q(\zeta,r^2)=Q \Big ( \frac{w}{|w|},r^2 \Big ) \subset B^{\varrho} ( w, 4r^2 ) = B^{\varrho} \Big ( w,\frac{4}{\sigma}(1-|w|) \Big ).
\ee
Thus, for all $1-\sigma<|w|<1$ we obtain
\be
D(w, \g)\subset B^{\varrho} \Big ( w,\frac{4}{\sigma}(1-|w|) \Big ).
\ee
Let $z \in \mathbb{B}_n,$ $w \in A_{\delta}(z),$ and $1-\sigma<|w|<1.$ Then
\be\begin{split}
1-|z| = & 1-|z|-(1-|w|)+(1-|w|)\\
\leq & \varrho(z,w)+(1-|w|) \leq (\delta+1)(1-|w|)
\end{split}\ee
and so,
\be
B^{\varrho} \Big ( w,\frac{4}{\sigma}(1-|w|) \Big ) \subset B^{\varrho} \Big ( z, 2 \big [ \delta + \frac{4}{\sigma} \big ] (1-|w|) \Big ).
\ee
Moreover, by Lemma \ref{le:Psedu-metric02}
\be
v_{\alpha} \left ( B^{\varrho} \Big (z, 2 \big [ \delta+\frac{4}{\sigma} \big ] (1-|w|) \Big ) \right ) \approx \left ( \big [ \delta + \frac{4}{\sigma} \big ] (1-|w|) \right )^{n+1+\alpha}.
\ee

Now, by Lemmas \ref{le:BergmanEst01} and \ref{le:BergmanEst03} we have, for all $w \in A_{\delta}(z)$ and $1-\sigma<|w|<1,$
\be\begin{split}
|f(w)|^{\frac{p}{2}}&\leq\frac{C_{p,\alpha, \g}}{(1-|w|^2)^{n+1+\alpha}}\int_{D(w,\g)}|f(z)|^{\frac{p}{2}}dv_{\alpha}(z)\\
& \le \frac{C_{p,\alpha, \g}}{(1-|w|^2)^{n+1+\alpha}} \int_{B^{\varrho} \big ( w,\frac{4}{\sigma}(1-|w|) \big )} |f(z)|^{\frac{p}{2}} d v_{\alpha}(z)\\
& \lesssim \frac{C_{p,\alpha, \g} [ \delta+\frac{4}{\sigma} ]^{n+1+\alpha}}{ v_{\alpha} (B^{\varrho}(z, 2 [ \delta+\frac{4}{\sigma}] (1-|w|)))}
\int_{B^{\varrho} \big ( z, 2 (\delta+\frac{4}{\sigma})(1-|w|) \big )}|f(z)|^{\frac{p}{2}}dv_{\alpha}(z)\\
& \lesssim C_{p,\alpha, \g, \delta, \sigma} \mathcal{M}(|f|^{\frac{p}{2}})(z),
\end{split}\ee
where $\mathcal{M}$ is the central Hardy-Littlewood maximal operator defined by
\be
\mathcal{M}(f)(z)=\sup_{r>0}\frac{1}{v_{\alpha}(B^{\varrho}(z,r))}\int_{B^{\varrho}(z,r)}|f(w)|dv_{\alpha}(w).
\ee
By Lemma \ref{le:BergmanEst03} again, it follows that
\be\begin{split}
f_{\delta}^{\star}(z) & \le \left( \sup_{w \in \mathcal{A}_{\delta}(z), \; |w|>1-\sigma } |f(w)|^{\frac{p}{2}} \right)^{\frac{2}{p}}+\sup_{|w|\leq1-\sigma}|f(w)|\\
& \leq C_{p,\alpha,\g,\delta,\sigma} \big ( [ \mathcal{M}(|f|^{\frac{p}{2}}) (z) ]^{\frac{2}{p}} + \|f\|_{p,\alpha} \big ),
\end{split}\ee
and hence,
\be\begin{split}
\int_{\mathbb{B}_n}|f_{\delta}^{\star}(z)|^{p}d v_{\alpha} (z) & \lesssim \int_{\mathbb{B}_n } [(\mathcal{M}( |f|^{\frac{p}{2}}) (z) ]^{2} d v_{\alpha} + \|f\|_{p,\alpha}^p \lesssim\|f\|_{p,\alpha}^p,
\end{split}\ee
the last inequality is obtained by the fact that $\mathcal{M}$ is a bounded operator on $L^{q}(\mathbb{B}_n, dv_{\alpha})$ for any $1<q\le\infty$ (see \cite[Theorem 3.5]{CW1977}). This completes the proof.
\end{proof}

\begin{lemma}\label{le:tangent}
Let $0<p<\infty$ and $\alpha > -1.$ If $M$ is a constant such that $Mp > n + 1 + \alpha,$ then
\be
\|f^{\star\star}_{M}\|_{p,\alpha} \lesssim \|f\|_{p,\alpha}, \quad \forall f \in \mathcal{A}_{\alpha}^p.
\ee
\end{lemma}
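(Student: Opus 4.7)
The plan is to dyadically decompose the supremum defining $f^{\star\star}_M(z)$ according to the scale $\varrho(z,w)/(1-|w|)$, dominate each piece by a non-tangential maximal function of larger aperture, and then invoke a quantitative version of Lemma \ref{le:non-tangent} in which the dependence of the constant on $\delta$ is made explicit.

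For $k \geq 0$ put $R_k(z) = A_{2^k}(z) \setminus A_{2^{k-1}}(z)$, with the convention $A_{2^{-1}}(z) = \emptyset$. On $R_k(z)$ one has $\varrho(z,w) \thickapprox 2^k(1-|w|)$, hence $(1-|w|)/(1-|w|+\varrho(z,w)) \thickapprox 2^{-k}$, which yields the pointwise estimate
$$f^{\star\star}_M(z) \lesssim \sum_{k=0}^\infty 2^{-kM} f^\star_{2^k}(z), \qquad z \in \mathbb{B}_n.$$
Since $Mp > n+1+\alpha$, we can fix $q \in (0,p)$ with $Mq > n+1+\alpha$.

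Repeating the chain of inequalities in the proof of Lemma \ref{le:non-tangent} with exponent $q$ in place of $p/2$, and keeping explicit track of the $\delta$-dependence of each constant, yields
$$(f^\star_\delta(z))^q \lesssim (\delta+C)^{n+1+\alpha}\bigl(\mathcal{M}(|f|^q)(z) + \|f\|^q_{p,\alpha}\bigr), \qquad \delta \geq 1,$$
where the extra $\|f\|^q_{p,\alpha}$ absorbs the contribution from $\{|w| \leq 1-\sigma\}$ (on which $|f|$ is pointwise controlled by $\|f\|_{p,\alpha}$). Because $p/q > 1$, $\mathcal{M}$ is bounded on $L^{p/q}(\mathbb{B}_n, dv_\alpha)$ by \cite[Theorem 3.5]{CW1977}, and integrating gives
$$\|f^\star_\delta\|_{p,\alpha} \lesssim (\delta+C)^{(n+1+\alpha)/q}\,\|f\|_{p,\alpha}.$$

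Writing $s = \min(p,1)$ and combining the previous displays via the triangle inequality (when $p \geq 1$) or $p$-subadditivity of $\|\cdot\|^p_{p,\alpha}$ (when $0 < p < 1$) yields
$$\|f^{\star\star}_M\|_{p,\alpha}^s \lesssim \|f\|_{p,\alpha}^s \sum_{k=0}^\infty 2^{-kMs}(2^k+C)^{(n+1+\alpha)s/q},$$
and the geometric series converges precisely because $Mq > n+1+\alpha$. The main obstacle is the quantitative step: one has to revisit the proof of Lemma \ref{le:non-tangent} with the lower exponent $q < p$ (rather than $p/2$) and extract the explicit polynomial dependence on the aperture parameter $\delta$; once this refined estimate is in hand, the remaining dyadic summation is routine under the sharp hypothesis $Mp > n+1+\alpha$.
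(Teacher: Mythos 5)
Your proposal is correct, and it follows the paper's skeleton only in part: the dyadic decomposition of $f^{\star\star}_M$ into apertures, $f^{\star\star}_M(z)\lesssim\sum_k 2^{-kM}f^{\star}_{2^k}(z)$, is exactly the paper's final step, but the crucial quantitative aperture estimate is obtained by a genuinely different mechanism. The paper never reopens Lemma \ref{le:non-tangent}; instead it compares $f^{\star}_{N}$ with the aperture-one function $f^{\star}_1$ through a level-set (good-$\lambda$ type) argument: for $E_{\beta}=\{f^{\star}_1>\beta\}$ it shows that $z\notin E^{\star}_{\beta,N}:=\{\mathcal{M}(\chi_{E_{\beta}})>c_{\alpha}/(2C_{\alpha}(N+1)^{n+1+\alpha})\}$ forces $f^{\star}_N(z)\le\beta$, and the weak $(1,1)$ bound for $\mathcal{M}$ then gives $\int (f^{\star}_N)^p\,dv_{\alpha}\lesssim (N+1)^{n+1+\alpha}\int (f^{\star}_1)^p\,dv_{\alpha}$, i.e.\ the sharp aperture growth $(N+1)^{n+1+\alpha}$ on the $p$-th power, uniformly in $p$, after which Lemma \ref{le:non-tangent} is used only as a black box. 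You instead re-run the subharmonicity chain of Lemma \ref{le:non-tangent} with an auxiliary exponent $q<p$ chosen so that $Mq>n+1+\alpha$, extracting the explicit dependence $(\delta+C)^{n+1+\alpha}$ in the pointwise bound and hence $(\delta+C)^{(n+1+\alpha)/q}$ on $\|f^{\star}_{\delta}\|_{p,\alpha}$; this is lossier than the paper's exponent $(N+1)^{(n+1+\alpha)/p}$, but the loss is harmless precisely because the hypothesis $Mp>n+1+\alpha$ is an open condition, so your dyadic series still converges. What each route buys: the paper's argument is modular (it needs only $\|f^{\star}_1\|_{p,\alpha}\lesssim\|f\|_{p,\alpha}$ plus the weak-type bound for $\mathcal{M}$, and no auxiliary exponent), and yields the cleaner aperture dependence; yours avoids the level-set machinery altogether at the price of reproving Lemma \ref{le:non-tangent} in quantitative form and of choosing $q$. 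One small point to make explicit if you write this up: in tracking the $\delta$-dependence, the radius $r=2[\delta+4/\sigma](1-|w|)$ may exceed $3$, where Lemma \ref{le:Psedu-metric02} no longer applies verbatim; the needed upper bound $v_{\alpha}(B^{\varrho}(z,r))\lesssim r^{n+1+\alpha}$ is then trivial since $v_{\alpha}$ is a probability measure, so the polynomial dependence $(\delta+C)^{n+1+\alpha}$ indeed persists for all $\delta\ge 1$.
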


\begin{proof}
By Lemma \ref{le:Psedu-metric02}, there exist two constants $c_{\alpha}$ and $C_{\alpha}$ such that
\be
c_{\alpha} r^{n+1}\mathrm{max}\{r,1-|z|\}^{\alpha} \le v_{\alpha}(B^{\varrho}(z,r)) \le C_{\alpha} r^{n+1}\mathrm{max}\{r,1-|z|\}^{\alpha}
\ee
for all $z \in \mathbb{B}_n$ and $0<r<3.$ Let $0<\beta<1$ and $N \in \mathbb{N}.$ We define the following two sets:
\be
E_{\beta} = \{ z \in \mathbb{B}_n: \; f^{\star}(z) \triangleq f^{\star}_1 ( z) > \beta \}
\ee
and
\be
E_{\beta, N}^{\star}=\left\{z\in\mathbb{B}_n:\; \mathcal{M}(\chi_{E_{\beta}})(z)> \frac{c_{\alpha}}{ 2 C_{\alpha} (N+1)^{n+1+\alpha}}\right\}
\ee
where $\chi_E$ is the characteristic function on $E.$ % and the maximal operator $\mathcal{M_{\rho}}$ is defined by
%\be
%\mathcal{M_{\rho}}(f)(z)=\sup_{r>0}\frac{1}{v_{\alpha}(B^{\varrho}(z,\rho r))}\int_{B^{\varrho}(z,r)}|f(w)|dv_{\alpha}(w).
%\ee
Since $\mathcal{M}$ is bounded form $L^1(\mathbb{B}_n,dv_{\alpha})$ to $L^{1,\infty}(\mathbb{B}_n,dv_{\alpha})$ (see \cite[Theorem 3.5]{CW1977}), we have
\be
v_{\alpha}(E_{\beta, N}^{\star})\lesssim (N+1)^{n+1+\alpha} v_{\alpha}(E_{\beta}).
\ee

On the other hand, if $z$ is not in $E_{\beta, N}^{\star},$ we claim that $f^{\star}_{N}(z)\leq\beta.$ In fact, for any $w \in A_{N}(z),$ the set $B^{\varrho}(w,1-|w|)$ can not be contained in $E_{\beta}.$ Otherwise,
\be\begin{split}
\mathcal{M_{\rho}}(\chi_{E_{\beta}})(z)& \geq\frac{\int_{B^{\varrho}(z,(N+1)(1-|w|))}\chi_{E_{\beta}}dv_{\alpha}}{v_{\alpha}
\big(B^{\varrho}(z, (N+1)(1-|w|))\big)}\\
&\geq\frac{\int_{B^{\varrho}(w,(1-|w|))}\chi_{E_{\beta}}dv_{\alpha}}{v_{\alpha}
\big(B^{\varrho}(z, (N+1)(1-|w|))\big)}\\
&\geq\frac{v_{\alpha}(B^{\varrho}(w,1-|w|))}{v_{\alpha}\big(B^{\varrho}(z, (N+1)(1-|w|))\big)}\\
&\geq\frac{c_{\alpha}(1-|w|)^{n+1+\alpha}}{C_{\alpha}\big( (N+1)(1-|w|)\big)^{n+1+\alpha}}\\
&>\frac{c_{\alpha}}{2C_{\alpha} (N+1)^{n+1+\alpha}},
\end{split}\ee
since  $1-|z|<(N+1)(1-|w|).$ This implies $z\in E^{\star}_{\beta, N}$ which contradicts our assumption. Hence, there exists $\xi\in B^{\varrho}(w,1-|w|)$ such that $f^{\star}(\xi)\leq\beta$ and so $|f(w)|\leq\beta.$ Thus $f^{\star}_{N}(z)\leq\beta.$ Then we have,
\be\begin{split}
\int_{\mathbb{B}_n}|f^{\star}_{N}(z)|^{p}dv_{\alpha}(z)&=\int_{\mathbb{B}_n}|f^{\star}_{N}(z)|^{p}dv_{\alpha}(z)\\
& = p \int_{0}^{1}\beta^{p-1}v_{\alpha}\{z\in\mathbb{B}_n :\; |f^{\star}_{N}(z)|>\beta\}d\beta\\
& \le p \int_{0}^{1}\beta^{p-1}v_{\alpha}(E_{\beta, N}^{\star})d\beta\\
& \lesssim  (N+1)^{n+1+\alpha}p\int_{0}^{1}\beta^{p-1}v_{\alpha}(E_{\beta})d\beta\\
& \lesssim  (N+1)^{n+1+\alpha} \int_{\mathbb{B}_n} |f^{\star}(z)|^p d v_{\alpha}(z).
\end{split}\ee

Now, if $\varrho(z,w)<1-|w|$ then
\be
\left(\frac{1-|w|}{1-|w|+\varrho(z,w)}\right)^{M}|f(w)|\leq f^{\star}(z),
\ee
and if $2^{k}(1-|w|)\leq\varrho(z,w)<2^{k+1}(1-|w|)$ for $k=0,1,2,\cdots,$
\be
\left(\frac{1-|w|}{1-|w|+\varrho(z,w)}\right)^{M}|f(w)|< \frac{1}{2^{kM}}f^{\star}_{2^{k+1}}(z).
\ee
Thus, by Lemma \ref{le:non-tangent},
\be\begin{split}
\int_{\mathbb{B}_n}|f^{\star\star}_{M}(z)|^pdv_{\alpha}(z)
& \leq \sum_{k=0}^{\infty} \int_{\mathbb{B}_n} \left | \frac{1}{2^{kM}}f^{\star}_{2^{k+1}}(z) \right |^p d v_{\alpha}(z)\\
& \leq \sum_{k=0}^{\infty}\frac{1}{2^{kMp}}\int_{\mathbb{B}_n}|f^{\star}_{2^{k+1}}(z)|^pdv_{\alpha}(z)\\
& \lesssim \sum_{k=0}^{\infty}  2^{k(n+1+\alpha-Mp)} \int_{\mathbb{B}_n} |f^{\star}(z)|^p dv_{\alpha}(z)\\
& \lesssim  \|f\|_{p,\alpha}^{p}\sum_{k=0}^{\infty}2^{k(n+1+\alpha-Mp)}\\
&\lesssim\|f\|_{p,\alpha}^{p}.
\end{split}\ee
The proof of Lemma \ref{le:tangent} is finished.
\end{proof}

\subsection{Grand maximal functions}

We next introduce the definition of so-called grand maximal functions, which will play an important role in the construction of atomic decomposition. To this end, we need to define a space of smooth bump functions. Let $\delta >0$ and let $L \ge 0$ be an integer. Given $z \in \mathbb{B}_n,$ we denote by $\mathcal{G}^{L}_{\delta}(z)$ the space of smooth bump functions at $z$ for $\delta$ and $L,$ that consists of all functions $g\in C^{\infty}(\mathbb{B}_n)$ for which there exist $z_{0}\in\mathbb{B}_n$ and $r_0 >0$ such that
\be
\mathrm{supp} g \subset B^{\varrho}(z_{0},r_{0}),\quad \varrho(z,z_{0})<\delta r_{0}, \quad \text{and}\; \|g\|_{L,z_{0},r_{0}}\leq 1,
\ee
where
\be
\|g\|_{L,z_{0},r_{0}}=v_{\alpha}(B^{\varrho}(z_0,r_0)) \sup_{|J|\le L}r_{0}^{d(J)} \|D^{J}_{z_0}g\|_{L^{\infty}(B^{\varrho}(z_0,r_0))}.
\ee

The grand maximal function on $\mathbb{B}_n$ is defined as
\be
\mathcal{K}_{\delta,L}(f)(z)=\sup_{g\in\mathcal{G}^{L}_{\delta}(z)}\left|\int_{\mathbb{B}_n}f(w)g(w)dv_{\alpha}(w)\right|.
\ee

\begin{lemma}\label{le:grand}
Let $0<p<\infty$ and $\delta >0.$ Let $L>M$ be an integer, where $M$ is a constant such that $Mp > n + 1 + \alpha.$ Then
\be
\mathcal{K}_{\delta,L}(f) (z) \lesssim f_{3+2\delta}^{\star}(z) + f^{\star\star}_{M}(z),\quad \forall f \in \mathcal{H} (\mathbb{B}_n),
\ee
for all $z\in\mathbb{B}_n.$ Consequently,
\be
\|\mathcal{K}_{\delta,L}(f)\|_{L^p (\mathbb{B}_n,dv_{\alpha})}\lesssim\|f\|_{p,\alpha},\; \forall f \in \mathcal{A}_{\alpha}^p.
\ee
\end{lemma}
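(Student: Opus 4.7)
The $L^p$ bound is a formal consequence of the pointwise inequality via Lemmas \ref{le:non-tangent} and \ref{le:tangent} (the latter is exactly where the hypothesis $Mp>n+1+\alpha$ is used), so the task reduces to proving
\[
\mathcal{K}_{\delta,L}(f)(z)\lesssim f^{\star}_{3+2\delta}(z)+f^{\star\star}_M(z)
\]
for every $z\in\mathbb{B}_n$. Fix such a $z$ and let $g\in\mathcal{G}^L_\delta(z)$ be supported in $B^\varrho(z_0,r_0)$ with $\varrho(z,z_0)<\delta r_0$; taking $J=0$ in the bump-norm inequality gives $\|g\|_\infty\le 1/v_\alpha(B^\varrho(z_0,r_0))$, and the quasi-triangle inequality (with constant $K=2$) yields $\varrho(z,w)\le 2(1+\delta)r_0$ for every $w\in\mathrm{supp}\,g$. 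The argument I would run splits according to whether $r_0\le(1-|z_0|)/2$ (the bump is essentially interior) or $r_0>(1-|z_0|)/2$ (the bump may reach the boundary).

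In the interior case, for every $w\in B^\varrho(z_0,r_0)$ one has $1-|w|\ge 1-|z_0|-r_0\ge r_0$, so
\[
\varrho(z,w)\le 2(1+\delta)r_0\le 2(1+\delta)(1-|w|)\le(3+2\delta)(1-|w|),
\]
which shows $B^\varrho(z_0,r_0)\subset A_{3+2\delta}(z)$. Combining this with $\|g\|_\infty\le 1/v_\alpha(B^\varrho(z_0,r_0))$ gives immediately $|\int f g\,dv_\alpha|\le\sup_{B^\varrho(z_0,r_0)}|f|\le f^\star_{3+2\delta}(z)$.

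In the boundary case the only pointwise control on $|f|$ available throughout the support is the tangential estimate $|f(w)|\le f^{\star\star}_M(z)\big((1-|w|+\varrho(z,w))/(1-|w|)\big)^M$, and the main obstacle is a divergence in the naive $L^\infty\times L^1$ bound: by Lemma \ref{le:Psedu-metric02} the dyadic shell $B^\varrho(z_0,r_0)\cap\{2^{-k-1}r_0\le 1-|w|<2^{-k}r_0\}$ has $v_\alpha$-measure $\lesssim 2^{-k(1+\alpha)}r_0^{n+1+\alpha}$ while the tangential weight there is $\asymp 2^{kM}$, so the sum $\sum_k 2^{k(M-1-\alpha)}$ diverges precisely when $M>1+\alpha$. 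The strict inequality $L>M$ is what closes this gap: one would exploit the anisotropic derivative bound $\|D^J_{z_0}g\|_\infty\le r_0^{-d(J)}/v_\alpha(B^\varrho(z_0,r_0))$ in the normal direction $v_1(z_0)$, perform $L$-fold integration by parts along the coordinate $\alpha_1$ (no boundary contributions since $\mathrm{supp}\,g\subset\mathbb{B}_n$), and use the Cauchy-type estimate $|\partial^L_{\alpha_1}f(w)|\lesssim(1-|w|)^{-L}\sup_{B(w,(1-|w|)/2)}|f|$ (valid because holomorphy identifies the real derivative $\partial_{\alpha_1}$ with the complex directional derivative $v_1(z_0)\cdot\nabla$) to recover an extra factor of $((1-|w|)/r_0)^{L-M}$ on each shell. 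With $L>M$ the shifted dyadic sum converges and yields $|\int f g\,dv_\alpha|\lesssim f^{\star\star}_M(z)$; combining the two cases produces the desired pointwise inequality, after which Lemmas \ref{le:non-tangent}--\ref{le:tangent} finish the proof.
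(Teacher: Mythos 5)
Your reduction of the $L^p$ bound to the pointwise bound via Lemmas \ref{le:non-tangent} and \ref{le:tangent} is fine, and so is your ``interior'' case: when $r_0\le(1-|z_0|)/2$ one indeed has $B^{\varrho}(z_0,r_0)\subset A_{3+2\delta}(z)$ and the trivial $L^\infty\times L^1$ bound gives $f^{\star}_{3+2\delta}(z)$. The problem is that the boundary case, which is the entire content of the lemma, is only gestured at, and the gesture does not close. First, the mechanism is internally inconsistent: in a single integration by parts the $L$ derivatives land either on $g$ (costing $r_0^{-L}$ via the bump norm) or on $f$ (estimated by Cauchy, costing $(1-|w|)^{-L}$); you invoke both bounds simultaneously, and no identity is written that produces both factors. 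Second, integrating by parts in $w$ against $dv_\alpha$ differentiates the weight $(1-|w|^2)^{\alpha}$ in the normal direction, producing factors of order $(1-|w|)^{-1}\gg r_0^{-1}$ on exactly the shells that cause the divergence; these terms are nowhere accounted for. (The paper's proof is organized in polar coordinates precisely to dodge this: the weight $(1-r^2)^{\alpha}$ depends only on the outer radius, the radial pullback $f(r\xi)\mapsto f((1-r_1)r\xi)$ produces the $f^{\star}_{3+2\delta}$ term, and Green's formula at fixed $r$, using $\Delta f=0$, moves derivatives onto the extension $G_r$ without ever touching the weight.)

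Third, and decisively, even granting your mechanism, the quantitative gain you claim is too weak. With an extra factor $\bigl((1-|w|)/r_0\bigr)^{L-M}$ on the shell $1-|w|\approx 2^{-k}r_0$, your dyadic sum becomes $\sum_k 2^{k(M-1-\alpha)}\cdot 2^{-k(L-M)}=\sum_k 2^{k(2M-1-\alpha-L)}$, which converges only if $L>2M-1-\alpha$; the hypothesis $L>M$ does not give this (take $n=1$, $\alpha=0$, $p=1/50$, so $M=101$ is admissible, and $L=102<2M-1$). To close shell by shell you would need a gain with exponent strictly larger than $M-1-\alpha$, i.e.\ essentially $\bigl((1-|w|)/r_0\bigr)^{L}$, and your sketch does not produce it. The reason $L>M$ suffices in the paper's argument is structurally different: after $\ell=L$ radial pullbacks the tangential weight is evaluated at the pullback depth $r_\ell$, giving $(r_0/r_\ell)^{M}$, and the $L$-fold iterated radial integrals $\int_0^{r_0}\int_{r_1}^{r_0}\cdots\int_{r_{\ell-1}}^{r_0} r_\ell^{-M}\,dr_\ell\cdots dr_1\lesssim r_0^{L-M}$ converge precisely when $L>M$; no pointwise-in-$w$ shell estimate of the kind you propose is ever needed. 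As it stands, then, the main case of the lemma is not proved: you need either to reproduce an iterated pullback/integration-by-parts scheme in which every derivative placed on $f$ is subsequently traded away (with the weight handled as in the paper), or to strengthen the claimed gain to an exponent beating $M-1-\alpha$.
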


\begin{proof}
Let $w=(x_1+\mathrm{i}y_1,\cdots,x_n+\mathrm{i}y_n).$ We define a differential operator $Y_{\el}$ of order $\el$ by
\be
Y_{\el}=\sum_{\mbox{\tiny$\begin{array}{c}k_1 + \cdots + k_n + \\ m_1 + \cdots + m_n = \el \end{array}$}}C_{k_1,\cdots,k_n,m_1,\cdots,m_n} ( w )
\frac{\partial^{k_1 + \cdots + k_n + m_1 + \cdots + m_n}}{\partial x_1^{k_{1}}\partial y_{1}^{m_1}\cdots \partial x_{n}^{k_{n}}\partial y_{n}^{m_n}}
\ee
with smooth coefficients $C_{k_1,\cdots, k_n, m_1, \cdots, m_n}.$ The symbol $Y_{\el}$ will denote different operators in different contexts. Then,
\beq\label{eq:deviative-inequality}
\|Y_{\el}g\|_{L^{\infty}(B^{\varrho}(z_0,r_0))}\lesssim\sum_{|J|=\el}\frac{1}{r_{0}^{d(J)}v_{\alpha}(B^{\varrho}(z_{0},r_{0}))}, \ \ \ \forall \el \leq L,
\eeq
for all $g\in\mathcal{G}^{L}_{\delta}(z).$ This can be obtained by the chain rule and change of coordinates. For example, let
\be
w-z_0 = \alpha_1 v_1 ( z_0 ) + \beta_1 \tau_1 ( z_0 ) + \cdots + \alpha_n v_n ( z_0 ) + \beta_n \tau_n ( z_0 ),
\ee
by orthornormality, we have
\be
\alpha_j =(w-z_{0})\bullet v_j ( z_0 )\quad \text{and}\quad \beta_j = (w-z_{0})\bullet \tau_j (z_0),\quad j =1, \cdots, n,
\ee
where $\textquoteleft{\bullet}$' is dot-productor on $\mathbb{R}^{2n}$. Using the chain rule, we have
\be
\frac{\partial g}{\partial x_{1}}(w)=\frac{\partial g}{\partial \alpha_{1}}(w)\frac{\partial \alpha_{1}}{\partial x_{1}}+\cdots+\frac{\partial g}{\partial \beta_{n}}(w)\frac{\partial \beta_{n}}{\partial x_{1}}.
\ee
Hence,
\be\begin{split}
\left\|\frac{\partial g}{\partial x_{1}}\right\|_{L^{\infty}(B^{\varrho}(z_0,r_0))} \lesssim \sum_{|J|=1}\left\|D^{J}_{z_{0}}g\right\|_{L^{\infty}(B^{\varrho}(z_0,r_0))} \leq \sum_{|J|=1}\frac{1}{r_{0}^{d(J)}v_{\alpha}(B^{\varrho}(z_{0},r_{0}))}.
\end{split}\ee
Higher derivatives can be estimated similarly and (\ref{eq:deviative-inequality}) is obtained.

Notice that for $0<r<1$ and $\xi\in\mathbb{S}_n$, we shall extend $g(r\xi)$ form $\mathbb{S}_n$ to a function $G_{r}\in C^{\infty}(\overline{\mathbb{B}}_n)$. In fact, we can let
\be
G_{r}(w) = \left \{\begin{split}
& g(rw)g_{1}(1-|w|),\quad \text{if}\ 1-2r_0\leq|w|\leq1,\\
& \quad\quad\quad0 \quad\quad\quad\quad\quad \ \text{if}\ |w|<1-2r_0,
\end{split} \right.
\ee
where $g_{1}\in C^{\infty}_{0}([0,2r_0])$, $g_{1}=1$ if $0\leq t\leq r_{0}$ and $|g_{1}^{(j)}|\leq c_{j}r_{0}^{-j}$ ($c_{j}$ depending only on $j$). By (\ref{eq:deviative-inequality}), we have
\be\begin{split}
|Y_{\el} G_{r} (w) |
& \lesssim \sum_{k=0}^{\el}\frac{1}{r_0^{k}}|(Y_{\el-k}g)(rw)| \lesssim \sum_{k=0}^{\el} \frac{1}{r_0^{k}} \sum_{|J|=\el-k} \frac{1}{r_{0}^{d(J)} v_{\alpha}(B^{\varrho} ( z_{0}, r_{0}))}\\
& \leq c_{\el} \sum_{k=0}^{\el}\frac{1}{r_0^{k}} \cdot \frac{1}{r_{0}^{\el- k} v_{\alpha} (B^{\varrho}(z_{0}, r_{0}))} \leq \frac{c_{\el}}{r_{0}^{\el} v_{\alpha}(B^{\varrho}(z_{0}, r_{0}))}.
\end{split}\ee

Now we estimate the integration
\be\begin{split}
&\int_{\mathbb{B}_n}f(w)g(w)dv_{\alpha}(w)\\
&=2n\int^{1}_0 r^{2n-1}(1-r^2)^{\alpha}
\left(\int_{\mathbb{S}_n}f(r\xi)g(r\xi)d\sigma(\xi)\right)dr\\
&=2n\int^{1}_0 r^{2n-1}(1-r^2)^{\alpha}
\left(\int_{\mathbb{S}_n}[f(r\xi)-f(r\xi-r_{0}r\xi)]g(r\xi)d\sigma(\xi)\right)dr\\
&\qquad+2n\int^{1}_0 r^{2n-1}(1-r^2)^{\alpha}
\left(\int_{\mathbb{S}_n}f(r\xi-r_{0}r\xi)g(r\xi)d\sigma(\xi)\right)dr\\
&=I_{1}+I_{2}.
\end{split}\ee
For $I_{2}$ we note that when $r\xi\in B^{\varrho}(z_0,r_0)$, it implies
\be
\varrho(r\xi,z) \le 2 [ \varrho(r\xi,z_{0})+\varrho(z_{0},z) ] \le (2+2\delta) r_{0},
\ee
moreover,
\be
\varrho(r\xi-r_{0}r\xi,z) \le \big||r\xi-r_{0}r \xi | - |r\xi|\big|+\varrho(r\xi,z) \le (3+2\delta)r_{0}.
\ee
On the other hand, we know that
\be
1-|r\xi-r_{0}r\xi|>r_{0}.
\ee
Hence, $r\xi-r_{0}r\xi\in A_{(3+2\delta)}(z)$. Therefore,
\be\begin{split}
|I_{2}|&=\left|2n\int^{1}_0 r^{2n-1}(1-r^2)^{\alpha}
\left(\int_{\mathbb{S}_n}f(r\xi-r_{0}r\xi)g(r\xi)d\sigma(\xi)\right)dr\right|\\
& \leq \frac{ f_{3+2\delta}^{\star}(z)}{v_{\alpha}(B^{\varrho}(z_0,r_0))}2n \int^{1}_0 r^{2n-1}(1-r^2)^{\alpha}
\left(\int_{\{ \xi \in \mathbb{S}_n: \varrho (r \xi, z_0)< r_0\}}1d\sigma(\xi)\right)dr\\
&\leq f_{3+2\delta}^{\star}(z).
\end{split}\ee

Next we estimate the term $I_{1}.$ Let $v_{\xi}$ denote the unit outward vector at $\xi\in\mathbb{S}_n$, then
\be\begin{split}
I_{1}&=2n\int^{1}_0 r^{2n-1}(1-r^2)^{\alpha}
\left(\int_{\mathbb{S}_n}[f(r\xi)-f(r\xi-r_{0}r\xi)]g(r\xi)d\sigma(\xi)\right)dr\\
&=2n\int^{1}_0 r^{2n-1}(1-r^2)^{\alpha}
\left(\int_{\mathbb{S}_n}\left[\int^{r_{0}}_{0}-\frac{d f}{dr_{1}}(r\xi-r_{1}r\xi)dr_{1}\right]g(r\xi)d\sigma(\xi)\right)dr\\
&=2n\int^{1}_0 r^{2n-1}(1-r^2)^{\alpha}\int^{r_{0}}_{0}r
\left(\int_{\mathbb{S}_n}\frac{d f}{dv_{\xi}}(r\xi-r_{1}r\xi)G_{r}(\xi)d\sigma(\xi)\right)dr_{1}dr\\
\end{split}\ee
By applying the Green's formula, we have
\be\begin{split}
I_{1}=&2n\int^{1}_0 r^{2n-1}(1-r^2)^{\alpha}\int^{r_{0}}_{0}r
\left(\int_{\mathbb{S}_n}f(r\xi-r_{1}r\xi)\frac{d G_{r}(\xi)}{dv_{\xi}}d\sigma(\xi)\right)dr_{1}dr\\
&+2n\int^{1}_0 r^{2n-1}(1-r^2)^{\alpha}\int^{r_{0}}_{0}r
\left(\int_{\mathbb{B}_n}\triangle f(rw-r_{1}rw)G_{r}(w)dv(w)\right)dr_{1}dr\\
&-2n\int^{1}_0 r^{2n-1}(1-r^2)^{\alpha}\int^{r_{0}}_{0}r
\left(\int_{\mathbb{B}_n} f(rw-r_{1}rw)\triangle G_{r}(w)dv(w)\right)dr_{1}dr\\
=&I_{11}+0-I_{12},
\end{split}\ee
since $f(rw-r_{1}rw)$ is holomorphic in $\mathbb{B}_n.$ Let $Y_{2}G_{r}(w)\triangleq \triangle G_{r}(w).$ We rewrite $I_{12}=I_{121}+I_{122},$ where
\be\begin{split}
I_{121}= 2n\int^{1}_0 \int^{r_{0}}_{0} r^{2n-1}(1-r^2)^{\alpha}r \int_{\mathbb{B}_n\setminus\mathbb{D}_{1-r_{0}}} f(r w-r_{1}r w)Y_{2}G_{r}(w)dv(w) dr_{1} d r\\
\end{split}\ee
and
\be\begin{split}
I_{122}=  2n\int^{1}_0 \int^{r_{0}}_{0} r^{2n-1}(1-r^2)^{\alpha}r \int_{\mathbb{D}_{1-r_{0}}} f(r w - r_{1} r w) Y_{2}G_{r}(w)dv(w)  d r_{1}d r.
\end{split}\ee
For the term $I_{122},$ when $w\in\mathbb{D}_{1-r_{0}}$ we know that
\be
\varrho(rw-r_{1}rw,z)<(3+2\delta)r_{0}<(3+2\delta)(1-|rw-r_{1}rw|),
\ee
then,
\be\begin{split}
|I_{122}|=
& 2n\int^{1}_0 r^{2n-1}(1-r^2)^{\alpha}\int^{r_{0}}_{0}r \int_{1-2r_{0}}^{1-r_{0}}2ns^{2n-1}\\
& \times \left(\int_{\mathbb{S}_n} f(rs\xi-r_{1}rs\xi)Y_{2}G_{r}(s\xi)d\sigma(\xi)\right)d s dr_{1}dr\\
\lesssim & \frac{f_{3+2\delta}^{\star}(z)r_{0}}{r_{0}^2 v_{\alpha}(B^{\varrho}(z_0,r_0))}2n \int^{1}_0 r^{2n-1}(1-r^2)^{\alpha}\\
&\times\left(\int_{1-2r_{0}}^{1-r_{0}}2ns^{2n-1}\int_{\{\xi\in\mathbb{S}_n:\varrho(rs\xi,z_{0})<r_{0}\}}1d\sigma(\xi)ds\right)dr\\
\lesssim&\frac{f_{3+2\delta}^{\star}(z)r_{0}}{r_{0}^2v_{\alpha}(B^{\varrho}(z_0,r_0))}2n\int^{1}_0 r^{2n-1}(1-r^2)^{\alpha}\\
&\times\left(\int_{1-2r_{0}}^{1-r_{0}}2ns^{2n-1}\int_{\{\xi\in\mathbb{S}_n:\varrho(r\xi,z_{0})<3r_{0}\}}1d\sigma(\xi)ds\right)dr\\
\lesssim & \frac{f_{3+2\delta}^{\star}(z)r_{0}^{2}}{r_{0}^2v_{\alpha}(B^{\varrho}(z_0,r_0))}v_{\alpha}(B^{\varrho}(z_0,3r_0))\\
\lesssim & f_{3+2\delta}^{\star}(z).
\end{split}\ee

Therefore, we have
\be\begin{split}
\Big | & \int_{\mathbb{B}_n}f(w)g(w)dv_{\alpha}(w) \Big |\\
& \lesssim f_{3+2\delta}^{\star}(z)\\
& \quad + \left | \int^{1}_0 r^{2n-1}(1-r^2)^{\alpha}\int^{r_{0}}_{0}r
\left(\int_{\mathbb{S}_n}f(r\xi-r_{1}r\xi)\frac{d G_{r}(\xi)}{dv_{\xi}}d\sigma(\xi)\right)dr_{1}dr\right|\\
& \quad +  \Bigg | \int^{1}_0 r^{2n-1}(1-r^2)^{\alpha} \\
& \quad \quad \times \int^{r_{0}}_{0} \left ( \int_{\mathbb{B}_n\setminus\mathbb{D}_{1-r_{0}}} f(rw-r_{1}rw)Y_{2}G_{r}(w)dv(w)\right)dr_{1} d r \Bigg |\\
& = f_{3+2\delta}^{\star}(z)\\
& \quad + \left| \int^{1}_0 r^{2n-1}(1-r^2)^{\alpha}\int^{r_{0}}_{0}
\int_{\mathbb{S}_n}f\big((1-r_{1})r\xi\big)Y_{1}G_{r}(\xi)d\sigma(\xi)dr_{1} d r \right|\\
& \quad +  \Bigg |\int^{1}_0 r^{2n-1}(1-r^2)^{\alpha}\\
& \quad \quad \times \int^{r_{0}}_{0} \int_{1-r_{0}}^1 \int_{\mathbb{S}_n} f \big ( (1-r_{1})r s_{1}\xi\big)Y_{2}G_{r}(s_{1}\xi)d\sigma(\xi)d s_{1}dr_{1}  d r \Bigg |
\end{split}\ee
where, for simplicity, we let
\be
Y_{1}G_{r}(\xi)\triangleq\frac{d G_{r}(\xi)}{dv_{\xi}}
\ee
and replace the term
\be
2n s_{1}^{2n-1}Y_{2}G_{r}(s_{1}\xi)
\ee
by $Y_{2}G_{r}(s_{1}\xi)$.

By repeating the method used in the estimations of $I_{1}$ and $I_{2},$ we shall have the following estimate
\be\begin{split}
\Big | & \int_{\mathbb{B}_n}f(w)g(w)dv_{\alpha}(w) \Big |\\
& \lesssim f_{3+2\delta}^{\star}(z) + \sum_{0\leq k\leq\el} \int^{1}_0 r^{2n-1}(1-r^2)^{\alpha}
 \int^{r_{0}}_{0}\int_{r_{1}}^{r_{0}}\cdots\int^{r_{0}}_{r_{\el-1}}\int_{1-r_{0}}^{1}\cdots\int^{1}_{1-r_{0}}\\
&\quad \times \left | \int_{\mathbb{S}_n}f\big((1-r_{\el})r s_{1}\cdots
s_{k}\xi\big)Y_{\el+k}G_{r}(s_{1}\cdots s_{k}\xi)d\sigma(\xi)\right|d s_{k}\cdots d s_{1}d r_{\el}\cdots d r_{1}dr,
\end{split}\ee
which also can be obtained by mathematical induction and we omit the details. Notice that
\be\begin{split}
\left | f \big ((1-r_{\el})r s_{1} \cdots s_{k}\xi\big) \right |
\leq & f^{\star\star}_{M}(z) \left(1+\frac{\varrho\big((1-r_{\el})r s_{1}\cdots
s_{k}\xi,z\big)}{1-(1-r_{\el})r s_{1}\cdots
s_{k}}\right)^{M}\\
\leq & f^{\star\star}_{M}(z)\left(1+\frac{(3+2\delta)r_{0}}{r_{\el}}\right)^{M}\\
\leq & f^{\star\star}_{M}(z)(4+2\delta)^{M}\left(\frac{r_{0}}{r_{\el}}\right)^{M}
\end{split}\ee
and
\be\begin{split}
\varrho(r\xi,z_{0})\leq&|r-r s_{1}\cdots s_{k}|+\varrho(r s_{1}\cdots
s_{k}\xi,z_{0})\\
\leq&|r-r s_{1}|+\sum_{i=1}^{k-1}|r s_{1}\cdots s_{i}-r s_{1}\cdots s_{i}s_{i+1}|+\varrho(r s_{1}\cdots
s_{k}\xi,z_{0})\\
\leq&k r_{0}+r_{0}\leq(k+1)r_{0}.
\end{split}\ee
Now we fix $\el=L>M,$ then
\be\begin{split}
\Big | & \int_{\mathbb{B}_n}f(w)g(w)dv_{\alpha}(w) \Big |\\
\lesssim & f_{3+2\delta}^{\star}(z) + f^{\star\star}_{M}(z) \sum_{0\leq k\leq \el} \int^{1}_0 r^{2n-1}(1-r^2)^{\alpha}
\int^{r_{0}}_{0}\int_{r_{1}}^{r_{0}}\cdots\int^{r_{0}}_{r_{\el-1}}\int_{1-r_{0}}^{1}\cdots\int^{1}_{1-r_{0}}\\
& \times \int_{\{\xi \in \mathbb{S}_n: \varrho(r s_{1}\cdots
s_{k}\xi,z_0) <r_0\}}\left(\frac{r_{0}}{r_{\el}}\right)^{M}\frac{1}{r_{0}^{\el+k} v_{\alpha}(B^{\varrho}(z_{0}, r_{0}))}d\sigma(\xi)
d s_{k}\cdots d s_{1}d r_{\el}\cdots d r_{1}dr\\
\lesssim & f_{3+2\delta}^{\star}(z) + f^{\star\star}_{M}(z)\sum_{0\leq k \leq \el} \int^{1}_0 r^{2n-1}(1-r^2)^{\alpha}
\int^{r_{0}}_{0}\int_{r_{1}}^{r_{0}}\cdots\int^{r_{0}}_{r_{\el-1}}\int_{1-r_{0}}^{1}\cdots\int^{1}_{1-r_{0}}\\
& \times \int_{\{\xi \in \mathbb{S}_n: \varrho(r\xi,z_0) <(k+1)r_0\}}\left(\frac{r_{0}}{r_{\el}}\right)^{M}\frac{1}{r_{0}^{\el+k} v_{\alpha}(B^{\varrho}(z_{0}, r_{0}))}d\sigma(\xi)d s_{k}\cdots d s_{1}d r_{\el}\cdots d r_{1}dr\\
=& f_{3+2\delta}^{\star}(z) + \sum_{0\leq k\leq\el}\frac{f^{\star\star}_{M}(z) r_{0}^{k+M}}{r_{0}^{\el+k} v_{\alpha}(B^{\varrho}(z_{0}, r_{0}))} \int^{1}_0 r^{2n-1}(1-r^2)^{\alpha}
\int^{r_{0}}_{0}\int_{r_{1}}^{r_{0}}\cdots\int^{r_{0}}_{r_{\el-1}}\\
&\times\int_{\{\xi \in \mathbb{S}_n: \varrho(r\xi,z_0) <(k+1)r_0\}}\left(\frac{1}{r_{\el}}\right)^{M}d\sigma(\xi)d r_{\el}\cdots d r_{1}dr\\
\lesssim&f_{3+2\delta}^{\star}(z)+\sum_{0\leq k\leq\el}\frac{f^{\star\star}_{M}(z)r_{0}^{k+M}}{r_{0}^{\el+k} v_{\alpha}(B^{\varrho}(z_{0}, r_{0}))}
r_{0}^{\el-M}v_{\alpha}\big(B^{\varrho}(z_0,(k+1)r_0)\big)\\
\lesssim&f_{3+2\delta}^{\star}(z) + f^{\star\star}_{M}(z).
\end{split}\ee
Therefore, we have obtained that
\be
\left |\int_{\mathbb{B}_n}f(w)g(w)dv_{\alpha}(w) \right | \lesssim f^{\star}_{3+2\delta}(z) + f^{\star\star}_{M}(z)
\ee
for all $g\in\mathcal{G}^{L}_{\delta}(z)$ and $z\in\mathbb{B}_n.$ Hence, by Lemmas \ref{le:non-tangent} and \ref{le:tangent} we obtain the required result.
\end{proof}

\section{Statement of main results}\label{result}

In order to state the real-variable atomic decomposition of Bergman spaces for $0 < p \le 1,$ we will introduce the notion of $(p, \8)_{\alpha}$-atom. To this end, we need more notations.

Let $0<p \le 1$ and $\alpha > -1.$ Set
\be
N_{p, \alpha} = \max \bigg \{ \Big [ 2(n+1) \big ( \frac{1}{p}-1 \big ) \Big ],\; \Big [ 2(n+1+\alpha) \big ( \frac{1}{p}-1 \big ) \Big ] \bigg \}+1,
\ee
where $[x]$ denotes the greatest integer less than $x.$ Let $z_0 \in \mathbb{B}_n$ and $r_0 >0.$ For any $\phi\in\mathcal{C}^\infty(B^{\varrho}(z_{0},r_{0}))$, we define the quantity
\be
\|\phi\|_{\mathcal{S}_{N}(B^{\varrho}(z_{0},r_{0}))}: = \sum_{|J|= N}r_0^{d(J)}\left\|D^{J}_{z_0}\phi\right\|_{L^{\infty}(B^{\varrho}(z_{0},r_{0}))}.
\ee

\begin{definition}\label{df:p-atom}
Let $0<p \le 1$ and $\alpha > -1.$ Let $N \ge N_{p, \alpha}$ be an integer. A measurable function $a$ on $\mathbb{B}_n$ is a $(p, \8, N)_{\alpha}$-atom if there exist $z_0 \in \mathbb{B}^n$ and $r_0 >0$ such that
\begin{enumerate}[{\rm (1)}]
\item $a$ is supported in $B^{\varrho}(z_{0},r_{0});$

\item $|a(z)| \le v_{\alpha} (B^{\varrho}(z_{0},r_{0}))^{-\frac{1}{p}};$

\item $\int_{\mathbb{B}_n}a(z)\,dv_{\alpha}(z) = 0;$

\item for all $\phi\in\mathcal{C}^\infty(B^{\varrho}(z_{0},r_{0})),$
\be
\left|\int_{\mathbb{B}^n}a(z)\phi(z)dv_{\alpha}(z)\right|\le
\|\phi\|_{\mathcal{S}_N (B^{\varrho}(z_{0},r_{0}))}
v_{\alpha}(B^{\varrho}(z_{0},r_{0}))^{1-\frac{1}{p}}.
\ee

\end{enumerate}
Any bounded function $a$ with $\|a\|_{L^\infty}\leq1$ is also considered to be a $(p, \8, N)_{\alpha}$-atom.

We regard a $(p, \8, N_{p, \alpha})_{\alpha}$-atom as a $(p, \8)_{\alpha}$-atom.
\end{definition}

\begin{remark}\label{rk:atomBoundedfunct}\rm
\begin{enumerate}[{\rm (i)}]

\item As in the case of Hardy spaces on the complex ball (see the definition of $p$-atoms in \cite[Section 4]{GL1978}), it seems necessary to consider all bounded functions $a$ with $\|a\|_{L^{\8}} \le 1$ as atoms in the complex ball (see Section \ref{AtomConstruction} below).

\item We remark that the condition (4) replaces the classical higher moment condition and is similar to the one found in \cite{GP2002, KL1995}.

%\item Let $N_1 > N_2 \ge N_{p, \alpha}.$ Then $(p, \8, N_1)_{\alpha}$-atoms are necessarily $(p, \8, N_2)_{\alpha}$-atoms.

\end{enumerate}
\end{remark}

These atoms satisfy the following useful estimates.

\begin{theorem}\label{th:p-atomBergman}
Let $0<p \le 1$ and $\alpha > -1.$ Let $N \ge N_{p, \alpha}$ be a integer.
\begin{enumerate}[{\rm (1)}]

\item For any $(p, \8, N)_{\alpha}$-atom $a,$ $\|a \|_{p, \alpha} \le 1.$

\item There is a constant $C>0$ depending only on $p, n,\alpha$ and $N$ such that
\be
\left \|P_{\alpha}(a) \right \|_{p, \alpha} \leq C
\ee
for any $(p, \8, N)_{\alpha}$-atom $a.$

\item If $\{a_k\}$ is a sequence of $(p, \8)_{\alpha}$-atoms, then for any sequence of complex numbers $\{\lambda_k\}$ with $\sum_k |\lambda_k|^p < \8,$ the series $\sum_k \lambda_k P_{\alpha} (a_k)$ converges unconditionally in $\mathcal{A}^p_{\alpha}$ such that
\be
f : = \sum_k \lambda_k P_{\alpha} (a_k) \in \mathcal{A}^p_{\alpha} \quad \text{and} \quad \| f \|^p_{p , \alpha} \lesssim \sum_k |\lambda_k|^p.
\ee

\end{enumerate}
\end{theorem}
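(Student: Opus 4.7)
Item (1) is immediate from Definition \ref{df:p-atom}: if $\|a\|_{L^\infty}\le 1$ then $\|a\|_{p,\alpha}^p\le v_\alpha(\mathbb{B}_n)=1$, and otherwise conditions (1)--(2) give $\|a\|_{p,\alpha}^p\le v_\alpha(B^\varrho(z_0,r_0))^{-1}\cdot v_\alpha(B^\varrho(z_0,r_0))=1$. Item (3) will follow from (2) and the $p$-subadditivity $|x+y|^p\le|x|^p+|y|^p$ valid for $0<p\le 1$:
\[
\Big\|\sum_k\lambda_kP_\alpha(a_k)\Big\|_{p,\alpha}^p\le\sum_k|\lambda_k|^p\|P_\alpha(a_k)\|_{p,\alpha}^p\le C^p\sum_k|\lambda_k|^p,
\]
and applying the same bound to arbitrary tails shows the partial sums are Cauchy in the quasi-Banach space $\mathcal{A}^p_{\alpha}$, hence converge unconditionally. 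The substance therefore lies in (2).

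For (2), the bounded atoms ($\|a\|_{L^\infty}\le 1$) are disposed of by the $L^2$-boundedness of the orthogonal projection $P_\alpha$ combined with $\|\cdot\|_{p,\alpha}\le\|\cdot\|_{2,\alpha}$ on the probability space $(\mathbb{B}_n,dv_\alpha)$ (since $p\le 1\le 2$), giving $\|P_\alpha(a)\|_{p,\alpha}\le\|a\|_{2,\alpha}\le 1$. For a genuine $(p,\infty,N)_{\alpha}$-atom $a$ supported in $B^\varrho(z_0,r_0)$ I would split
\[
\int_{\mathbb{B}_n}|P_\alpha(a)|^p\,dv_\alpha=\int_{B^\varrho(z_0,4r_0)}+\int_{\mathbb{B}_n\setminus B^\varrho(z_0,4r_0)}=:\mathrm{I}+\mathrm{II}.
\]
The near piece $\mathrm{I}$ is controlled by H\"older and $L^2$-boundedness: $\mathrm{I}\le v_\alpha(B^\varrho(z_0,4r_0))^{1-p/2}\|a\|_{2,\alpha}^p$, which is $\lesssim 1$ using the doubling consequence of Lemma \ref{le:Psedu-metric02} together with $\|a\|_{2,\alpha}^2\le v_\alpha(B^\varrho(z_0,r_0))^{1-2/p}$ from Definition \ref{df:p-atom}(2). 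For the far piece $\mathrm{II}$, I apply Definition \ref{df:p-atom}(4) with $\phi=K^\alpha(z,\cdot)|_{B^\varrho(z_0,r_0)}$ to obtain the pointwise bound
\[
|P_\alpha(a)(z)|\le\|K^\alpha(z,\cdot)\|_{\mathcal{S}_N(B^\varrho(z_0,r_0))}\,v_\alpha(B^\varrho(z_0,r_0))^{1-1/p}.
\]
The crucial technical input is the sharp $\tau$-extremal derivative estimate
\[
|D^J_{z_0}K^\alpha(z,w)|\lesssim\frac{1}{|1-\langle z,w\rangle|^{n+1+\alpha+d(J)}},\qquad w\in B^\varrho(z_0,r_0),
\]
readable from the explicit formula $K^\alpha(z,w)=(1-\langle z,w\rangle)^{-(n+1+\alpha)}$: each normal derivative (in the $v_1(z_0),\tau_1(z_0)$ direction) costs a full power of $|1-\langle z,w\rangle|^{-1}$, while each tangential derivative costs only a half power, which is exactly what is tracked by $d(J)$. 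For $\varrho(z,z_0)\ge 4r_0$, the quasi-triangle inequality together with Lemma \ref{le:Psedu-metric03} gives $|1-\langle z,w\rangle|\gtrsim\varrho(z,z_0)$ uniformly for $w\in B^\varrho(z_0,r_0)$, and hence
\[
|P_\alpha(a)(z)|\lesssim\sum_{|J|=N}\frac{r_0^{d(J)}}{\varrho(z,z_0)^{n+1+\alpha+d(J)}}\,v_\alpha(B^\varrho(z_0,r_0))^{1-1/p}.
\]
Raising to the $p$th power, integrating, and decomposing dyadically via $\{2^kr_0\le\varrho(z,z_0)<2^{k+1}r_0\}$ combined with the two-regime volume estimate of Lemma \ref{le:Psedu-metric02} reduces $\mathrm{II}$ to finitely many geometric series in $k$. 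The regime $2^kr_0\le 1-|z_0|$ requires $d(J)>(n+1)(1/p-1)$ and the regime $2^kr_0>1-|z_0|$ requires $d(J)>(n+1+\alpha)(1/p-1)$; since $d(J)\ge N/2$ when $|J|=N$, both are guaranteed precisely by $N\ge N_{p,\alpha}$---the two terms in the maximum defining $N_{p,\alpha}$ being tailored to these two regimes---producing a bounded sum independent of the atom.

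The main obstacle I anticipate is the anisotropic kernel-derivative estimate in the $\tau$-extremal frame, together with the bookkeeping that shows $N\ge N_{p,\alpha}$ simultaneously controls both dyadic regimes. The appearance of $d(J)$ rather than $|J|$ is essential: it is what makes the $r_0^{d(J)}$ factor from the $\mathcal{S}_N$-norm and the $|1-\langle z,w\rangle|^{-(n+1+\alpha+d(J))}$ factor from the kernel pair up into a scale-invariant quantity, and ultimately what fixes the correct value of $N_{p,\alpha}$.
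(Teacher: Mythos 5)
Your overall strategy (near/far splitting at $B^{\varrho}(z_0,4r_0)$, H\"older plus $L^2$-boundedness for the near piece and for bounded atoms, the moment condition (4) with $\phi=K^{\alpha}(z,\cdot)$ and a $\tau$-extremal kernel-derivative estimate plus dyadic shells for the far piece) is the same as the paper's, and items (1) and (3) are handled correctly. However, there is a genuine gap in the far piece when $\alpha>0$. After the (correct) estimate $|D^J_{z_0}K^{\alpha}(z,\cdot)(w)|\lesssim |1-\langle z,w\rangle|^{-(n+1+\alpha+d(J))}$ you replace $|1-\langle z,w\rangle|$ wholesale by $\varrho(z,z_0)$, which discards the information $|1-\langle z,w\rangle|\ge 1-|w|\approx 1-|z_0|$. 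With your pointwise bound, the shell $S_k=\{2^kr_0\le\varrho(z,z_0)<2^{k+1}r_0\}$ in the regime $2^kr_0\le 1-|z_0|$ contributes, by Lemma \ref{le:Psedu-metric02},
\begin{equation*}
\int_{S_k}|P_{\alpha}(a)|^p\,dv_{\alpha}\;\lesssim\;\frac{r_0^{pd(J)}\,(2^kr_0)^{n+1}(1-|z_0|)^{\alpha}}{(2^kr_0)^{p(n+1+\alpha+d(J))}}\;v_{\alpha}(B^{\varrho}(z_0,r_0))^{p-1}\;\approx\;2^{k[\,n+1-p(n+1+\alpha+d(J))\,]}\Big(\tfrac{1-|z_0|}{r_0}\Big)^{p\alpha},
\end{equation*}
because $v_{\alpha}(B^{\varrho}(z_0,r_0))^{p-1}\approx r_0^{(n+1)(p-1)}(1-|z_0|)^{\alpha(p-1)}$ there. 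The powers of $(1-|z_0|)$ do not cancel: summing over these shells leaves an uncontrolled factor $\big((1-|z_0|)/r_0\big)^{p\alpha}$, which is unbounded over atoms with $r_0\ll 1-|z_0|$ when $\alpha>0$. So your claim that this regime only needs $d(J)>(n+1)(1/p-1)$ is not delivered by the bound you actually use, and the constant in (2) fails to be uniform in the atom. (For $-1<\alpha\le 0$ the extra factor is $\le 1$ and your computation does close; also note that for $\alpha<0$ the outer regime in fact forces the $(n+1)$-condition as well, through the factor $v_{\alpha}(B^{\varrho}(z_0,r_0))^{p-1}$, but this is harmless since $N_{p,\alpha}$ is the maximum of the two thresholds.)

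The repair is exactly the sharper form of the kernel estimate proved in the paper (Lemma \ref{le:Kernel}): keep the weighted volume, i.e.
\begin{equation*}
\big|D^J_{z_0}\big(K^{\alpha}(z,\cdot)\big)(w)\big|\;\lesssim\;\frac{1}{\varrho(z,w)^{d(J)}\,v_{\alpha}\big(B^{\varrho}(w,\varrho(z,w))\big)}\;\approx\;\frac{1}{\varrho(z,w)^{d(J)+n+1}\,\max\big(\varrho(z,w),1-|w|\big)^{\alpha}},
\end{equation*}
which follows from your own computation by using $|1-\langle z,w\rangle|\gtrsim\max\big(\varrho(z,w),1-|w|\big)$ and splitting the exponent as $(n+1)+\alpha$ rather than bounding everything by $\varrho(z,z_0)$. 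With this version the factors $(1-|z_0|)^{\alpha}$ coming from the kernel, from $v_{\alpha}(B^{\varrho}(z_0,r_0))^{p-1}$ and from the shell measures cancel, each regime reduces to a genuine geometric series, and the thresholds $d(J)>(n+1)(1/p-1)$ and $d(J)>(n+1+\alpha)(1/p-1)$ appear exactly as you anticipated; since $d(J)\ge N/2$ and $N\ge N_{p,\alpha}$, the sum is bounded by a constant depending only on $n,p,\alpha,N$. The remaining ingredients of your proposal (the half-power gain only in the directions $v_i(z_0),\tau_i(z_0)$ with $i\ge2$, valid because $\varrho(w,z_0)<\tfrac14\varrho(z,z_0)$ on the far region, and the bookkeeping via $d(J)$) are correct and coincide with the paper's argument.
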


The following theorem shows that the converse statement of Theorem \ref{th:p-atomBergman} (3) holds true as well. This is the main result of this paper.

\begin{theorem}\label{th:atomic-deco}
Let $0<p\le1$ and $\alpha > -1.$ Let $N \ge N_{p, \alpha}$ be a integer. If $f \in \mathcal{A}^p_{\alpha},$ then there exist a scalar sequence $\{\lambda_j\}$ in $\mathbb{C}$ with $\sum_j |\lambda_j |^p < \8,$ and a sequence of $(p,\infty, N)_{\alpha}$-atoms $\{a_j\}$ such that
\begin{enumerate}[{\rm (1)}]

%\item $f = \sum_j \lambda_j a_j$ almost everywhere in $\mathbb{B}_n;$

\item $f = \sum_j \lambda_j a_j$ in the sense of distributions;

\item $f = \sum_j \lambda_j a_j,$ where the series $\sum_j \lambda_j a_j$ converges unconditionally in $L^p_{\alpha} (\mathbb{B}_n);$

\item $f = \sum_j \lambda_j P_{\alpha}(a_j),$ where the series $\sum_j \lambda_j P_{\alpha} (a_j)$ converges unconditionally in $\A^p_{\alpha};$

\item $\sum_j |\lambda_j |^p \lesssim \| f \|^p_{p , \alpha}.$

\end{enumerate}

Consequently, for any $f \in \mathcal{A}^p_{\alpha}$ one has
\be
\| f \|^p_{p , \alpha} \approx \inf \bigg \{ \sum_j |\lambda_j |^p:\; f = \sum_j \lambda_j P_{\alpha} (a_j) \bigg \},
\ee
where the infimum is taken over all decompositions of $f$ described above.
\end{theorem}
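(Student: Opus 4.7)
The plan is to emulate the Calderón--Zygmund / grand maximal function approach developed for Hardy spaces (cf.~\cite{Dafni1994, GL1978, GP2002, KL1995}), adapted to the homogeneous space $(\mathbb{B}_n, \varrho, dv_\alpha)$. Given $f \in \mathcal{A}^p_\alpha$, I fix $M > (n+1+\alpha)/p$ and an integer $L \ge \max\{M, N\}$, so that by Lemma \ref{le:grand} the grand maximal function satisfies $\|\mathcal{K}_{\delta,L}(f)\|_{p,\alpha} \lesssim \|f\|_{p,\alpha}$. For $k \in \mathbb{Z}$ I set $\Omega_k = \{z \in \mathbb{B}_n : \mathcal{K}_{\delta,L}(f)(z) > 2^k\}$, which are open with $\sum_k 2^{kp} v_\alpha(\Omega_k) \lesssim \|f\|^p_{p,\alpha}$.

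Since $(\mathbb{B}_n,\varrho,dv_\alpha)$ is a space of homogeneous type, each $\Omega_k$ admits a Whitney-type covering by pseudo-balls $\{B^\varrho(z^k_j, r^k_j)\}_j$ with bounded overlap and $r^k_j \approx \varrho(z^k_j, \mathbb{B}_n \setminus \Omega_k)$. Subordinate to a slight enlargement of this family I build, using the bump functions constructed in Section \ref{good-function}, a smooth partition of unity $\{\varphi^k_j\}$ with $\|\varphi^k_j\|_{L,z^k_j,r^k_j} \lesssim 1$. I then form ``bad'' pieces $b^k_j = (f - c^k_j)\varphi^k_j$, where $c^k_j$ is a polynomial in the $\tau$-extremal coordinates around $z^k_j$ of weighted degree less than $N$ chosen so that $b^k_j$ satisfies the cancellation condition (4) of Definition \ref{df:p-atom}. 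Telescoping the good/bad splits $f = g_k + \sum_j b^k_j$ as $k \to \pm\infty$ and reorganizing the differences $g_{k+1}-g_k$ yields $f = \sum_{k,j} \lambda^k_j a^k_j$, where each $a^k_j$ is a $(p,\infty,N)_\alpha$-atom and $|\lambda^k_j| \lesssim 2^k v_\alpha(B^\varrho(z^k_j,r^k_j))^{1/p}$. Bounded overlap then gives $\sum_{k,j}|\lambda^k_j|^p \lesssim \sum_k 2^{kp} v_\alpha(\Omega_k) \lesssim \|f\|^p_{p,\alpha}$, proving (4). Convergence of the partial sums in $L^p_\alpha$ (with summable $p$-quasi-norms controlled by $|\lambda^k_j|^p$) yields (2), and via local $L^1$ estimates on compact subsets of $\mathbb{B}_n$ also the distributional statement (1). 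For (3), Theorem \ref{th:p-atomBergman}(3) produces $h := \sum_{k,j} \lambda^k_j P_\alpha(a^k_j) \in \mathcal{A}^p_\alpha$ with unconditional convergence there; since pointwise evaluation is continuous on $\mathcal{A}^p_\alpha$ by Lemma \ref{le:BergmanEst03}, swapping sum and integral against $K^\alpha(z,\cdot)$---justified by the boundedness of the kernel on compacta together with the uniform bound in Theorem \ref{th:p-atomBergman}(2)---and invoking the reproducing formula $P_\alpha f = f$ identifies $h$ with $f$. The norm equivalence at the end follows from (4) and Theorem \ref{th:p-atomBergman}(3).

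The main obstacle will be verifying that the pieces $b^k_j$, once normalized, really are $(p,\infty,N)_\alpha$-atoms, especially conditions (2) and (4) of Definition \ref{df:p-atom}. Condition (2) requires uniform $L^\infty$ control of $f$ on each Whitney ball, extracted from the grand maximal function bound at a ``tentacle'' point lying just outside $\Omega_k$; one also has to treat the boundary regime $r^k_j \gtrsim 1 - |z^k_j|$, where the pseudo-balls degenerate and the atoms become essentially bounded functions, which is precisely the reason for the second clause of Definition \ref{df:p-atom}. Condition (4) is more delicate: the weighted derivative order $d(J)$ built into the atomic definition is tied to the anisotropic geometry of pseudo-balls through Lemma \ref{le:Psedu-metric02}, and the polynomial subtraction $c^k_j$ must be carried out in the $\tau$-extremal local coordinates so that the approximation error scales as $r_0^{d(J)}$ uniformly across all multi-indices $|J| \le N$. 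Carrying out these two points carefully and uniformly in $k$ and $j$ is what will occupy Sections \ref{AtomConstruction} and \ref{AtomicdecompPf}.
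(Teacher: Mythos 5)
Your plan follows the same overall route as the paper (grand maximal function, level sets, Whitney covering and partition of unity, local polynomial subtraction, telescoping, $\ell^p$ coefficient bound, then passage to the $P_\alpha$-images), but as stated it has several genuine gaps. First, you take level sets $\Omega_k$ for \emph{all} $k\in\mathbb{Z}$, defined by $\mathcal{K}_{\delta,L}(f)$ alone. Since $v_\alpha$ is a probability measure, for $k$ sufficiently negative $\Omega_k$ may be all of $\mathbb{B}_n$, and then the Whitney covering of Lemma \ref{le:Whitney} (whose radii are $r_i=\frac{1}{2K}\varrho(z_i,\Omega_k^c)$) does not exist and the telescoping ``as $k\to-\infty$'' collapses. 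The paper's remedy is to start at a level $2^{k_0}$ comparable to $\|\mathcal{K}_{\mu,L}(f)+f^{\star}_{\delta}\|_{p,\alpha}$ and to absorb everything below it into a single piece $h_0$ with $\|h_0\|_{L^\infty}\lesssim 2^{k_0}$; this is an atom only because bounded functions are admitted as atoms, which is the actual reason for the second clause of Definition \ref{df:p-atom} (not the degenerate-ball regime you cite). Second, the size estimates for the bad pieces require the pointwise bound $|f|\lesssim 2^{k_0+k+1}$ off $\mathcal{O}_{k+1}$ (e.g.\ for the term $f\chi_{\mathcal{O}_k\setminus\mathcal{O}_{k+1}}$); this comes from the non-tangential maximal function, which is why the paper defines the level sets via $\mathcal{K}_{\mu,L}(f)+f^{\star}_{\delta}$ rather than the grand maximal function alone. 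With your sets built from $\mathcal{K}$ only, the bound $|f|\lesssim 2^{k}$ on $\Omega_{k}^c$ is not available without an extra argument.

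Third, after reorganizing $h_{k+1}-h_k$ the atomic pieces are \emph{not} of the simple form $(f-c^k_j)\varphi^k_j$: one must include the cross-level corrections $\mathcal{P}_{\varphi_{j}^{k+1}}\big([f-\mathcal{P}_{\varphi_{j}^{k+1}}(f)]\varphi_{i}^{k}\big)\varphi_{j}^{k+1}$ as in \eqref{eq:b-decomp}, both to keep the support inside a fixed dilate $B^{\varrho}(z_i^k,Cr_i^k)$ and to preserve the cancellation and testing condition (4) of Definition \ref{df:p-atom}; controlling these terms is exactly the content of Lemmas \ref{le:good-function3} and \ref{le:good-function}, and your sketch does not account for them. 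Finally, in part (3) you invoke the reproducing formula $P_\alpha f=f$ for an arbitrary $f\in\mathcal{A}^p_\alpha$; for $0<p<1$ such an $f$ need not belong to $L^1_\alpha(\mathbb{B}_n)$, so $P_\alpha f$ is not even given by an absolutely convergent integral and the interchange you propose is unjustified. The paper avoids this by first proving (3) for $f\in\mathcal{A}^p_\alpha\cap\mathcal{A}^2_\alpha$ (where $P_\alpha f=f$ is legitimate) and then using density of this intersection together with a telescoping series whose terms have geometrically small $p$-quasi-norms. Each of these points is repairable, but they are precisely where your outline falls short of a complete proof.
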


In the sequel we will prove Theorem \ref{th:p-atomBergman}, while Theorem \ref{th:atomic-deco} will be proved in Section \ref{atomicdecomp} below.

The following lemma is crucial for the proof of Theorem \ref{th:p-atomBergman}.

\begin{lemma}\label{le:Kernel}
Let $z_{0}\in\mathbb{B}_n$ and $J=(j_1, j_2, j_3, \cdots, j_{2n})$ be a multi-index such that $|J|=N.$ If $\varrho(w,z_0) < \frac{1}{4} \varrho(z,z_0),$ then
\be
\left|D^J_{z_{0}}\big(K^{\alpha}(z,\cdot)\big)(w)\right|\le\frac{C_{N,n,\alpha}}{\varrho(z,w)^{d(J)}v_{\alpha}(B^{\varrho}(w,\varrho(z,w)))},
\ee
where $C_{N,n,\alpha}$ depending only on $N,n,\alpha$.
\end{lemma}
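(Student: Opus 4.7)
The strategy is to compute $D^{J}_{z_{0}}K^{\alpha}(z,\cdot)(w)$ explicitly and then bound each factor using the pointwise information on $z$ furnished by Lemma~\ref{le:Psedu-metric03}. By unitary invariance of both the Bergman kernel and of $\varrho$, I may assume $z_{0}=(r_{0},0,\ldots,0)$ with $r_{0}=|z_{0}|$, in which case the $\tau$-extremal frame coincides with $v_{k}(z_{0})=e_{k}$ and each $\partial_{\alpha_{k}}$, $\partial_{\beta_{k}}$ is a standard real partial derivative in the $k$-th complex coordinate of $w$. Writing $\langle z,w\rangle=z_{1}\bar{w}_{1}+\cdots+z_{n}\bar{w}_{n}$ and applying $\partial^{j_{1}}_{\alpha_{1}}\partial^{j_{2}}_{\beta_{1}}\cdots\partial^{j_{2n-1}}_{\alpha_{n}}\partial^{j_{2n}}_{\beta_{n}}$ to $K^{\alpha}(z,w)=(1-\langle z,w\rangle)^{-(n+1+\alpha)}$, each derivative peels off a factor $\pm z_{k}$ (or $\pm \mathrm{i}z_{k}$) together with one extra power of $(1-\langle z,w\rangle)^{-1}$, so that
\[
\bigl|D^{J}_{z_{0}}K^{\alpha}(z,\cdot)(w)\bigr| \;\le\; C_{N,n,\alpha}\,\prod_{k=1}^{n}|z_{k}|^{j_{2k-1}+j_{2k}}\,|1-\langle z,w\rangle|^{-(n+1+\alpha+|J|)}.
\]

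For the numerator I use Lemma~\ref{le:Psedu-metric03}: $|z_{1}|\le 1$ and $|z_{k}|^{2}\le 2\varrho(z,z_{0})$ for $k\ge 2$, whence $\prod_{k}|z_{k}|^{j_{2k-1}+j_{2k}}\lesssim \varrho(z,z_{0})^{(j_{3}+\cdots+j_{2n})/2}$. I expect the main obstacle to be the \emph{lower bound} for the denominator, namely
\[
|1-\langle z,w\rangle| \;\gtrsim\; \max\bigl\{\varrho(z,w),\,1-|w|\bigr\}.
\]
The inequality $|1-\langle z,w\rangle|\ge 1-|z||w|\ge 1-|w|$ is immediate. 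For the $\varrho$-part I would decompose $1-\langle z,w\rangle=(1-|z||w|)+|z||w|\bigl(1-\tfrac{\langle z,w\rangle}{|z||w|}\bigr)$, note that the real part of the second summand is nonnegative, apply $|A+B|^{2}\ge A^{2}+|B|^{2}\ge \tfrac{1}{2}(A+|B|)^{2}$, and finally use $1-|z||w|\ge \bigl||z|-|w|\bigr|$ to recover $\varrho(z,w)=||z|-|w||+|1-\langle z,w\rangle/(|z||w|)|$ up to an absolute constant (the low-modulus range, where $|z|$ or $|w|$ is small, is handled separately since there $|1-\langle z,w\rangle|\approx 1$ while $\varrho(z,w)\le 2$).

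Finally, the quasi-triangle inequality with $K=2$ together with the hypothesis $\varrho(w,z_{0})<\tfrac{1}{4}\varrho(z,z_{0})$ yields $\varrho(z,w)\approx \varrho(z,z_{0})$, while Lemma~\ref{le:Psedu-metric02} identifies
\[
v_{\alpha}\bigl(B^{\varrho}(w,\varrho(z,w))\bigr)\;\approx\;\varrho(z,w)^{n+1}\,\max\{\varrho(z,w),\,1-|w|\}^{\alpha}.
\]
Splitting $|1-\langle z,w\rangle|^{n+1+\alpha+|J|}=|1-\langle z,w\rangle|^{n+1+|J|}\cdot|1-\langle z,w\rangle|^{\alpha}$ and inserting the two lower bounds separately, everything assembles into
\[
\bigl|D^{J}_{z_{0}}K^{\alpha}(z,\cdot)(w)\bigr| \;\lesssim\; \frac{\varrho(z,w)^{(j_{3}+\cdots+j_{2n})/2}}{\varrho(z,w)^{n+1+|J|}\,\max\{\varrho(z,w),1-|w|\}^{\alpha}} \;=\; \frac{C}{\varrho(z,w)^{d(J)}\,v_{\alpha}\bigl(B^{\varrho}(w,\varrho(z,w))\bigr)},
\]
the last equality following from the identity $(n+1+|J|)-(j_{3}+\cdots+j_{2n})/2=d(J)+(n+1)$ built into the definition of $d(J)$. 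This completes the plan.
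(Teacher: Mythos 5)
Your plan follows essentially the same route as the paper's proof: the same explicit formula for $D^J_{z_0}\big(K^{\alpha}(z,\cdot)\big)(w)$ (a product of factors $\langle z,v_i(z_0)\rangle$, $\langle z,\tau_i(z_0)\rangle$ divided by $(1-\langle z,w\rangle)^{n+1+\alpha+|J|}$), the gain of $\varrho(z,z_0)^{1/2}\approx\varrho(z,w)^{1/2}$ for each tangential factor via Lemma \ref{le:Psedu-metric03} together with the comparability $\varrho(z,w)\approx\varrho(z,z_0)$ forced by $\varrho(w,z_0)<\frac14\varrho(z,z_0)$, the lower bounds $|1-\langle z,w\rangle|\gtrsim\varrho(z,w)$ and $|1-\langle z,w\rangle|\ge 1-|w|$, and Lemma \ref{le:Psedu-metric02} to identify $\varrho(z,w)^{n+1}\max\{\varrho(z,w),1-|w|\}^{\alpha}$ with $v_{\alpha}(B^{\varrho}(w,\varrho(z,w)))$. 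Your two deviations are mild and both sound: you bound the tangential factors directly from $\sum_{j\ge2}|z_j|^2\le 2\varrho(z,z_0)$, avoiding the paper's detour through $|z-z_0|\le\sqrt{\varrho(z,z_0)^2+2\varrho(z,z_0)}$ and its case split according to whether $\varrho(z,z_0)\le 1$; and you reprove $|1-\langle z,w\rangle|\gtrsim\varrho(z,w)$ by an elementary algebraic argument instead of invoking the first bullet of Lemma \ref{le:Psedu-metric03} (which requires the same unitary reduction you perform; that reduction to $z_0=(r_0,0,\ldots,0)$ is legitimate and is implicitly used by the paper as well).

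The one step that does not survive as written is the final assembly when $-1<\alpha<0$: splitting the exponent as $(n+1+|J|)+\alpha$ and ``inserting the two lower bounds separately'' raises the lower bound $|1-\langle z,w\rangle|\gtrsim\max\{\varrho(z,w),1-|w|\}$ to the negative power $\alpha$, which reverses the inequality, so your displayed chain only proves the lemma for $\alpha\ge 0$ while the statement covers all $\alpha>-1$. The repair is immediate with tools you already have: for $\alpha<0$ use only $|1-\langle z,w\rangle|^{n+1+\alpha+|J|}\gtrsim\varrho(z,w)^{n+1+\alpha+|J|}$ and then $\varrho(z,w)^{\alpha}\ge\max\{\varrho(z,w),1-|w|\}^{\alpha}$; alternatively, argue as the paper does, splitting into the cases $1-|w|\le\varrho(z,w)$ and $1-|w|>\varrho(z,w)$ and raising a single lower bound to the positive power $n+1+\alpha$. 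With that one-line fix (and the cosmetic correction that $\varrho\le 3$, not $2$, in your low-modulus discussion), your argument is complete and coincides in substance with the paper's.
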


\begin{proof}
Assuming that
\be
w-z_0 = \alpha_1 v_1 ( z_0 ) + \beta_1 \tau_1 ( z_0 ) + \cdots + \alpha_n v_n ( z_0 ) + \beta_n \tau_n ( z_0 ),
\ee
we have
\be\begin{split}
D^J_{z_{0}} & \big(K^{\alpha}(z,\cdot)\big)(w)\\
=& C_{n,\alpha,J}\frac{\langle z, v_1 (z_0) \rangle^{j_1}
\langle z, \tau_1 (z_0) \rangle^{j_2}
\cdots \langle z, v_n (z_0) \rangle^{j_{2n-1}} \langle z, \tau_n ( z_0) \rangle^{j_{2n}}
}{\big(1-\langle z,w\rangle\big)^{n+1+\alpha+j_1+j_2+j_3+\cdots+j_{2n}}}.
\end{split}\ee
We are going to prove that
\be\begin{split}
\left|\frac{\langle z, v_i ( z_0) \rangle^{j}}{\big(1-\langle z,w\rangle\big)^{j}}\right|\leq\frac{C_N}{\varrho(z,w)^{\frac{j}{2}}}
\end{split}\ee
and
\be\begin{split}
\left|\frac{\langle z, \tau_i ( z_0) \rangle^{j}}{\big(1-\langle z,w\rangle\big)^{j}}\right|\leq\frac{C_N}{\varrho(z,w)^{\frac{j}{2}}}
\end{split}\ee
for all $2\leq i \leq n$ and $0\leq j\leq N.$ These two type inequalities can be proved in a similar way. So, we only prove the second one.

To this end, notice that $\varrho(w,z_0)<\frac{1}{4}\varrho(z,z_0).$ Then
\begin{enumerate}[$\bullet$]

\item $\big \langle z, \tau_i ( z_0 ) \big \rangle = \big \langle z-z_{0}, \tau_i  ( z_0 ) \big\rangle;$

\item $2 [ \varrho(z,w)+\varrho(w,z_{0}) ] \geq\varrho(z,z_{0});$

\item $\varrho(z,w)\geq\frac{1}{2}\big(\varrho(z,z_{0})-2\varrho(w,z_{0})\big)>\frac{1}{4}\varrho(z,z_{0}).$

\end{enumerate}
By Lemma \ref{le:Psedu-metric03}, we have
\be
\left|1-\langle z,w\rangle\right|\geq\frac{1}{4}\varrho(z,w) \quad \text{and}\quad
|z-z_{0}| \leq \sqrt{\varrho(z,z_{0})^{2} + 2\varrho(z,z_{0})}.
\ee
Hence, if $\varrho(z,z_{0})\leq1$ then
\be\begin{split}
\left | \frac{\langle z, \tau_i ( z_0 ) \rangle^{j}}{\big(1-\langle z,w\rangle\big)^{j}} \right |
= & \left | \frac{\langle z-z_{0}, \tau_i ( z_0 ) \rangle^{j}}{\big(1-\langle z,w\rangle\big)^{j}} \right|
\leq \left | \frac{z-z_{0}}{1-\langle z,w\rangle} \right |^{j}\\
\lesssim & \frac{\varrho(z,z_{0})^{\frac{j}{2}}}{\varrho(z,w)^{j}} \lesssim\frac{1}{\varrho(z,w)^{\frac{j}{2}}}.
\end{split}\ee
If $\varrho(z,z_{0})>1,$ then
\be\begin{split}
\left | \frac{\langle z, \tau_i ( z_0 ) \rangle^{j}}{\big(1-\langle z,w\rangle\big)^{j}} \right |
= & \left | \frac{\langle z-z_{0}, \tau_i ( z_0 ) \rangle^{j}}{\big(1-\langle z,w\rangle\big)^{j}} \right |
\leq \left | \frac{z-z_{0}}{1-\langle z,w\rangle} \right |^{j}\\
\lesssim & \frac{\varrho(z,z_{0})^{j}}{\varrho(z,w)^{j}}
\leq C_N \frac{\varrho(z,z_{0})^{\frac{j}{2}}}{\varrho(z,w)^{\frac{j}{2}}}
\leq C_{N} \frac{1}{\varrho(z,w)^{\frac{j}{2}}},
\end{split}\ee
since $\varrho(z,z_{0}) \leq 3.$ Thus,
\be\begin{split}
|D^J_{z_{0}} & \big(K^{\alpha}(z,\cdot)\big)(w)|\\
\leq & C_{n,\alpha,J}\left|\frac{1}{\big(1-\langle z,w\rangle\big)^{n+1+\alpha+j_1+j_2}}\cdot
\frac{
\langle z, v_2 (z_0) \rangle^{j_3} \cdots
\langle z, \tau_n ( z_0)\rangle^{j_2n}
}{\big(1-\langle z,w\rangle\big)^{j_3+\cdots+j_{2n}}}\right|\\
\leq & \frac{C_{N,n,\alpha}}{\varrho(z,w)^{d(J)}\big|1-\langle z,w\rangle\big|^{n+1+\alpha}}.
\end{split}\ee

Now, if $1-|w|\leq \varrho(z,w),$ then by Lemma \ref{le:Psedu-metric02}, $v_{\alpha}(B^{\varrho}(w,\varrho(z,w)))\approx\varrho(z,w)^{n+1+\alpha}$ and
\be\begin{split}
|D^J_{z_{0}}\big(K^{\alpha}(z,\cdot)\big)(w)| & \lesssim \frac{C_{N,n,\alpha}}{\varrho(z,w)^{d(J)} \varrho(z,w)^{n+1+\alpha}}\\
& \lesssim \frac{C_{N,n,\alpha}}{\varrho(z,w)^{d(J)}v_{\alpha}(B^{\varrho}(w,\varrho(z,w)))}.
\end{split}\ee
If $1-|w|>\varrho(z,w),$ by Lemma \ref{le:Psedu-metric02} again we have $v_{\alpha}(B^{\varrho}(w,\varrho(z,w)))\approx\varrho(z,w)^{n+1}(1-|w|)^{\alpha}.$ Then
\be\begin{split}
|D^J_{z_{0}}\big(K^{\alpha}(z,\cdot)\big)(w)|& \leq \frac{C_{N,n,\alpha}}{\varrho(z,w)^{d(J)}(1-|w|)^{n+1+\alpha}}\\
& \leq \frac{C_{N,n,\alpha}}{\varrho(z,w)^{d(J)}\varrho(z,w)^{n+1}(1-|w|)^{\alpha}}\\
& \lesssim \frac{C_{N,n,\alpha}}{\varrho(z,w)^{d(J)}v_{\alpha}(B^{\varrho}(w,\varrho(z,w)))}.
\end{split}\ee
The proof is complete.
\end{proof}

Now we turn to prove Theorem \ref{th:p-atomBergman}.

\

{\it Proof of Theorem \ref{th:p-atomBergman}}.\; Let $0 < p \le 1$ and $\alpha > -1.$ Suppose $N \ge N_{p, \alpha}$ is an integer. The assertion (1) follows immediately from the conditions (1) and (2) of Definition \ref{df:p-atom}, while the assertion (2) implies the assertion (3). It remains to prove the assertion (2).

To this end, letting $a$ be a $(p, \8, N)_{\alpha}$-atom, we need to prove that
\be
\left\|P_{\alpha}(a)\right\|_{\alpha,p} \leq C,
\ee
where $C$ depends only on $p, n, \alpha $ and $N.$ If $\|a\|_{L^{\infty}}\leq1$, then
\be\begin{split}
\left\|P_{\alpha}(a)\right\|_{p,\alpha}^p & =\int_{\mathbb{B}^n}\left|\int_{\mathbb{B}^n}\frac{a(w)}{\left(1-\langle z,w\rangle\right)^{n+1+\alpha}}dv_{\alpha}(w)\right|^pdv_{\alpha}(z)\\
& \leq\left(\int_{\mathbb{B}^n}\left|\int_{\mathbb{B}^n}\frac{a(w)}{\left(1-\langle z,w\rangle\right)^{n+1+\alpha}}dv_{\alpha}(w)\right|^2dv_{\alpha}(z)\right)^{\frac{p}{2}}\\
&  \lesssim \|a\|_{2,\alpha}^p \leq 1,
\end{split}\ee
since $P_{\alpha}$ is a bounded operator on $L^2(\mathbb{B}_n,dv_{\alpha}).$

If $a$ is supported on $B^{\varrho}(z_{0},r_{0})$ with $z_0\in\mathbb{B}_n$ and $r_0 >0,$ then we have
\be\begin{split}
\int_{B^{\varrho}(z_{0},4 r_{0})}& \left|P_{\alpha}(a)(z)\right|^pdv_{\alpha}(z)\\
\leq & \left(\int_{B^{\varrho}(z_{0},4 r_{0})}\left|P_{\alpha}(a)(z) \right |^2 d v_{\alpha}(z)\right)^{\frac{p}{2}}
v_{\alpha}(B^{\varrho}(z_{0},4 r_{0}))^{1-\frac{p}{2}}\\
\lesssim & \left(\int_{\mathbb{B}_n}|a(z)|^2dv_{\alpha}(z)\right)^{\frac{p}{2}}v_{\alpha}(B^{\varrho}(z_{0},4 r_{0}))^{1-\frac{p}{2}}\\
\leq & v_{\alpha}(B^{\varrho}(z_{0},r_{0}))^{-1}v_{\alpha}(B^{\varrho}(z_{0},r_{0}))^{\frac{p}{2}}v_{\alpha}(B^{\varrho}(z_{0},4 r_{0}))^{1-\frac{p}{2}} \leq C.
\end{split}\ee
Next, we let $Q_k=\{z\in\mathbb{B}_n:\ 2^{k+1}  r_0 \leq \varrho(z,z_0) < 2^{k+2}  r_0\}.$ Then
\be\begin{split}
\int_{\mathbb{B}_n \setminus B^{\varrho}(z_{0},4r_{0})}& \left|P_{\alpha}(a)(z)\right|^pdv_{\alpha}(z)\\
= & \sum_{k=1}^{\infty}\int_{Q_k}\left|P_{\alpha}(a)(z)\right|^pdv_{\alpha}(z)\\
= & \sum_{k=1}^{\infty}\int_{Q_k}\left|\int_{\mathbb{B}_n}\frac{a(w)}{\left(1-\langle z,w\rangle\right)^{n+1+\alpha}}
dv_{\alpha}(w)\right|^pdv_{\alpha}(z)\\
\leq & \sum_{k=1}^{\infty}\int_{Q_k}\left(\|K^{\alpha}(z,\cdot)\|_{\mathcal{S}_N (B^{\varrho}(z_{0},r_{0}))}
v_{\alpha}(B^{\varrho}(z_{0},r_{0}))^{1-\frac{1}{p}}\right)^pdv_{\alpha}(z)\\
= & v_{\alpha}(B^{\varrho}(z_{0},r_{0}))^{p-1}\sum_{k=1}^{\infty}\int_{Q_k}\|K^{\alpha}(z,\cdot)\|_{\mathcal{S}_N (B^{\varrho}(z_{0},r_{0}))}^pdv_{\alpha}(z)
\end{split}\ee
in the above inequality we have used the condition (4) of Definition \ref{df:p-atom}. Notice that, for $z\in Q_k$ and $w\in B^{\varrho}(z_0,r_0),$
\be
\varrho(z,w)\geq\frac{1}{2}\big(\varrho(z,z_0)-2\varrho(w,z_0)\big)>2^{k}r_0-r_0>2^{k-1}r_0
\ee
and so
\be
B^{\varrho}(z_0,2^{k-1}r_0)\subset B^{\varrho}(w,4\varrho(z,w)).
\ee
Hence, by Lemma \ref{le:Kernel} one has, for $z\in Q_k,$
\be\begin{split}
\|K^{\alpha} & (z,\cdot)\|_{\mathcal{S}_N (B^{\varrho}(z_{0},r_{0}))}\\
= & \sum_{|J|= N}r_0^{d(J)}\left \|D^J_{z_{0}} \big(K^{\alpha}(z,\cdot)\big) \right\|_{L^{\infty}(B^{\varrho}(z_{0},r_{0}))}\\
\lesssim & \sum_{|J|= N}r_0^{d(J)}\sup_{w\in B^{\varrho}(z_0,r_0)}\frac{1}{\varrho(z,w)^{d(J)}v_{\alpha}(B^{\varrho}(w,\varrho(z,w)))}\\
\lesssim & \sum_{|J|= N}r_0^{d(J)}\sup_{w\in B^{\varrho}(z_0,r_0)}\frac{1}{(2^{k-1}r_0)^{d(J)}v_{\alpha}(B^{\varrho}(w,4\varrho(z,w)))}\\
\lesssim & \sum_{|J|= N}\frac{1}{(2^{k-1})^{d(J)}v_{\alpha}(B^{\varrho}(z_{0},2^{k-1}r_{0}))}\\
\end{split}\ee
Thus,
\be\begin{split}
\int_{\mathbb{B}_n\setminus B^{\varrho}(z_{0},4r_{0})}&  \left | P_{\alpha}(a)(z) \right |^p dv_{\alpha}(z)\\
\lesssim & v_{\alpha}(B^{\varrho}(z_{0},r_{0}))^{p-1} \sum_{k=1}^{\infty}\int_{Q_k}
\left(\sum_{|J|= N}\frac{1}{(2^{k-1})^{d(J)}v_{\alpha}(B^{\varrho}(z_{0},2^{k-1}r_{0}))}\right)^pdv_{\alpha}(z)\\
\lesssim & v_{\alpha}(B^{\varrho}(z_{0},r_{0}))^{p-1} \sum_{k=1}^{\infty} \sum_{|J|= N}\frac{v_{\alpha}(B^{\varrho}(z_{0},2^{k+1}r_{0}))}{(2^{k-1})^{pd(J)}v_{\alpha}(B^{\varrho}(z_{0},2^{k-1}r_{0}))^{p}}\\
\lesssim & \sum_{k=1}^{\infty}
\frac{v_{\alpha}(B^{\varrho}(z_{0},r_{0}))^{p-1}}{(2^{k-1})^{\frac{pN}{2}}v_{\alpha}(B^{\varrho}(z_{0},2^{k-1}r_{0}))^{p-1}}\\
\lesssim & \sum_{k: 2^{k-1} r_0 \le 1-|z_0|} 2^{- \frac{1}{2}(k-1)p [ N - 2 (n+1)(\frac{1}{p} -1)]}\\
& \quad + \sum^{\8}_{k= k_0:\; 2 > \frac{1-|z_0|}{2^{k_0-2} r_0} \ge 1} 2^{- \frac{1}{2} (k-k_0-1)p [ N - 2 (n+1+ \alpha) (\frac{1}{p} -1) ]}\\
\lesssim & C_{n,p, \alpha, N}
\end{split}\ee
since $N \ge N_{p, \alpha}.$ This completes the proof of Theorem \ref{th:p-atomBergman}.
\hfill$\Box$

\section{Atomic decomposition}\label{atomicdecomp}

This section is devoted to the proof of Theorem \ref{th:atomic-deco}. In section \ref{good-function} we will construct and estimate a collection of smooth bump functions. Sections \ref{AtomConstruction} and \ref{AtomicdecompPf} are devoted to present the construction of atomic decomposition.

\subsection{A partition of unity and good auxiliary functions}\label{good-function}

To prove Theorem \ref{th:atomic-deco}, we need to introduce a smooth partition of unit on any open set $\mathcal{O} \subsetneq \mathbb{B}_n.$ To this end, we present the \textquoteleft Whitney type covering lemma' in our case.

\begin{lemma}\label{le:Whitney}
Let $\mathcal{O} \subsetneq \mathbb{B}_n$ be an open set. Then there are a sequence of balls $\{B^\varrho (z_i,r_i)\}$ in $\mathbb{B}_n,$ positive constants $\mu>1>\nu> \lambda>0$ and $N_0$ depending only on $n$ such that
\begin{enumerate}[{\rm (1)}]

\item for any $i,$
\be
r_i = \frac{1}{2K} \varrho (z_i, \mathcal{O}^c)
\ee
where $K$ is the constant occurring in the quasi-triangular inequality \eqref{eq:quasi-triangularinequa} satisfied by the quasi-metric $\varrho;$

\item $\mathcal{O}=\bigcup_{i}B^\varrho (z_i, \nu r_i);$

\item for each $i,$ $B^\varrho (z_i, \mu r_i)\cap \mathcal{O}^{c}\neq\emptyset;$

\item the balls $B^\varrho (z_i, \lambda r_i)$ are pairwise disjoint;

\item no point in $\mathcal{O}$ lies in more than $N_{0}$ of the ball $B^\varrho (z_i,r_i).$

\end{enumerate}
\end{lemma}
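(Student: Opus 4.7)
I would adapt the classical Whitney covering construction to the quasi-metric setting, making essential use of the volume estimate in Lemma \ref{le:Psedu-metric02}. For each $z \in \mathcal{O}$ set $r(z) := (2K)^{-1}\varrho(z, \mathcal{O}^c) > 0$. The basic quantitative input is the comparability estimate coming from the quasi-triangle inequality applied to $\varrho(\cdot, \mathcal{O}^c)$: since $\varrho(z, \mathcal{O}^c) \le K\varrho(z, w) + K\varrho(w, \mathcal{O}^c)$, one obtains $r(z) \le \tfrac{1}{2}\varrho(z,w) + K r(w)$ and the symmetric estimate. Consequently, whenever $\varrho(z,w) \le \lambda \max\{r(z), r(w)\}$ with $\lambda$ sufficiently small, $r(z)$ and $r(w)$ are comparable with constants depending only on $K$.

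Next, by a Vitali-type greedy selection applied to the family $\{B^\varrho(z, \lambda r(z)) : z \in \mathcal{O}\}$, I extract a countable maximal subset $\{z_i\} \subset \mathcal{O}$ such that the balls $B^\varrho(z_i, \lambda r_i)$ with $r_i = r(z_i)$ are pairwise disjoint. This gives property (4) directly. Property (1) is the definition of $r_i$, and property (3) follows from $\varrho(z_i, \mathcal{O}^c) = 2K r_i$: any $\mu > 2K$ forces $B^\varrho(z_i, \mu r_i) \cap \mathcal{O}^c \neq \emptyset$. For property (2), fix $z \in \mathcal{O}$; by maximality of the disjoint family, $B^\varrho(z, \lambda r(z))$ meets some $B^\varrho(z_i, \lambda r_i)$, and a double application of the quasi-triangle inequality combined with the comparability of $r(z)$ and $r_i$ yields $\varrho(z, z_i) < \nu r_i$ for some $\nu < 1$, provided $\lambda$ has been chosen small enough in terms of $K = 2$.

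For the bounded-overlap property (5), suppose $z$ lies in $N$ of the balls $B^\varrho(z_{i_k}, r_{i_k})$. The comparability estimate forces all $r_{i_k}$ to be comparable to $r(z)$ and all $z_{i_k}$ to lie inside a single pseudo-ball of radius comparable to $r(z)$ centered at $z$. The pairwise disjoint balls $B^\varrho(z_{i_k}, \lambda r_{i_k})$ then all sit inside a common pseudo-ball of radius $\approx r(z)$; summing their measures and applying the two-sided volume estimate of Lemma \ref{le:Psedu-metric02} bounds $N$ by a constant $N_0$. In this regime all radii are simultaneously on the same side of $1-|z_{i_k}|$ up to a fixed ratio, so the $\alpha$-dependent factor $[\max(r, 1-|z|)]^\alpha$ cancels in the ratio and $N_0$ depends only on $n$.

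The main obstacle is the quantitative calibration of the four constants: $\lambda$ must be chosen small enough that the intersection argument in step (2) produces a genuine $\nu < 1$, yet not so small that the overlap count in (5) degenerates. Because $K = 2 > 1$, the estimates are less clean than in the metric case, and one must carefully track the interplay between the quasi-triangle constant and the comparability constants for $r(\cdot)$ in order to fix $\lambda$, $\nu$, $\mu$, and $N_0$ explicitly in terms of $n$ alone.
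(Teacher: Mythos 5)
Your plan is correct, and it is precisely the standard Whitney--Vitali construction for spaces of homogeneous type: the paper itself gives no argument for this lemma, deferring entirely to \cite[Lemma 2.4]{Tch2008}, whose proof runs along the same lines (define $r(z)=(2K)^{-1}\varrho(z,\mathcal{O}^c)$, take a maximal disjoint subfamily of the balls $B^{\varrho}(z,\lambda r(z))$, and use the quasi-triangle inequality plus doubling to get the covering, comparability and finite-overlap properties). One small repair in your step (5): the factor $[\max(r,1-|z|)]^{\alpha}$ does not literally cancel in the ratio of volumes --- it is only controlled up to a constant depending on $\alpha$ --- so to obtain $N_0$ depending only on $n$ you should run the disjointness/volume count with the unweighted measure $v$ (the case $\alpha=0$ of Lemma \ref{le:Psedu-metric02}), which is legitimate because the disjointness of the balls $B^{\varrho}(z_{i_k},\lambda r_{i_k})$ and their inclusion in a ball of radius comparable to $r(z)$ are measure-independent statements.
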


\begin{proof}
See \cite[Lemma 2.4]{Tch2008} for the details.
\end{proof}

Given $z \in \mathbb{B}_n,$ recall that the space of smooth bump functions at $z$ for $\delta >0$ and $L \ge 0$ is denoted by $\mathcal{G}^{L}_{\delta}(z)$ consisting of all functions $g\in C^{\infty}(\mathbb{B}_n)$ for which there exist $z_{0}\in\mathbb{B}_n$ and $r_0 >0$ such that
\be
\mathrm{supp} g \subset B^{\varrho}(z_{0},r_{0}),\quad \varrho(z,z_{0})<\delta r_{0}, \quad \text{and}\; \|g\|_{L,z_{0},r_{0}}\leq 1,
\ee
where
\be
\|g\|_{L,z_{0},r_{0}}=v_{\alpha}(B^{\varrho}(z_0,r_0)) \sup_{|J|\le L}r_{0}^{d(J)} \|D^{J}_{z_0}g\|_{L^{\infty}(B^{\varrho}(z_0,r_0))}.
\ee

\begin{lemma}\label{le:Partition}
Let $\mathcal{O} \subsetneq \mathbb{B}_n $ be an open subset. Then there exist a collection of balls $B^\varrho (z_i,r_i),$ a sequence of functions $\varphi_{i}\in C^{\infty}(\mathbb{B}_n)$ {\rm ($i=1,2,\cdots$)}, and a constant $\mu>1$ depending only on $n,$ such that
\begin{enumerate}[{\rm (1)}]

\item $0\leq\varphi_{i}\leq1;$

\item $\mathrm{supp} \varphi_{i}\subset B^\varrho (z_{i},r_{i});$

\item $\sum_{i=1}^{\infty}\varphi_{i}=\chi_{\mathcal{O}};$

\item for any nonnegative integer $L$ there is a constant $c_{L}>0$ depending only on $L$ and $n$ such that for each $i$ and any $w_{i}\in B^\varrho (z_i, \mu r_i)\cap \mathcal{O}^{c},$
\be
\frac{c_{L}}{\|\varphi_{i}\|_{1,\alpha}} \varphi_{i} \in \mathcal{G}^{L}_{\mu}(w_{i}).
\ee
\end{enumerate}
\end{lemma}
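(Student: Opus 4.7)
The plan is to combine the Whitney-type covering already supplied by Lemma~\ref{le:Whitney} with an explicit anisotropic cutoff construction and the standard normalisation. First, apply Lemma~\ref{le:Whitney} to $\mathcal{O}$ to obtain balls $B^\varrho(z_i, r_i)$ with $r_i = \frac{1}{2K}\varrho(z_i, \mathcal{O}^c)$, covering $\mathcal{O}$ by the dilates $B^\varrho(z_i, \nu r_i)$, having bounded overlap at most $N_0$, and with each $B^\varrho(z_i, \mu r_i)$ meeting $\mathcal{O}^c$. From the definition of $r_i$ and the quasi-triangle inequality one also gets the standard Whitney comparability $r_i \thickapprox r_j$ whenever $B^\varrho(z_i, r_i) \cap B^\varrho(z_j, r_j) \neq \emptyset$.

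The main technical step is to construct, for each $i$, a smooth bump $\widetilde\varphi_i \in C_c^\infty(B^\varrho(z_i, r_i))$ with $0 \le \widetilde\varphi_i \le 1$, $\widetilde\varphi_i \equiv 1$ on $B^\varrho(z_i, \nu r_i)$, and the anisotropic derivative bound
\begin{equation*}
r_i^{d(J)} \, \|D^J_{z_i} \widetilde\varphi_i\|_{L^\infty(\mathbb{B}_n)} \le C_L, \qquad |J| \le L.
\end{equation*}
I would pass to the $\tau$-extremal coordinates $(\alpha_1, \beta_1, \ldots, \alpha_n, \beta_n)$ centred at $z_i$, and, by a unitary reduction to the normal form $z_i = (|z_i|, 0, \ldots, 0)$, invoke Lemma~\ref{le:Psedu-metric03} to see that $B^\varrho(z_i, r_i)$ is sandwiched, up to universal constants, between non-isotropic boxes in which $|\alpha_1|, |\beta_1| \lesssim r_i$ and $|\alpha_k|, |\beta_k| \lesssim \sqrt{r_i}$ for $k \ge 2$. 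Fixing a scalar bump $\phi \in C_c^\infty(\mathbb{R})$ with $0 \le \phi \le 1$ and $\phi \equiv 1$ near the origin, set
\begin{equation*}
\widetilde\varphi_i(w) = \phi\!\left(\tfrac{\alpha_1}{r_i}\right)\phi\!\left(\tfrac{\beta_1}{r_i}\right)\prod_{k=2}^n \phi\!\left(\tfrac{\alpha_k}{\sqrt{r_i}}\right)\phi\!\left(\tfrac{\beta_k}{\sqrt{r_i}}\right).
\end{equation*}
A differentiation in the $(\alpha_1, \beta_1)$-variables costs a factor $r_i^{-1}$ and one in $(\alpha_k, \beta_k)$ for $k \ge 2$ costs $r_i^{-1/2}$, so the chain rule produces precisely the weight $r_i^{-d(J)}$ required.

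Next, form the partition of unity by setting $\varphi_i = \widetilde\varphi_i / \sum_j \widetilde\varphi_j$ on $\mathcal{O}$ and extending by zero. The denominator is $\ge 1$ on $\mathcal{O}$ (because the $B^\varrho(z_j, \nu r_j)$ cover $\mathcal{O}$ and $\widetilde\varphi_j \equiv 1$ there) and $\le N_0$ everywhere by finite overlap. Combining finite overlap with $r_i \thickapprox r_j$ for intersecting balls gives $r_i^{d(J)} \|D^J_{z_i}(\sum_j \widetilde\varphi_j)\|_{L^\infty} \le C_L$ on $\mathrm{supp}\,\widetilde\varphi_i$, so that by Leibniz and the quotient rule
\begin{equation*}
r_i^{d(J)} \, \|D^J_{z_i} \varphi_i\|_{L^\infty(\mathbb{B}_n)} \le C'_L, \qquad |J| \le L,
\end{equation*}
and (1)--(3) are immediate. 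For (4), since $\widetilde\varphi_i \equiv 1$ on $B^\varrho(z_i, \nu r_i)$, Lemma~\ref{le:Psedu-metric02} gives $\|\varphi_i\|_{1, \alpha} \thickapprox v_\alpha(B^\varrho(z_i, r_i))$. Given $w_i \in B^\varrho(z_i, \mu r_i) \cap \mathcal{O}^c$, take $z_0 = z_i$ and $r_0 = r_i$ in the definition of $\mathcal{G}^L_\mu(w_i)$: the support condition and $\varrho(w_i, z_i) < \mu r_i$ are built in, while
\begin{equation*}
\left\| \frac{c_L \, \varphi_i}{\|\varphi_i\|_{1, \alpha}} \right\|_{L, z_i, r_i} \le 1
\end{equation*}
follows from the derivative bound on $\varphi_i$ once $c_L$ is taken small depending only on $L$ and $n$.

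The principal obstacle is the anisotropic bump construction: one must verify that the tensor-product $\widetilde\varphi_i$ written in $\tau$-extremal coordinates really cuts off $B^\varrho(z_i, r_i)$ and that the chain rule produces derivative weights $r_i^{-d(J)}$ in exact agreement with the anisotropy of $\varrho$. Lemma~\ref{le:Psedu-metric03} is the crucial input; once the bump is in hand, the rest is routine Whitney partition-of-unity machinery.
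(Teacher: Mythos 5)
Your overall route coincides with the paper's: take the Whitney covering of Lemma \ref{le:Whitney}, build for each ball a bump $\psi_i$ supported in $B^\varrho(z_i,r_i)$, identically $1$ on $B^\varrho(z_i,\nu r_i)$, with $\|D^J_{z_i}\psi_i\|_{L^\infty}\le c_J r_i^{-d(J)}$ for $|J|\le L$, then normalize by $\sum_j\psi_j\in[1,N_0]$ and verify (4) from $\|\varphi_i\|_{1,\alpha}\approx v_\alpha(B^\varrho(z_i,r_i))$; the paper simply cites \cite[p.~83]{KL1995} for the existence of such $\psi_i$, whereas you construct it explicitly. The gap sits exactly in that explicit construction, at the step you yourself single out. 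Lemma \ref{le:Psedu-metric03} gives only the inclusion of $B^\varrho(z_i,r)$ \emph{into} the anisotropic box $\{|\alpha_1|,|\beta_1|\lesssim r,\ |\alpha_k|,|\beta_k|\lesssim\sqrt r\ (k\ge2)\}$, not the reverse inclusion, and the reverse inclusion with universal constants is false: by definition $\varrho(0,w)=|w|$, so $\varrho$-balls centred near the origin are essentially Euclidean, and more quantitatively, for $z_0=(r_0,0,\dots,0)$ and $z=(r_0,t,0,\dots,0)$ one computes $\varrho(z,z_0)=\big(\sqrt{r_0^2+t^2}-r_0\big)+\big(1-r_0/\sqrt{r_0^2+t^2}\big)\approx t^2/r_0^2$ for $t\le r_0\le1$, so the complex-tangential extent of $B^\varrho(z_0,r)$ is of order $r_0\sqrt r+r$, not $\sqrt r$. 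Nothing in Lemma \ref{le:Whitney} prevents Whitney balls with $|z_i|$ and $r_i$ both small (take $\mathcal O$ a small ball about the origin), and for such balls your tensor bump, whose support has tangential width $\sim\sqrt{r_i}$, is not contained in $B^\varrho(z_i,r_i)$; condition (2), and with it the support requirement in the definition of $\mathcal G^L_\mu(w_i)$, fails. Shrinking the tangential scale to $\sim|z_i|\sqrt{r_i}+r_i$ restores the support condition but then first-order tangential derivatives are of size $(|z_i|\sqrt{r_i}+r_i)^{-1}\gg r_i^{-1/2}$, so the normalization $\|\cdot\|_{L,z_i,r_i}\le1$ is lost; the problem is therefore not one of adjusting constants.

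To be fair, the paper hides the same difficulty inside its citation: the construction in \cite{KL1995} is tailored to nonisotropic balls at the boundary of a strongly pseudoconvex domain, where the tangential extent genuinely is $\sqrt r$. In that regime --- Whitney balls whose centres satisfy $|z_i|\ge c_0$ for a fixed $c_0>0$, equivalently balls close to $\mathbb S_n$ --- your bump does cut off $B^\varrho(z_i,r_i)$ correctly (the needed two-sided box comparison can then be checked directly, with constants depending on $c_0$), and the remaining bookkeeping (Leibniz and quotient rule, $r_i\approx r_j$ for intersecting balls, $\|\varphi_i\|_{1,\alpha}\approx v_\alpha(B^\varrho(z_i,r_i))$ via Lemma \ref{le:Psedu-metric02}, and the verification of (4)) matches the paper's proof. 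But as written, the assertion that $B^\varrho(z_i,r_i)$ is ``sandwiched up to universal constants'' between two such boxes is unproved and false in general, so your construction needs either a restriction to near-boundary balls together with a separate, isotropic treatment of the central balls, or some other repair; as it stands this is a genuine gap.
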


\begin{proof}
Let $B^\varrho_i = B^\varrho (z_i, r_i)$ be given by Lemma \ref{le:Whitney}. For each $i$ we let $\psi_{i}$ be a smooth function satisfying the following conditions:
\begin{enumerate}[$\bullet$]

\item $\psi_{i}$ is supported on the ball $B^{\varrho} ( z_i, r_i );$

\item $0\leq\psi_{i} \leq1,$ and $\psi_{i}\equiv 1$ on $B^d (z_{i},\nu r_{i});$

\item $\|D^{J}_{z_{i}}\psi_{i}\|_{L^{\infty}(B^{\varrho}(z_i, r_i))} \leq c_{J}r_{i}^{-d(J)}$ ($|J|\leq L$).

\end{enumerate}
(See e.g. \cite[p.83]{KL1995} for the construction of $\psi_i.$) Now set
\be
\varphi_{i}(z)=\frac{\psi_{i}(z)}{\sum_{i=1}^{\infty}\psi_{i}(z)},\ \ z\in\mathbb{B}_{n}.
\ee
Note that $1\leq\sum_{i=1}^{\infty}\psi_{i}(z)\leq N_{0},$ the conditions (1), (2), and (3) are clearly satisfied. It remains to check the condition (4). Indeed, by the condition (3) in Lemma \ref{le:Whitney}, for each $i$ we have $B^\varrho (z_i, \mu r_i) \cap \mathcal{O}^{c} \neq \emptyset.$ For any $w_{i} \in B^\varrho (z_i, \mu r_i)\cap\mathcal{O}^{c},$ it is obvious that
\begin{enumerate}[{\rm $\bullet$}]

\item $\varrho (z_i,w_i) < \mu r_i,$

\item $\| D^{J}_{z_i}\varphi_{i} \|_{L^{\infty} (B^{\varrho}(z_{i}, r_{i}))} \leq c_{J} r_{i}^{-d(J)},$

\item $\|\varphi_{i}\|_{1,\alpha} \approx \|\psi_{i}\|_{1,\alpha}\approx v_{\alpha}(B^{\varrho}(z_{i}, r_i)).$
\end{enumerate}
Hence, we have
\be
v_{\alpha}(B^{\varrho}(z_{i},  r_{i})) \sup_{|J|\leq2L} r_{i}^{d(J)}\|D^{J}_{z_i}\varphi_{i}\|_{L^{\infty}(B^{\varrho}(z_{i}, r_{i}))}\leq c_{L}^{-1}\|\varphi_{i}\|_{1,\alpha}.
\ee
Therefore, $\frac{c_{L}}{\|\varphi_{i}\|_{1,\alpha}} \varphi_{i} \in \mathcal{G}^{L}_{\mu}(w_{i}).$
\end{proof}

Let $f\in\mathcal{A}_{\alpha}^{p}$ and $\mu>1$ be the constant appearing in Lemma \ref{le:Whitney}. Given an integer $N \ge N_{p, \alpha},$ let $L \ge \max \big \{ N, \big [ \frac{1}{p} (n+1+ \alpha)\big ] + 1 \big \}$ be an integer. By Lemmas \ref{le:non-tangent} and \ref{le:grand}, we have
\be
\mathcal{K}_{\mu,L}(f)+f^{\star}_{\delta}\in L^p (\mathbb{B}_n, dv_{\alpha} ).
\ee
Let $k_{0}$ be the least integer such that
\be
\left \|\mathcal{K}_{\mu,L}(f)+f^{\star}_{\delta}\right\|_{L^p_{\alpha} (\mathbb{B}_n)} \leq 2^{k_{0}}.
\ee
For any nonnegative integer $k,$ we define
\be
\mathcal{O}_{k}=\left\{z\in\mathbb{B}^n:\; \mathcal{K}_{\mu,L}(f)(z)+f^{\star}_{\delta}(z)>2^{k_{0}+k}\right\}.
\ee
Then $\mathcal{O}_k \subsetneq \mathbb{B}_n$ for any $k =0, 1, \ldots .$ For each $k$ we fix the Whitney type covering $\{B^\varrho (z^k_{i}, r^k_{i})\}_{i=1}^{\infty}$ and the partition of unity $\{\varphi^{k}_{i}\}$ with respect to $\mathcal{O}_{k},$ as constructed in Lemma \ref{le:Partition}.

For each $i$ and $k,$ we denote by $L^{2}_{\alpha, \varphi_{i}^k}(\mathbb{B}_n)$ the $L^2$-space with respect to the probability measure
\be
d v_{\alpha,\varphi_{i}^{k}}: = \frac{\varphi_{i}^{k}}{\|\varphi_{i}^{k}\|_{1,\alpha}}dv_{\alpha}.
\ee
The norm on this space will be denoted by $\|\cdot\|_{\alpha,\varphi_{i}^{k}}$. Then we define a subspace $V^L_{\varphi_{i}^{k}}(z_{i}^{k})$ of $L^{2}_{\alpha, \varphi_{i}^k}(\mathbb{B}_n)$ consisting of \textquoteleft polynomials' of the form
\be
P(z)=\sum_{|J|\leq L}c_{J}\Theta(z_{i}^{k},z)^{J},
\ee
where
\be
\Theta(z_{i}^{k},z)^{J} = \alpha_1^{j_1} \beta_1^{j_2}\cdots \alpha_i^{j_{2i-1}} \beta_i^{j_{2 i}} \cdots \alpha_n^{j_{2n-1}} \beta_n^{j_{2 n}}
\ee
when $\Theta(z_{i}^{k},z) = (\alpha_1, \beta_1, \ldots , \alpha_n, \beta_n)$ and $J = (j_1, j_2, \ldots , j_{2n-1}, j_{2n}).$ It is clear that $V^L_{\varphi_{i}^{k}}(z_{i}^{k})$ is a finite-dimensional Hilbert space.

Let $\pi_{J}(z)$ ($|J|\leq L$) be an orthonormal basis for $V^L_{\varphi_{i}^{k}}(z_{i}^{k}).$

\begin{lemma}\label{le:good-function3}
Let $L$ be a nonnegative integer. Then there is a constant $c_L>0$ depending only on $L$ and $n$ such that
\beq\label{eq:good-function3}
\frac{c_{L}}{\|\varphi_{i}^{k}\|_{1,\alpha}} \pi_{J} \varphi_{i}^{k} \in \mathcal{G}^{L}_{\mu}(w_{i}^{k}),
\eeq
for all $ w_{i}^{k}\in B^\varrho (z_{i}^{k}, \mu r_{i}^{k})\cap\mathcal{O}_{k}^{c}.$
\end{lemma}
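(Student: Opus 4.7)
The plan is to verify the three defining conditions of $\mathcal{G}^L_\mu(w_i^k)$ for $g := \frac{c_L}{\|\varphi_i^k\|_{1,\alpha}}\pi_J\varphi_i^k$, taking $z_0 = z_i^k$ and $r_0 = r_i^k$. The support condition and the inequality $\varrho(w_i^k, z_i^k) < \mu r_i^k$ are immediate from the Whitney construction of $\varphi_i^k$ and the hypothesis on $w_i^k$. Using the Leibniz rule, the derivative bounds $\|D^{K}_{z_i^k}\varphi_i^k\|_\infty \lesssim (r_i^k)^{-d(K)}$ established inside the proof of Lemma~\ref{le:Partition}, and the equivalence $\|\varphi_i^k\|_{1,\alpha}\approx v_\alpha(B^\varrho(z_i^k, r_i^k))$, the norm estimate $\|g\|_{L,z_i^k,r_i^k}\le 1$ reduces to the polynomial bound
\[
\sup_{|K|\le L}(r_i^k)^{d(K)}\|D^K_{z_i^k}\pi_J\|_{L^\infty(B^\varrho(z_i^k, r_i^k))}\le C,
\]
with $C$ depending only on $L$, $n$, and $\alpha$; the constant $c_L$ in the statement is then chosen to absorb $C$.

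The key step is a rescaling argument. Introduce the anisotropically normalized monomials $\tilde\Theta^K := (r_i^k)^{-d(K)}\Theta(z_i^k,\cdot)^K$. Using Lemma~\ref{le:Psedu-metric03} after a unitary rotation taking $z_i^k$ to $(|z_i^k|,0,\ldots,0)$, one verifies that on $B^\varrho(z_i^k, r_i^k)$ the local coordinates satisfy $|\alpha_1|,|\beta_1|\lesssim r_i^k$ and $|\alpha_j|,|\beta_j|\lesssim\sqrt{r_i^k}$ for $j\ge 2$, whence $\|\tilde\Theta^K\|_\infty\lesssim 1$ and $\|D^{K'}_{z_i^k}\tilde\Theta^K\|_\infty\lesssim (r_i^k)^{-d(K')}$. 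Expanding $\pi_J=\sum_{|K|\le L}\tilde a_{J,K}\,\tilde\Theta^K$, the desired polynomial bound follows once we show $|\tilde a_{J,K}|\lesssim 1$ uniformly in $i, k$. These coefficients are determined by the Gram matrix $\tilde G_{K,K'}:=\int\tilde\Theta^K\tilde\Theta^{K'}\,dv_{\alpha,\varphi_i^k}$, whose entries are uniformly bounded above since $dv_{\alpha,\varphi_i^k}$ is a probability measure and $|\tilde\Theta^K|\lesssim 1$. Orthonormality of $\{\pi_J\}$ together with a uniform lower bound on $\tilde G$ then forces $|\tilde a_{J,K}|\lesssim 1$, as required.

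The main obstacle is this uniform positive-definite lower bound on $\tilde G$. To establish it, I would exploit that by construction $\varphi_i^k\ge 1/N_0$ on the sub-ball $B^\varrho(z_i^k,\nu r_i^k)$, and that by Lemma~\ref{le:Psedu-metric02} one has $v_\alpha(B^\varrho(z_i^k,\nu r_i^k))\approx v_\alpha(B^\varrho(z_i^k,r_i^k))$. After the anisotropic rescaling (by $r_i^k$ transversally and $\sqrt{r_i^k}$ tangentially), and normalizing $v_\alpha$ by $v_\alpha(B^\varrho(z_i^k,r_i^k))$, the pullback of $dv_{\alpha,\varphi_i^k}$ dominates a fixed positive multiple of Lebesgue measure on a fixed compact subset of the unit Koranyi box, determined only by $n$. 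On any such set the $L^2$-norm is equivalent to the $L^\infty$-norm on the finite-dimensional space of polynomials of degree $\le L$ in $\mathbb{R}^{2n}$, with a constant depending only on $L$ and $n$; this yields the lower bound on $\tilde G$, hence on $|\tilde a_{J,K}|$, and completes the proof.
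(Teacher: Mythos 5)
Your proposal is correct in substance and its analytic core coincides with the paper's: both arguments reduce, via the Leibniz rule, the derivative bounds on $\varphi_i^k$ from Lemma \ref{le:Partition} and $\|\varphi_i^k\|_{1,\alpha}\approx v_\alpha(B^\varrho(z_i^k,r_i^k))$, to showing that the coefficients of $\pi_J$ in the monomial basis $\Theta(z_i^k,\cdot)^K$ are $O((r_i^k)^{-d(K)})$, and both obtain this from the anisotropic coordinate bounds of Lemma \ref{le:Psedu-metric03} together with a uniform lower bound for the $L^2(dv_{\alpha,\varphi_i^k})$-norm of polynomials after the rescaling $(\alpha_1,\beta_1,\alpha_j,\beta_j)\mapsto(\alpha_1/r_i^k,\beta_1/r_i^k,\alpha_j/\sqrt{r_i^k},\beta_j/\sqrt{r_i^k})$. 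Where you differ is the packaging: the paper runs a Gram--Schmidt induction over a linear order on multi-indices, bounding the numerator and denominator of (\ref{eq:good-function-equality1}) separately, whereas you bypass the induction by a single linear-algebra step, namely $\tilde a^*\tilde G\tilde a=\|\pi_J\|^2_{\alpha,\varphi_i^k}=1$ together with a uniform positive lower bound $\tilde G\ge\epsilon I$ for the Gram matrix of the rescaled monomials; this is cleaner (and the upper bound on the entries of $\tilde G$ is not even needed). One caution: the step you label as the ``main obstacle'' and then assert --- that the rescaled, normalized measure dominates Lebesgue measure on a fixed set --- is precisely where the paper works hardest. The ingredients you list ($\varphi_i^k\ge N_0^{-1}$ on $B^\varrho(z_i^k,\nu r_i^k)$ and comparability of the volumes) do not by themselves give the required pointwise density bound; one needs the estimate (\ref{eq:good-function-inequality2}), i.e.\ $(1-|z|)^\alpha/v_\alpha(B^\varrho(z_i^k,r_i^k))\gtrsim (r_i^k)^{-(n+1)}$ on a strictly smaller sub-ball such as $\{\varrho(z_i^k,\cdot)<\tfrac14\nu r_i^k\}$, whose proof splits into the cases $\alpha\ge0$ and $-1<\alpha<0$ and, in the first case, uses the Whitney fact $1-|z_i^k|>\nu r_i^k$ to keep $1-|z|\gtrsim\max(r_i^k,1-|z_i^k|)$; note also that on the full ball $B^\varrho(z_i^k,\nu r_i^k)$ this lower bound can degenerate, so the shrinking is not optional. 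With that estimate supplied (and the same identification of the rescaled sub-ball with a fixed region that the paper itself uses), your finite-dimensional norm-equivalence argument closes the proof, so this is a gap of detail rather than of method.
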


\begin{proof}
In order to prove \eqref{eq:good-function3}, for simplicity, by replacing $\Theta(z_{i}^{k},z)^{J}$ with $\Theta(z)^{J},$ we let
\be
\pi_{J}(z)=\sum_{|I|\leq|J|\leq L}a_{J,I}\Theta(z)^{I}.
\ee
Then we will prove that
\beq\label{eq:good-function-inequality1}
|a_{J,I}|\leq c_{J}(r_{i}^{k})^{-d(I)},\ \ |I|\leq |J|\leq L,
\eeq
by mathematical induction, where the positive constant $c_J$ depending only on $J$ and $n.$

To this end, we introduce a linear order $\prec$ on the multi-indices set $\{J: |J| \leq L\}$ such that $|I|<|J|$ implies $I \prec J.$ Note that the orthonormal basis $\pi_{J}$ can be constructed by the Gram-Schmidt process beginning with $\pi_{0} = 1,$ and then
\beq\label{eq:good-function-equality1}
\pi_{J}(z)=\frac{\Theta(z)^{J}-\sum_{I\prec J}\pi_{I}(z)\int\Theta(w)^{J}\pi_{I}(w)dv_{\alpha,\varphi_{i}^{k}}(w)}{\left\|\Theta(z)^{J}-\sum_{I\prec J}\pi_{I}(z)\int\Theta(w)^{J}\pi_{I}(w)dv_{\alpha,\varphi_{i}^{k}}(w)\right\|_{\alpha,\varphi_{i}^{k}}}.
\eeq
Using mathematical induction, we assume that if $O \preceq I \prec J$ then
\be
|a_{I,O}|\leq c_{I}(r_{i}^{k})^{-d(O)}.
\ee
Because, for any $O\preceq I$ we have $\left|\Theta(z)^{O}\varphi_{i}^{k}(z)\right|\leq(r_{i}^{k})^{d(O)}$, it follows that $\pi_{I}(z)\varphi_{i}^{k}\leq c_{I}.$ Therefore, in the numerator of (\ref{eq:good-function-equality1}), the coefficient of $\Theta(z)^{I}$ ($I\preceq J$) is dominated by $c_{J}(r_{i}^{k})^{d(J)-d(I)}.$

In the following, we shall estimate the denominator of (\ref{eq:good-function-equality1}). Recalling the constant $\nu$ in Lemma \ref{le:Whitney}, we claim that there exists a constant $C=C(\nu, n)$ such that
\beq\label{eq:good-function-inequality2}
\frac{(1-|z|)^{\alpha}}{v_{\alpha}(B^{\varrho}(z_{i}^{k},r_{i}^{k}))}\gtrsim\frac{C}{(r_{i}^{k})^{n+1}},
\eeq
for all $z \in \{w \in\mathbb{B}_n:\; \varrho (z_{i}^{k},w)<\frac{1}{4}\nu r_{i}^{k}\}.$

First of all, we note that
\be
1-|z_{i}^{k}| = \varrho \big ( \frac{z^k_i}{|z^k_i|}, z^k_i \big )> \nu r_{i}^{k}.
\ee
Then, the proof of (\ref{eq:good-function-inequality2}) can be divided into two cases: $0\leq\alpha<\infty$ and $-1<\alpha<0.$

{\it $\bullet$\; Case $0\leq\alpha<\infty$}.\; Suppose $z\in\{w \in\mathbb{B}_n: \; \varrho (z_{i}^{k},w)<\frac{1}{4} \nu r_{i}^{k}\}.$ If $r_{i}^{k}\geq1-|z_{i}^{k}|,$ we have
\be\begin{split}
1-|z|&=1-|z_{i}^{k}|+|z_{i}^{k}|-|z| \ge  \nu r_{i}^{k}-\varrho(z_{i}^{k},z)
\ge \frac{1}{2} \nu r_{i}^{k},
\end{split}\ee
therefore,
\be
\frac{(1-|z|)^{\alpha}}{v_{\alpha}(B^{\varrho}(z_{i}^{k},r_{i}^{k}))}\gtrsim\frac{C (r_{i}^{k})^{\alpha}}{(r_{i}^{k})^{n+1+\alpha}}\geq\frac{C}{(r_{i}^{k})^{n+1}};
\ee
if $r_{i}^{k}<1-|z_{i}^{k}|,$ we also have
\be\begin{split}
1-|z|&=1-|z_{i}^{k}|+|z_{i}^{k}|-|z| \ge 1-|z_{i}^{k}|-\varrho(z_{i}^{k},z) \ge 1-|z_{i}^{k}|-\frac{1}{4} \nu r_{i}^{k}\\
& \ge 1-|z_{i}^{k}|-\frac{1}{4} \nu (1-|z_{i}^{k}|) \ge \Big ( 1-\frac{1}{4}\nu \Big )(1-|z_{i}^{k}|),
\end{split}\ee
hence,
\be
\frac{(1-|z|)^{\alpha}}{v_{\alpha}(B^{\varrho}(z_{i}^{k},r_{i}^{k}))}\gtrsim\frac{C (1-|z_{i}^{k}|)^{\alpha}}{(r_{i}^{k})^{n+1}(1-|z_{i}^{k}|)^{\alpha}}\geq\frac{C}{(r_{i}^{k})^{n+1}}.
\ee

{\it $\bullet$\; Case $-1 < \alpha <0$}.\; Let $z\in\{w\in\mathbb{B}_n: \; \varrho (z_{i}^{k},w)<\frac{1}{4} \nu r_{i}^{k}\}.$ If $r_{i}^{k}\geq1-|z_{i}^{k}|,$ we have
\be\begin{split}
1-|z| \le 1-|z_{i}^{k}|+ \varrho (z_{i}^{k}, z) \lesssim r_{i}^{k},
\end{split}\ee
therefore,
\be
\frac{(1-|z|)^{\alpha}}{v_{\alpha}(B^{\varrho}(z_{i}^{k},r_{i}^{k}))}\gtrsim \frac{ (r_{i}^{k})^{\alpha}}{(r_{i}^{k})^{n+1+\alpha}}\geq\frac{C}{(r_{i}^{k})^{n+1}};
\ee
if $r_{i}^{k}<1-|z_{i}^{k}|,$ we also have
\be\begin{split}
1-|z| \le 1-|z_{i}^{k}|+ \varrho (z_{i}^{k}, z) \lesssim 1-|z_{i}^{k}|,
\end{split}\ee
hence,
\be
\frac{(1-|z|)^{\alpha}}{v_{\alpha}(B^{\varrho}(z_{i}^{k},r_{i}^{k}))}\gtrsim\frac{C (1-|z_{i}^{k}|)^{\alpha}}{(r_{i}^{k})^{n+1}(1-|z_{i}^{k}|)^{\alpha}}\geq\frac{C}{(r_{i}^{k})^{n+1}}.
\ee
In summery, \eqref{eq:good-function-inequality2} is obtained.

We now come back to estimate the denominator of (\ref{eq:good-function-equality1}). Let
\be
F_{tr_{i}^{k}}=\left\{(\alpha,\beta)\triangleq(\alpha_1,\cdots,\beta_{n}):
|\alpha_1|,|\beta_1|<tr_{i}^{k};\alpha_{2}^2+\cdots+\beta_{n}^{2}<tr_{i}^{k}\right\}.
\ee
Then, we have
\be\begin{split}
\Big \| & \Theta(z)^{J} - \sum_{I\prec J}\pi_{I}(z)\int\Theta(w)^{J}\pi_{I}(w)dv_{\alpha,\varphi_{i}^{k}}(w) \Big \|_{\alpha,\varphi_{i}^{k}}^{2}\\
= & \frac{1}{ \| \varphi_{i}^{k} \|_{1, \alpha}} \int \Big | \Theta(z)^{J}-\sum_{I\prec J}\pi_{I}(z)\int\Theta^{J}\pi_{I}dv_{\alpha,\varphi_{i}^{k}} \Big |^{2} \varphi_{i}^{k} d v_{\alpha} (z)\\
\gtrsim & \frac{1}{v_{\alpha}(B^{\varrho}(z_{i}^{k},r_{i}^{k}))} \int_{B^\varrho (z_{i}^{k},\nu r_{i}^{k})} \Big | \Theta(z)^{J}-\sum_{I\prec J}\pi_{I}(z)\int\Theta^{J}\pi_{I}dv_{\alpha,\varphi_{i}^{k}} \Big |^{2}\varphi_{i}^{k} d v_{\alpha} (z)\\
\ge & \frac{1}{v_{\alpha}(B^{\varrho}(z_{i}^{k},r_{i}^{k}))} \int_{\{z\in\mathbb{B}_n:\; \varrho (z_{i}^{k},z) < \frac{1}{4} \nu r_{i}^{k}\}} \Big | \Theta(z)^{J} - \sum_{I\prec J}\pi_{I}(z)\int\Theta^{J}\pi_{I}dv_{\alpha,\varphi_{i}^{k}} \Big |^{2} d v_{\alpha}(z)\\
\gtrsim & \frac{1}{(r^{k}_{i})^{n+1}} \int_{\left\{z\in\mathbb{B}_n: \; \varrho (z_{i}^{k},z)<\frac{1}{4} \nu r_{i}^{k}\right\}} \Big | \Theta(z)^{J}-\sum_{I\prec J}\pi_{I}(z)\int\Theta^{J}\pi_{I}dv_{\alpha,\varphi_{i}^{k}} \Big |^{2}dv(z)\\
= & \frac{1}{(r^{k}_{i})^{n+1}} \int_{F_{\frac{1}{4} \nu r_{i}^{k}}} \Big | (\alpha,\beta)^{J}-\sum_{I\prec J}\pi_{I}(\alpha,\beta)\int\Theta^{J}\pi_{I}dv_{\alpha,\varphi_{i}^{k}} \Big |^{2}dv(\alpha,\beta)\\
= & \int_{F_{\frac{1}{4} \nu}}\Bigg | \sum_{I \prec J} \pi_{I} \big ( r_{i}^{k} \alpha_1,r_{i}^{k} \beta_1,(r_{i}^{k})^{\frac{1}{2}} \alpha_2,\cdots,(r_{i}^{k})^{\frac{1}{2}} \beta_n \big )\\
& \quad \quad \quad \times \int\Theta^{J}\pi_{I}dv_{\alpha,\varphi_{i}^{k}}- (r_{i}^{k})^{d(J)}(\alpha,\beta)^{J} \Bigg|^{2} dv(\alpha,\beta),
\end{split}\ee
where we have used (\ref{eq:good-function-inequality2}) and made the change of variables
\be
\left(\alpha_1,\beta_1,\alpha_2,\beta_2,\cdots,\alpha_n,\beta_n\right)\rightarrow\left(\frac{\alpha_1}{r_{i}^{k}},\frac{\beta_1}{r_{i}^{k}},
\frac{\alpha_2}{(r_{i}^{k})^{\frac{1}{2}}},\frac{\beta_2}{(r_{i}^{k})^{\frac{1}{2}}},\cdots,
\frac{\alpha_n}{(r_{i}^{k})^{\frac{1}{2}}},\frac{\beta_n}{(r_{i}^{k})^{\frac{1}{2}}}\right).
\ee
We continue to estimate the last integral, which is equal to
\be\begin{split}
(r_{i}^{k})^{2d(J)} \int_{F_{\frac{1}{4} \nu}}& \Bigg | \sum_{I \prec J} \frac{1}{(r_{i}^{k})^{d(J)}} \pi_{I} \big ( r_{i}^{k}\alpha_1,r_{i}^{k}\beta_1,(r_{i}^{k})^{\frac{1}{2}}\alpha_2,\cdots,(r_{i}^{k})^{\frac{1}{2}}\beta_n  \big )\\
&\quad \quad \quad \times \int \Theta^{J}\pi_{I}dv_{\alpha,\varphi_{i}^{k}} - (\alpha,\beta)^{J}\Bigg|^{2} d v(\alpha,\beta)\\
& \gtrsim (r_{i}^{k})^{2d(J)}\int_{F_{\frac{1}{m}c\nu}}|(\alpha,\beta)^{J}-P_{J}(\alpha,\beta)|^2dv(\alpha,\beta)\\
& \geq c_J (r_{i}^{k})^{2d(J)},
\end{split}\ee
where $P_{J}(\alpha,\beta)$ is the projection of $(\alpha,\beta)^{J}$ into the Hilbert space of polynomials $P(\alpha,\beta)$ spanned by $\{(\alpha,\beta)^{I}:I\prec J\}$ with the norm
\be
\|P\|= \Bigg ( \int_{F_{\frac{1}{4} \nu}}|P(\alpha,\beta)|^2dv(\alpha,\beta) \Bigg)^{\frac{1}{2}}.
\ee
Combining this estimation on the denominator of (\ref{eq:good-function-equality1}) with the previous estimation for its numerator yields that the coefficient of $\Theta(z)^{I}$ ($I\preceq J$) is dominated by $c_{J}(r_{i}^{k})^{-d(I)},$ i.e.,
\be
|a_{J,I}|\leq c_{J}(r_{i}^{k})^{d(J)-d(I)}(r_{i}^{k})^{-d(J)}=c_{J}(r_{i}^{k})^{-d(I)},\ \ I\preceq J.
\ee
Therefore, the claim (\ref{eq:good-function-inequality1}) is proved.

Now we return to the proof of \eqref{eq:good-function3}. Indeed, by the Liebnitz rule and the fact $\frac{c_{L}}{\|\varphi_{i}\|_{1,\alpha}}\varphi_{i}\in\mathcal{G}^{L}_{\mu}(w_{i})$, we have
\be
\frac{c_{L}(r_{i}^{k})^{-d(I)}}{\|\varphi_{i}\|_{1,\alpha}}\Theta(z_{i}^{k},z)^{I}\varphi_{i}\in\mathcal{G}^{L}_{\mu}(w_{i}),\ \ |I|\leq L.
\ee
Thus, by (\ref{eq:good-function-inequality1}) we also have
\be
\frac{c_{L}}{\|\varphi_{i}^{k}\|_{1,\alpha}} \pi_{J} \varphi_{i}^{k}\in\mathcal{G}^{L}_{\mu}(w_{i}),\ \ |J|\leq L.
\ee
This completes the proof.
\end{proof}

 We denote by $\mathcal{P}_{\varphi_{i}^k}$ the orthogonal projection of $L^{2}_{\varphi_{i}^k}(\mathbb{B}_n)$ onto $V_{\varphi_{i}^{k}}(z_{i}^{k}).$

\begin{lemma}\label{le:good-function}
With the notation introduced above, there exists a constant $C>0$ such that for $f \in\mathcal{A}^p_{\alpha},$
\beq\label{eq:good-function1}
\left|\mathcal{P}_{\varphi_{i}^k}(f)(z)\varphi_{i}^k(z)\right|\leq C 2^{k_{0}+k},
\eeq
and
\beq\label{eq:good-function2}
\left | \mathcal{P}_{\varphi_{j}^{k+1}} \Big ( [ f-\mathcal{P}_{\varphi_{j}^{k+1}}(f) ] \varphi_{i}^{k}\Big)(z)\varphi_{j}^{k+1}(z)\right|\leq C 2^{k_{0}+k+1},
\eeq
for all $i,j,k.$
\end{lemma}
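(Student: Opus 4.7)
My plan rests on expanding $\mathcal{P}_{\varphi_i^k}$ in the orthonormal basis $\{\pi_J^{(k,i)}\}_{|J|\le L}$ of $V^L_{\varphi_i^k}(z_i^k)$ and applying Lemma \ref{le:good-function3}, which tells us that $c_L\pi_J^{(k,i)}\varphi_i^k/\|\varphi_i^k\|_{1,\alpha}\in\mathcal{G}^L_\mu(w_i^k)$ for every $w_i^k\in B^\varrho(z_i^k,\mu r_i^k)\cap\mathcal{O}_k^c$. The $J=0$ case of the bump norm, combined with $\|\varphi_i^k\|_{1,\alpha}\approx v_\alpha(B^\varrho(z_i^k,r_i^k))$, also yields a uniform pointwise bound $|\pi_J^{(k,i)}(z)\varphi_i^k(z)|\le C$. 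These two facts drive every estimate below.

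For \eqref{eq:good-function1}, I would write
\[
\mathcal{P}_{\varphi_i^k}(f)(z)\varphi_i^k(z)=\sum_{|J|\le L}\pi_J^{(k,i)}(z)\varphi_i^k(z)\cdot\frac{1}{\|\varphi_i^k\|_{1,\alpha}}\int f\,\pi_J^{(k,i)}\varphi_i^k\,dv_\alpha,
\]
pick any $w_i^k\in B^\varrho(z_i^k,\mu r_i^k)\cap\mathcal{O}_k^c$ (nonempty by Lemma \ref{le:Whitney}(3) and $\mathcal{O}_k\subsetneq\mathbb{B}_n$), and bound each inner integral by $c_L^{-1}\|\varphi_i^k\|_{1,\alpha}\,\mathcal{K}_{\mu,L}(f)(w_i^k)\le c_L^{-1}\|\varphi_i^k\|_{1,\alpha}\cdot 2^{k_0+k}$. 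Summing the finitely many terms and invoking the uniform control of $\pi_J^{(k,i)}\varphi_i^k$ closes the first estimate.

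For \eqref{eq:good-function2}, I first split
\[
\mathcal{P}_{\varphi_j^{k+1}}\big((f-\mathcal{P}_{\varphi_j^{k+1}}f)\varphi_i^k\big)=\mathcal{P}_{\varphi_j^{k+1}}(f\varphi_i^k)-\mathcal{P}_{\varphi_j^{k+1}}(\mathcal{P}_{\varphi_j^{k+1}}(f)\varphi_i^k),
\]
and bound each piece separately by $C\,2^{k_0+k+1}$. One may assume the supports overlap (otherwise the estimate is trivial), and then the nesting $\mathcal{O}_{k+1}\subset\mathcal{O}_k$ combined with the Whitney construction forces $r_j^{k+1}\lesssim r_i^k$, so $\varphi_i^k$ is smooth on the finer scale $r_j^{k+1}$ with derivatives bounded by $(r_j^{k+1})^{-d(J)}$. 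The first piece is handled as in \eqref{eq:good-function1} but with test function $c_L'\varphi_i^k\pi_J^{(k+1,j)}\varphi_j^{k+1}/\|\varphi_j^{k+1}\|_{1,\alpha}$; a Leibniz-rule calculation shows this still lies in $\mathcal{G}^L_\mu(w_j^{k+1})$ for $w_j^{k+1}\in B^\varrho(z_j^{k+1},\mu r_j^{k+1})\cap\mathcal{O}_{k+1}^c$, pairing with $f$ to give $2^{k_0+k+1}$. For the second piece, expand $\mathcal{P}_{\varphi_j^{k+1}}(f)=\sum_I c_I\pi_I^{(k+1,j)}$ whose coefficients satisfy $|c_I|\lesssim 2^{k_0+k+1}$ by the \eqref{eq:good-function1} argument at scale $(j,k+1)$; using $|\varphi_i^k|\le 1$ with orthonormality of $\{\pi_I^{(k+1,j)}\}$ in $L^2_{\alpha,\varphi_j^{k+1}}$ and Cauchy-Schwarz yields a uniform bound on each coefficient of $\mathcal{P}_{\varphi_j^{k+1}}(\pi_I^{(k+1,j)}\varphi_i^k)$, so summing reproduces $C\,2^{k_0+k+1}$.

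The main obstacle will be verifying that the product $\varphi_i^k\pi_J^{(k+1,j)}\varphi_j^{k+1}$, suitably normalized, genuinely belongs to $\mathcal{G}^L_\mu(w_j^{k+1})$. This requires teasing the radius comparison $r_i^k\gtrsim r_j^{k+1}$ out of the Whitney machinery together with $\mathcal{O}_k^c\subset\mathcal{O}_{k+1}^c$, and then carefully tracking mixed derivatives through the Leibniz rule while converting between the $\tau$-extremal frames adapted to $z_i^k$ and to $z_j^{k+1}$.
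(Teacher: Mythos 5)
Your proposal is correct and follows essentially the same route as the paper: expand $\mathcal{P}_{\varphi_i^k}(f)$ in the orthonormal basis and use Lemma \ref{le:good-function3} together with $w_i^k\in\mathcal{O}_k^c$ to bound the coefficients by $\mathcal{K}_{\mu,L}(f)(w_i^k)\le 2^{k_0+k}$, then for \eqref{eq:good-function2} split off the $f\varphi_i^k$ and $\mathcal{P}_{\varphi_j^{k+1}}(f)\varphi_i^k$ pieces, derive $r_j^{k+1}\le K(1+2K)r_i^k$ from the Whitney radii and $\mathcal{O}_{k+1}\subset\mathcal{O}_k$, and show via Leibniz that the normalized product $\varphi_i^k\pi_J\varphi_j^{k+1}$ is a bump in $\mathcal{G}^L_\mu(w_j^{k+1})$. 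Your Cauchy--Schwarz treatment of the second piece is a harmless cosmetic variant of the paper's direct use of \eqref{eq:good-function1} and the uniform boundedness of the $\pi_J$ on the support of $\varphi_j^{k+1}$.
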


\begin{proof}
We first prove \eqref{eq:good-function1}. In fact, by (\ref{eq:good-function3}) we have
\be\begin{split}
\Big | \mathcal{P}_{\varphi_{i}^k} & (f)(z) \varphi_{i}^k(z) \Big |\\
=& \left | \left( \sum_{|J| \leq L} \int_{\mathbb{B}_n} f(w) \overline{\pi_{J}(w)} \frac{\varphi_{i}^{k}(w)}{\|\varphi_{i}^{k}\|_{1,\alpha}}dv_{\alpha}(w) \pi_{J}(z) \right) \phi_{i}^{k}(z) \right |\\
\lesssim & \mathcal{K}_{\mu,L}(f)(w_{i}^{k})\sum_{|J|\leq L}\left|\pi_{J}(z)\varphi_{i}^k(z)\right|\\
=& \mathcal{K}_{\mu,L}(f)(w_{i}^{k}) \sum_{|J|\leq L} \frac{c_{L}^{-1} \|\varphi_{i}^k \|_{1,\alpha}}{v_{\alpha}(B^{\varrho}(z_{i}^{k}, r_{i}^{k}))}\\
& \quad \times \left | \frac{c_{L} v_{\alpha}(B^{\varrho}(z_{i}^{k}, r_{i}^{k}))}{\|\varphi_{i}^k\|_{1,\alpha}} \pi_{J}(z) \varphi_{i}^k(z) \right |\\
\lesssim & 2^{k_{0}+k}\sum_{|J|\leq L} \left | \frac{c_{L} v_{\alpha}(B^{\varrho}(z_{i}^{k}, r_{i}^{k}))}{\|\varphi_{i}^k\|_{1,\alpha}} \pi_{J}(z) \varphi_{i}^k(z)
\right |\\
\lesssim & c_{L} 2^{k_{0}+k},
\end{split}\ee
the last inequality is also the consequence of (\ref{eq:good-function3}).

In what follows, we prove the second estimate (\ref{eq:good-function2}) which follows from (\ref{eq:good-function1}) by the same argument. For simplicity, we still use $\pi_{J}(z)$ ($|J|\leq L$) to denote an orthonormal basis for $V_{\varphi_{j}^{k+1}}(z_{j}^{k+1}).$ Then
\be\begin{split}
\bigg |&  \mathcal{P}_{\varphi_{j}^{k+1}}\Big ( [f-\mathcal{P}_{\varphi_{j}^{k+1}}(f) ] \varphi_{i}^{k}\Big)(z)\varphi_{j}^{k+1}(z)\bigg | \\
= & \Bigg | \sum_{|J| \leq L} \int_{\mathbb{B}_n} \Big ( [f-\mathcal{P}_{\varphi_{j}^{k+1}}(f) ] \varphi_{i}^{k} \Big )(w) \overline{\pi_{J}(w)} \frac{\varphi_{j}^{k+1}(w)}{\|\varphi_{j}^{k+1}\|_{1,\alpha}}dv_{\alpha}(w) \pi_{J}(z) \phi_{j}^{k+1}(z) \Bigg |\\
\leqslant & \left | \sum_{|J| \leq L} \int_{\mathbb{B}_n}  f(w)\varphi_{i}^{k}(w) \overline{\pi_{J}(w)} \frac{\varphi_{j}^{k+1}(w)}{\|\varphi_{j}^{k+1}\|_{1,\alpha}}dv_{\alpha}(w) \pi_{J}(z) \phi_{j}^{k+1}(z) \right |\\
& \quad + \left | \sum_{|J| \leq L} \int_{\mathbb{B}_n}  \mathcal{P}_{\varphi_{j}^{k+1}}(f) (w)\varphi_{i}^{k}(w) \overline{\pi_{J}(w)} \frac{\varphi_{j}^{k+1}(w)}{\|\varphi_{j}^{k+1}\|_{1,\alpha}}dv_{\alpha}(w) \pi_{J}(z) \phi_{j}^{k+1}(z) \right |\\
=: & I + II.
\end{split}\ee
Notice that the $\pi_{J}(z)$ are uniformly bounded on the support of $\phi_{j}^{k+1}(z)$ and
\be
\left|\mathcal{P}_{\varphi_{j}^{k+1}}(f)(z)\varphi_{j}^{k+1}(z)\right|\leq C 2^{k_{0}+k+1},
\ee
we have $II\lesssim2^{k_{0}+k+1}.$

To estimate $I,$ we note that if each integration part is not zero in $I,$ then $\varphi_{i}^{k}\varphi_{j}^{k+1}(z)\not\equiv0$ and hence there exist a constant $C$ such that $r_{j}^{k+1}\leq Cr_{i}^{k}.$ In fact, this can be verified as follows.

Let $K$ be the constant occurring in the quasi-triangle inequality satisfied by $\varrho.$ By Lemma \ref{le:Whitney}, we know that
\be
r_{i}^{k}=\frac{1}{2K} \varrho (z_{i}^{k},\mathcal{O}^{c}_{k}),
\ee
for all $i$ and $k.$ Let $w\in B^\varrho (z_i^{k},r_i^{k})\cap B^\varrho (z_{j}^{k+1},r_{j}^{k+1}).$ Since $\mathcal{O}_{k+1}\subset\mathcal{O}_{k},$ we have
\be\begin{split}
r_{j}^{k+1}& = \frac{1}{2K} \varrho (z_{j}^{k+1},\mathcal{O}^{c}_{k+1}) \le \frac{1}{2K} \cdot K \big[ \varrho (z_{j}^{k+1},w)+ \varrho (w,\mathcal{O}^{c}_{k+1})\big]\\
& \le \frac{1}{2}r_{j}^{k+1}+\frac{1}{2} \varrho (w,\mathcal{O}^{c}_{k}) \le \frac{1}{2}r_{j}^{k+1}+\frac{1}{2} K \big [ \varrho (w,z_{i}^{k})+ \varrho (z_{i}^{k},\mathcal{O}^{c}_{k})\big].
\end{split}\ee
Then,
\be\begin{split}
r_{j}^{k+1}\le & K r_{i}^{k}+K \varrho (z_{i}^{k},\mathcal{O}^{c}_{k})
\le K r_{i}^{k}+2K^{2} \frac{ \varrho (z_{i}^{k},\mathcal{O}^{c}_{k})}{2K}\\
= & K r_{i}^{k}+2K^{2} r_{i}^{k} \le K (1 + 2 K) r_i^k.
\end{split}\ee
Thus, there exist a constant $C$ such that $r_{j}^{k+1}\leq Cr_{i}^{k}.$

Now, by (\ref{eq:good-function3}) and the change of variable and Liebnitz rule, we have
\be
\sup_{|J|\leq L} (r_{j}^{k+1})^{d(J)}\|D^{J}_{z_j^{k+1}}\varphi_{i}^{k}\pi_{J}\varphi_{j}^{k+1}\|_
{L^{\infty}(B^{\varrho}(z_{j}^{k+1}, r_{j}^{k+1}))}\leq c_{L}^{-1},
\ee
where the constant $c_{L}$ depending only on $L.$ This also implies that
\be
\frac{c_{L}}{\|\varphi_{j}^{k+1}\|_{1,\alpha}} \varphi_{i}^{k} \pi_{J} \varphi_{j}^{k+1} \in \mathcal{G}^{L}_{\mu}(w_{j}^{k+1}),
\ee
for each $w_{j}^{k+1}\in B^\varrho (z_{j}^{k+1},\mu r_{j}^{k+1})\cap\mathcal{O}_{k+1}^{c}.$ Thus, we have
\be\begin{split}
I=& \left | \sum_{|J| \leq L} \int_{\mathbb{B}_n}  f(w)\varphi_{i}^{k}(w) \overline{\pi_{J}(w)} \frac{\varphi_{j}^{k+1}(w)}{\|\varphi_{j}^{k+1}\|_{1,\alpha}}dv_{\alpha}(w) \pi_{J}(z) \phi_{j}^{k+1}(z) \right |\\
\lesssim & \mathcal{K}_{\mu,L}(f)(w_{j}^{k+1})\sum_{|J|\leq L}\left|\pi_{J}(z)\varphi_{j}^{k+1}(z)\right|\\
\lesssim & 2^{k_{0}+k+1}.
\end{split}\ee
Hence \eqref{eq:good-function2} is proved and the proof of Lemma \ref{le:good-function} is complete.
\end{proof}

\subsection{Construction of atoms}\label{AtomConstruction}

In this subsection, we will give the construction of atomic decomposition for each $f\in\mathcal{A}_{\alpha}^{p}$ ($0<p\leq1$). Given an integer $N \ge N_{p, \alpha},$ let $L \ge \max \big \{ N, \big [ \frac{1}{p} (n+1+ \alpha)\big ] + 1 \big \}$ be an integer. Recall that
\be
\mathcal{O}_{k}=\left\{z\in\mathbb{B}^n :\; \mathcal{K}_{\mu,L}(f)(z)+f^{\star}_{\delta}(z)>2^{k_{0}+k}\right\},\quad k=0,1, \ldots.
\ee
For each $k$ we fix the Whitney type covering $\{B^\varrho (z^k_{i},r^k_{i})\}_{i=1}^{\infty}$ and the partition of unity $\{\varphi^{k}_{i}\}$ with respect to $\mathcal{O}_{k},$ as constructed in Lemma \ref{le:Partition}. Then, we can write
\be\begin{split}
f = & \Big ( f - \sum_{i=1}^{\infty}f\varphi_{i}^{k} \Big ) + \sum_{i=1}^{\infty}f\varphi_{i}^{k}\\
= & f_{k}+\sum_{i=1}^{\infty}f\varphi_{i}^{k}\\
= & f_{k}+\sum_{i=1}^{\infty}\mathcal{P}_{\varphi_{i}^{k}}(f)\varphi_{i}^{k}+\sum_{i=1}^{\infty}\left(f-\mathcal{P}_{\varphi_{i}^{k}}(f)\right)\varphi_{i}^{k}\\
= & h_{k}+\sum_{i=1}^{\infty}\left(f-\mathcal{P}_{\varphi_{i}^{k}}(f)\right)\varphi_{i}^{k},
\end{split}\ee
where $f_k = f - \sum_{i=1}^{\infty}f\varphi_{i}^{k}$ and
\beq\label{eq:h-fdecomp}
h_{k}= \Big ( f-\sum_{i=1}^{\infty}f\varphi_{i}^{k} \Big ) + \sum_{i=1}^{\infty}\mathcal{P}_{\varphi_{i}^{k}}(f)\varphi_{i}^{k}.
\eeq
Notice that by \eqref{eq:good-function1},
\beq\label{eq:P(f)}
\left|\sum_{i=1}^{\infty}\mathcal{P}_{\varphi_{i}^{k}}(f)(z)\varphi_{i}^{k}(z)\right|\leq c2^{k_0+k},
\eeq
because no point in $\mathcal{O}_k$ lies in more than $N_0$ of the balls $B^\varrho (z_{i}^{k},r_{i}^{k}).$ Moreover,
\be
\mathrm{supp} \left ( \sum_{i=1}^{\infty} \left [ f-\mathcal{P}_{\varphi_{i}^{k}}(f) \right ] \varphi_{i}^{k} \right ) \subset \mathcal{O}_k.
\ee
This implies that $\sum_{i=1}^{\infty} \left [ f-\mathcal{P}_{\varphi_{i}^{k}}(f) \right ] \varphi_{i}^{k} \to 0$ as $k \to \8.$ Hence, by \eqref{eq:h-fdecomp} one concludes that
\be
f-h_{k}\rightarrow 0 \; \text{as}\; k \to \8\; \text{for a.e.}\; z\in \mathbb{B}_n.
\ee
This follows that
\beq\label{eq:f-hdecomp}
f=h_{0}+\sum_{k=0}^{\infty} \big ( h_{k+1}- h_{k} \big ),\quad \mathrm{a.e} \quad z\in \mathbb{B}_n.
\eeq

Now, since
\be
f-h_{k} = \sum_{i=1}^{\infty} \Big ( f-\mathcal{P}_{\varphi_{i}^{k}}(f) \Big ) \varphi_{i}^{k}
\ee
and
\be\begin{split}
\sum_{i=1}^{\infty} & \mathcal{P}_{\varphi_{j}^{k+1}}\Big ( \big [ f-\mathcal{P}_{\varphi_{j}^{k+1}}(f) \big ] \varphi_{i}^{k} \Big ) = \mathcal{P}_{\varphi_{j}^{k+1}} \Big ( \big [ f-\mathcal{P}_{\varphi_{j}^{k+1}}(f) \big ] \chi_{\mathcal{O}_k} \Big )\\
& = \mathcal{P}_{\varphi_{j}^{k+1}} \big [ f \chi_{\mathcal{O}_k} \big ] - \mathcal{P}_{\varphi_{j}^{k+1}} \big [ \mathcal{P}_{\varphi_{j}^{k+1}}(f) \chi_{\mathcal{O}_k}\big ] = 0,
\end{split}\ee
we can write
\be\begin{split}
& h_{k+1}-h_{k} \\
& = \big ( f-h_{k} \big ) - \big ( f-h_{k+1} \big )\\
& = \sum_{i=1}^{\infty} \big [ f-\mathcal{P}_{\varphi_{i}^{k}}(f) \big ] \varphi_{i}^{k}-
\sum_{j=1}^{\infty} \big [ f-\mathcal{P}_{\varphi_{j}^{k+1}}(f) \big ] \varphi_{j}^{k+1}\\
& = \sum_{i=1}^{\infty} \big [ f-\mathcal{P}_{\varphi_{i}^{k}}(f) \big ] \varphi_{i}^{k}\\
& \quad \quad - \sum_{j=1}^{\infty}\sum_{i=1}^{\infty} \bigg \{ \big [ f-\mathcal{P}_{\varphi_{j}^{k+1}}(f) \big ] \varphi_{i}^{k}- \mathcal{P}_{\varphi_{j}^{k+1}} \Big( \big [ f-\mathcal{P}_{\varphi_{j}^{k+1}}(f)\big ] \varphi_{i}^{k} \Big ) \bigg \} \varphi_{j}^{k+1}\\
& = \sum_{i=1}^{\infty} \bigg \{ \big [ f- \mathcal{P}_{\varphi_{i}^{k}}(f) \big ] \varphi_{i}^{k}\\
& \quad \quad - \sum_{j=1}^{\infty} \Big ( \big [ f-\mathcal{P}_{\varphi_{j}^{k+1}}(f) \big ] \varphi_{i}^{k}-\mathcal{P}_{\varphi_{j}^{k+1}} \Big ( \big [ f-\mathcal{P}_{\varphi_{j}^{k+1}}(f) \big ] \varphi_{i}^{k} \Big ) \Big ) \varphi_{j}^{k+1} \bigg \}\\
& =: \sum_{i=1}^{\infty}b_{i}^{k},
\end{split}\ee
where
\beq\label{eq:b-decomp}
\begin{split}
b_{i}^{k}=& \big [ f- \mathcal{P}_{\varphi_{i}^{k}}(f) \big ] \varphi_{i}^{k}\\
& - \sum_{j=1}^{\infty} \Big \{ \big [ f-\mathcal{P}_{\varphi_{j}^{k+1}}(f) \big ] \varphi_{i}^{k}-\mathcal{P}_{\varphi_{j}^{k+1}} \Big ( \big [ f-\mathcal{P}_{\varphi_{j}^{k+1}}(f) \big ] \varphi_{i}^{k} \Big ) \Big \} \varphi_{j}^{k+1}.
\end{split}\eeq
Consequently, we can write formally
\beq\label{eq:f-bdecomp}
f = h_{0} + \sum_{k=0}^{\infty}\sum_{i=1}^{\infty}b_{i}^{k}
\eeq
where the convergence of the series in the right hand side will be fixed below.

Now set
\be
a_{0}=\frac{1}{\lambda_{0}}h_{0},\quad a_{i}^{k} = \frac{1}{\lambda_{i}^{k}} b_{i}^{k},
\ee
where
\be
\lambda_{0} = \|h_{0}\|_{L^{\infty} (\mathbb{B}_n)},\qquad \lambda_{i}^{k} = 2^{k_{0}+k+1}v_{\alpha}(B^{\varrho}(z_{i}^{k},C r_{i}^{k}))^{\frac{1}{p}},
\ee
and $ C = C(n, K)$ is a constant which will be fixed later. Therefore, \eqref{eq:f-bdecomp} can be rewritten as
\beq\label{eq:atomicdecomp}
f=\lambda_{0}a_{0}+\sum_{k=0}^{\infty}\sum_{i=1}^{\infty}\lambda_{i}^{k}a_{i}^{k}.
\eeq
In the sequel, we will check that this representation of $f$ is the desired atomic decomposition, where the series in the right hand side converges in the sense of distributions.

I. {\it Support of the $b_{i}^{k}$'s.}\; We note that the first term in \eqref{eq:b-decomp} is clearly supported in $B^\varrho (z_{i}^{k},r_{i}^{k}).$ Note that if the terms in the series \eqref{eq:b-decomp} are not identically $0,$ then the condition
\be
B^\varrho (z_{i}^{k},r_{i}^{k})\cap B^\varrho (z_{j}^{k+1},r_{j}^{k+1})\neq\emptyset
\ee
must be satisfied for some $j.$ By a standard argument (see the proof of the second inequality \eqref{eq:good-function2} in Lemma \ref{le:good-function}), we know that there is a constant $C_1>0$ depending only on $\varrho$ such that $r_j^{k+1} \leq C_1 r_i^k.$ Hence, there exists a constant $C>0$ depending only on $K$ and $C_1$ such that
\be
B^{\varrho}(z_{j}^{k+1}, r_j^{k+1} ) \subset B^{\varrho}(z_{i}^{k}, C {r_{i}^{k}}).
\ee
Thus $b_{i}^{k}$ is supported in $B^{\varrho}(z_{i}^{k},C{r_{i}^{k}})$ and so does $a_i^k.$

II. {\it Size estimates for $h_{0}$ and $b_{i}^{k}$'s.}\; Firstly, by \eqref{eq:h-fdecomp} and \eqref{eq:P(f)} we have
\be\begin{split}
|h_{0}| = & \bigg | f_{0}+\sum_{i=1}^{\infty}\mathcal{P}_{\varphi_{i}^{0}}(f)\varphi_{i}^{0} \bigg |
= \bigg | f \chi_{\mathcal{O}_{0}^{c}}+\sum_{i=1}^{\infty}\mathcal{P}_{\varphi_{i}^{0}}(f)\varphi_{i}^{0} \bigg |\\
=& \bigg | f\chi_{\mathcal{O}_{0}^{c}}+\sum_{i=1}^{\infty}\mathcal{P}_{\varphi_{i}^{0}}(f)\varphi_{i}^{0} \bigg |
\leq \|f_{\delta}^{\star}\|_{L^{\infty}({\mathcal{O}_{0}^{c}})}+ \bigg | \sum_{i=1}^{\infty}\mathcal{P}_{\varphi_{i}^{0}}(f)\varphi_{i}^{0} \bigg |
\leq c 2^{k_{0}}.
\end{split}\ee
Thus $\| h_{0} \|_{L^{\8}} \le c 2^{k_0},$ so $a_{0}$ is a $(p, \8, N)_{\alpha}$-atom.

On the other hand, by \eqref{eq:b-decomp} we have
\be\begin{split}
|b_{i}^{k}| = & \bigg | \big [ f-\mathcal{P}_{\varphi_{i}^{k}}(f) \big ]\varphi_{i}^{k}\\
& \quad - \sum_{j=1}^{\infty} \bigg ( \big [ f-\mathcal{P}_{\varphi_{j}^{k+1}}(f) \big] \varphi_{i}^{k}-
{P}_{\varphi_{j}^{k+1}} \Big ( \big [ f-\mathcal{P}_{\varphi_{j}^{k+1}}(f) \big ] \varphi_{i}^{k} \Big ) \bigg ) \varphi_{j}^{k+1}\bigg |\\
\leq & \bigg | \big [ f-\mathcal{P}_{\varphi_{i}^{k}}(f) \big ] \varphi_{i}^{k}-
\sum_{j=1}^{\infty} \big [ f-\mathcal{P}_{\varphi_{j}^{k+1}}(f) \big ] \varphi_{i}^{k}\varphi_{j}^{k+1} \bigg |\\
& \qquad + \bigg | \sum_{j=1}^{\infty}{P}_{\varphi_{j}^{k+1}} \Big ( \big [ f-\mathcal{P}_{\varphi_{j}^{k+1}}(f) \big ] \varphi_{i}^{k} \Big ) \varphi_{j}^{k+1}\bigg|.\\
\end{split}\ee
The second term on the right hand side is bounded by $c 2^{k_0 + k + 1}$ by \eqref{eq:P(f)}, while the first term is equal to
\be\begin{split}
\bigg |\big [ f-\mathcal{P}_{\varphi_{i}^{k}}(f) \big ] & \varphi_{i}^{k} \chi_{\mathcal{O}_{k+1}^{c}} + \sum_{j=1}^{\infty} \left ( \big [ f-\mathcal{P}_{\varphi_{i}^{k}}(f) \big ] - \big [ f-\mathcal{P}_{\varphi_{j}^{k+1}}(f) \big ] \right ) \varphi_{i}^{k}\varphi_{j}^{k+1} \bigg |\\
= & \bigg |\big [ f-\mathcal{P}_{\varphi_{i}^{k}}(f) \big ] \varphi_{i}^{k} \chi_{\mathcal{O}_{k}\setminus \mathcal{O}_{k+1}} + \sum_{j=1}^{\infty} \left ( \mathcal{P}_{\varphi_{j}^{k+1}}(f)-\mathcal{P}_{\varphi_{i}^{k}}(f) \right )\varphi_{i}^{k} \varphi_{j}^{k+1} \bigg |\\
\leq & \big | f \chi_{\mathcal{O}_{k}\setminus\mathcal{O}_{k+1}} \big | + \big |\mathcal{P}_{\varphi_{i}^{k}}(f)\varphi_{i}^{k} \big |+ \bigg | \sum_{j=1}^{\infty} \mathcal{P}_{\varphi_{j}^{k+1}}(f) \varphi_{j}^{k+1} \bigg | + \big | \mathcal{P}_{\varphi_{i}^{k}}(f)\varphi_{i}^{k} \big |\\
\leq & f^{\star}_{\delta}\chi_{\mathcal{O}_{k}\setminus\mathcal{O}_{k+1}} + c 2^{k_{0}+k+1}\\
\lesssim & 2^{k_{0}+k+1},
\end{split}\ee
where we have used Lemma \ref{le:good-function}. Thus, $|b_{i}^{k}| \lesssim 2^{k_{0}+k+1}.$

III. {\it Vanishing condition.}\; Notice that $1\in V_{\varphi_{i}^{k}}(z_{i}^{k})\cap V_{\varphi_{j}^{k+1}}(z_{i}^{k+1}).$
Then,
\be
\int_{\mathbb{B}_n} \big [ f-\mathcal{P}_{\varphi_{i}^{k}}(f) \big ]  \varphi_{i}^{k} d v_{\alpha} =0
\ee
and
\be
\int_{\mathbb{B}_n} \bigg ( \big [ f-\mathcal{P}_{\varphi_{j}^{k+1}}(f) \big ] \varphi_{i}^{k}-
{P}_{\varphi_{j}^{k+1}} \Big ( \big [ f-\mathcal{P}_{\varphi_{j}^{k+1}}(f) \big ] \varphi_{i}^{k} \Big ) \bigg) \varphi_{j}^{k+1}dv_{\alpha}=0.
\ee
Therefore, $\int_{\mathbb{B}_n}b_{i}^{k}dv_{\alpha}=0$ and so,
\be
\int_{\mathbb{B}_n} a_{i}^{k}dv_{\alpha}=0.
\ee

IV. {\it Moment condition.}\; We need to estimate
\be
\left|\int_{\mathbb{B}_n}b_{i}^{k}(z)\Phi(z)dv_{\alpha}(z)\right|
\ee
in terms of $\| \Phi \|_{\mathcal{S}_{N}(B^{\varrho}(z_{i}^{k},C r_{i}^{k}))}$ for any $\Phi\in\mathcal{C}^\infty(B^{\varrho}(z_{i}^{k},Cr_{i}^{k})).$ To this end, we first note that there exists a unitary operator $U_{z^k_i}$ such that $U_{z^k_i} z^k_i = (|z^k_i|, 0, \ldots, 0).$ For any $z \in B^{\varrho}(z_{i}^{k},Cr_{i}^{k})$ we assume
\be
U_{z^k_i} z = (x_1 + \mathrm{i} y_1, \ldots, x_n + \mathrm{i} y_n).
\ee
Then, by Lemma \ref{le:Psedu-metric03} we have
\be
\big | x_1 + \mathrm{i} y_1 - |z^k_i| \big | \le \varrho (z^k_i, z)\quad \text{and}\quad \sum^n_{j=2} |x_j + \mathrm{i} y_j|^2 \le 2 \varrho (z^k_i, z).
\ee
This yields that $|x_1 - |z^k_i||, |y_1| \lesssim r^k_i$ and $|x_j|, |y_j| \lesssim \sqrt{r^k_i}$ for $j \ge 2.$ Thus, \be
|\alpha_1|, |\beta_1| \lesssim r^k_i\quad \text{and}\quad |\alpha_j|, |\beta_j| \lesssim \sqrt{r^k_i}\; \text{for}\; j \ge 2
\ee
if $\Theta (z^k_i, z) = (\alpha_1, \beta_1, \ldots, \alpha_n, \beta_n).$

Using local coordinates $(\alpha_1, \beta_1, \ldots, \alpha_n, \beta_n),$
we denote by $P^{\Phi}_{z^{i}_{k},N}$ the Taylor expansion of order $N -1$ of $\Phi$ around $z^k_i$ on $B^{\varrho}(z_{i}^{k},Cr_{i}^{k}),$ i.e.,
\be
P^{\Phi}_{z^{i}_{k},N }(z)=\sum_{|J|\leq N -1}c_{J}  \frac{\partial^{k_1 + \cdots + k_n + m_1 + \cdots + m_n} \Phi }{\partial \alpha^{k_1}_1 \partial \beta^{m_1}_1 \cdots \partial \alpha^{k_n}_n \partial \beta^{m_n}_n} (z_{i}^{k}) \Theta (z_{i}^{k},z)^{J},
\ee
where $J = (k_1, m_1, \ldots, k_n, m_n).$ Note that $P^{\Phi}_{z^{i}_{k},N}$ is in $V_{\varphi_{i}^{k}}(z_{i}^{k}).$ Then, we have
\be
\left\|\Phi-P^{\Phi}_{z^{i}_{k},N}\right\|_{L^{\infty}(B^{\varrho}(z_{i}^{k},Cr_{i}^{k}))} \lesssim \left\|\Phi\right\|_{\mathcal{S}_N (B^{\varrho}(z_{i}^{k},C r_{i}^{k}))}.
\ee
In addition, if $B^\varrho (z_{i}^{k},r_{i}^{k})\cap B^\varrho (z_{j}^{k+1},r_{j}^{k+1})\neq\emptyset,$ there exists a constant $C>0$ such that $r^{k+1} \le C r^k_i$ and so $B^\varrho (z_{j}^{k+1},r_{j}^{k+1}) \subset B^\varrho (z_{i}^{k}, C r_{i}^{k}).$ In this case,  for all $|J|\leq N -1$ we make the change of variable such that $\Theta (z_{i}^{k},z)^{J}$ becomes the element in $V_{\varphi_{j}^{k+1}}(z_{j}^{k+1})$ and its order is still less than $N -1.$ Therefore,
\be\begin{split}
\left|\int_{\mathbb{B}_n}b_{i}^{k}(z)\Phi(z)dv_{\alpha}(z)\right|=&\left|\int_{\mathbb{B}_n}b_{i}^{k}(z)
\left(\Phi(z)-P^{\Phi}_{z^{i}_{k},N}\right)dv_{\alpha}(z)\right|\\
\leq&2^{k_{0}+k+1}v_{\alpha}(B^{\varrho}(z_{i}^{k},C r_{i}^{k}))\left\|\Phi-P^{\Phi}_{z_i^k, N}\right\|_{L^{\infty}(B^{\varrho}(z_{i}^{k},Cr_{i}^{k}))}\\
\lesssim & 2^{k_{0}+k+1}v_{\alpha}(B^{\varrho}(z_{i}^{k},C r_{i}^{k})) \left\|\Phi\right\|_{\mathcal{S}_N (B^{\varrho}(z_{i}^{k},C r_{i}^{k}))},
\end{split}\ee
by the size estimate for $b_{i}^{k}$'s as above. Thus,
\be
\left|\int_{\mathbb{B}^n}a_i^k (z)\Phi(z) d v_{\alpha}(z)\right| \lesssim \|\Phi\|_{\mathcal{S}_N (B^{\varrho}(z_i^k, C r^k_i ))} v_{\alpha}(B^{\varrho}(z_i^k , C r_i^k))^{1-\frac{1}{p}}.
\ee
and so $a^k_i$ is a $(p, \8, N)_{\alpha}$-atom.

V. {\it Convergence in the sense of distributions.}\; In the following, we will show that \eqref{eq:atomicdecomp} holds in the sense of distributions. It suffices to verify that $\sum_{k=0}^{m} \sum^{\8}_{i=1} b^k_i$ convergence in the sense of distributions. Let $\Psi\in C^{\infty}(\mathbb{B}_n).$ We have for any $m >n,$
\be\begin{split}
\bigg | \int_{\mathbb{B}_n} & \sum_{k=n}^{m} \sum^{\8}_{i=1} b^k_i \Psi dv_{\alpha} \bigg |
\le \sum_{k=n}^{m}\sum_{i=1}^{\infty} \bigg |\int_{\mathbb{B}_n}b_{i}^{k}\Psi dv_{\alpha} \bigg |\\
\leq & \sum_{k=n}^{m}\sum_{i=1}^{\infty}2^{k_{0}+k+1}v_{\alpha}(B^{\varrho}(z_{i}^{k},C r_{i}^{k}))\left\|\Psi\right\|_{\mathcal{S}_{N}(B^{\varrho}(z_{i}^{k},C r_{i}^{k}))}\\
\lesssim & \sum_{k=n}^{m}\sum_{i=1}^{\infty}
2^{k_{0}+k+1}v_{\alpha} \big(B^{\varrho}(z_{i}^{k}, C r_{i}^{k})\big) \big(r_{i}^{k}\big)^{\frac{N}{2}}\left\|\Psi\right\|_{C^{N} (\mathbb{B}_n)}\\
\lesssim&\sum_{k=n}^{m}\sum_{i=1}^{\infty}2^{k_{0}+k+1}
v_{\alpha}\big(B^{\varrho}(z_{i}^{k}, C r_{i}^{k})\big)^{\frac{1}{p}} \big(r_{i}^{k}\big)^{\frac{N}{2}-  (\frac{1}{p}-1) \max \{ n+1, n+1+ \alpha \}} \left \|\Psi\right\|_{C^N (\mathbb{B}_n)}\\
\lesssim & \sum_{k=n}^{m}\sum_{i=1}^{\infty}2^{k_{0}+k+1}
v_{\alpha}\big(B^\varrho (z_{i}^{k},r_{i}^{k})\big)^{\frac{1}{p}}\left\|\Psi\right\|_{C^N (\mathbb{B}_n)},
\end{split}\ee
where we use the fact $N \ge  N_{p, \alpha}$ and Lemma \ref{le:Psedu-metric02}. Hence, since $\frac{1}{p}\geq1$ we have
\be\begin{split}
\bigg | \int_{\mathbb{B}_n} &  \sum_{k=n}^{m} \sum^{\8}_{i=1} b^k_i (z) \Psi(z) dv_{\alpha}(z) \bigg |\\
\lesssim & \sum_{k=n}^{m}2^{k_{0}+k+1}\bigg(\sum_{i=1}^{\infty}
v_{\alpha}\big(B^\varrho (z_{i}^{k},r_{i}^{k})\big)\bigg)^{\frac{1}{p}}\left\|\Psi\right\|_{C^N (\mathbb{B}_n)}\\
\lesssim & N_{0}\sum_{k=n}^{m}2^{k_{0}+k+1}v_{\alpha}(\mathcal{O}_k)^{\frac{1}{p}}\left\|\Psi\right\|_{C^N (\mathbb{B}_n)}\\
\leq&N_{0}\left(\sum_{k=n}^{m}2^{(k_{0}+k+1)p}v_{\alpha}(\mathcal{O}_k)\right)^{\frac{1}{p}}
\left\|\Psi\right\|_{C^N(\mathbb{B}_n)}\\
\lesssim & \left\|\Psi\right\|_{C^N (\mathbb{B}_n)} \left(\sum_{k=n}^{m}\int_{2^{k_{0}+k-1}}^{2^{k_{0}+k}}t^{p-1}v_{\alpha}
\left\{z \in \mathbb{B}_n:\; \mathcal{K}_{\mu,L}(f)(z)+f^{\star}_{\delta}(z)>t\right\}dt\right)^{\frac{1}{p}}\\
%= & \left\|\Psi\right\|_{C^N (\mathbb{B}_n)} \left(\int_{2^{k_{0}+n-1}}^{\infty}t^{p-1}v_{\alpha}
%\left\{z \in \mathbb{B}_n:\; \mathcal{K}_{\mu,L}(f)(z)+f^{\star}_{\delta}(z)>t\right\}dt\right)^{\frac{1}{p}}\\
=&\left(\int_{\mathcal{O}_{n-1}}|\mathcal{K}_{\mu,L}(f)+f^{\star}_{\delta}|^{p}dv_{\alpha}\right)^{\frac{1}{p}}
\left\|\Psi\right\|_{C^N (\mathbb{B}_n)},
\end{split}\ee
which tends to $0$ as $n \rightarrow \infty.$ Thus, the equality \eqref{eq:atomicdecomp} holds in the sense of distributions.

VI. {\it Coefficients in $\el^{p}.$}\; At last, we only need to check the coefficients $\{\lambda_{i}^{k}\}$ belong to $\el^{p}.$ Indeed,
\be\begin{split}
\sum_{k=0}^{\infty}\sum_{i=1}^{\infty}|\lambda_{i}^{k}|^{p}&=\sum_{k=0}^{\infty}\sum_{i=1}^{\infty}2^{(k_{0}+k+1)p}v_{\alpha}(B^{\varrho}(z_{i}^{k},C r_{i}^{k}))\\
& \lesssim \sum_{k=0}^{\infty}\sum_{i=1}^{\infty}2^{(k_{0}+k+1)p}v_{\alpha}(B^\varrho (z_{i}^{k},r_{i}^{k})) \lesssim \sum_{k=0}^{\infty}2^{(k_{0}+k+1)p}v_{\alpha}(\mathcal{O}_{k})\\
& \lesssim\int_{2^{k_{0}}}^{\infty}t^{p-1}v_{\alpha}
\left\{z \in \mathbb{B}_n:\; \mathcal{K}_{\mu,L}(f)(z)+f^{\star}_{\delta}(z)>t\right\}dt\\
& \lesssim\|\mathcal{K}_{\mu,L}(f)+f^{\star}_{\delta}\|_{p,\alpha}^{p} \lesssim \|f\|_{p,\alpha}^{p}.
\end{split}\ee

\subsection{Proof of Theorem \ref{th:atomic-deco}}\label{AtomicdecompPf}

By far, we have shown that
\be
f = \lambda_{0}a_{0}+\sum_{k=0}^{\infty}\sum_{i=1}^{\infty}\lambda_{i}^{k}a_{i}^{k}
\ee
holds true in the distribution sense and hence the assertion (1) of Theorem \ref{th:atomic-deco} holds. Consequently, by the assertion (1) in Theorem \ref{th:p-atomBergman} we conclude that the series on the right hand side of \eqref{eq:atomicdecomp} converges in $L^p_{\alpha} (\mathbb{B}_n)$ and so the assertion (2) of Theorem \ref{th:atomic-deco} holds true as well.

It remains to prove the assertions (3) and (4) in Theorem \ref{th:atomic-deco}. Indeed, assuming that $f\in\mathcal{A}_{\alpha}^{p}\cap\mathcal{A}_{\alpha}^{2},$ we have
\be\begin{split}
f(z)=&P_{\alpha}(f)(z)\\
=& \bigg \langle \lambda_{0}a_{0}+ \sum_{k=0}^{\infty}\sum_{i=1}^{\infty}\lambda_{i}^{k}a_{i}^{k}, K^{\alpha}(\cdot,z) \bigg \rangle\\
=& \lambda_{0}P_{\alpha}(a_{0})(z)+ \bigg \langle \sum_{k=0}^{\infty}\sum_{i=1}^{\infty}\lambda_{i}^{k}a_{i}^{k}, K^{\alpha}(\cdot,z) \bigg \rangle\\
=& \lambda_{0}P_{\alpha}(a_{0})(z)+\sum_{k=0}^{\infty}\sum_{i=1}^{\infty}\lambda_{i}^{k}P_{\alpha}(a_{i}^{k})(z),
\end{split}\ee
since the series converges in the sense of distributions. Moreover,
\be
|\lambda_{0}|^{p}+\sum_{k=0}^{\infty}\sum_{i=1}^{\infty}|\lambda_{i}^{k}|^{p}\lesssim\|f\|_{p,\alpha}^p.
\ee
Therefore, the assertions (3) and (4) of Theorem \ref{th:atomic-deco} hold true for $f\in\mathcal{A}_{\alpha}^{p}\cap\mathcal{A}_{\alpha}^{2}.$

We next show that they hold for any $f$ in $\mathcal{A}_{\alpha}^{p}$ ($0<p\leq1$). Since $\mathcal{A}_{\alpha}^{p}\cap\mathcal{A}_{\alpha}^{2}$ is dense in $\mathcal{A}_{\alpha}^{p},$ we can choose a sequence $\{f_n\}_{n=1}^{\infty}$ in $\mathcal{A}_{\alpha}^{p}\cap\mathcal{A}_{\alpha}^{2}$ such that
\be
f=f_1+\sum_{n=2}^{\infty}(f_{n}-f_{n-1}), \quad
\|f_n-f\|_{p,\alpha}^{p} \rightarrow 0 \qquad \mathrm{as}\ n\rightarrow\infty,
\ee
and
\be
\|f_1\|_{p,\alpha}^{p}\leq\frac{3}{2}\|f\|_{p,\alpha}^{p}, \qquad \|f_n-f_{n-1}\|_{p,\alpha}^{p} \leq \frac{1}{2^{n}}\|f\|_{p,\alpha}^{p},\; \forall n>1.
\ee
Let $g_1=f_1$ and $g_n=f_n-f_{n-1}$ for $n>1.$ Since $g_n\in\mathcal{A}_{\alpha}^{p}\cap\mathcal{A}_{\alpha}^{2}(n\geq1),$ we can choose numbers $\lambda_{i}^{n}$ and $(p, \8, N)_{\alpha}$-atoms $a_{i}^{n}$ such that
\be
g_n=\sum_{i=1}^{\infty}\lambda_{i}^{n}P_{\alpha}(a_{i}^{n}),\ \ \sum_{i=1}^{\infty}|\lambda_{i}^{n}|^{p}\lesssim\|g_n\|_{p,\alpha}^{p}.
\ee
Hence,
\be
f=\sum_{n=1}^{\infty}g_{n}=\sum_{n=1}^{\infty}\sum_{i=1}^{\infty}\lambda_{i}^{n}P_{\alpha}(a_{i}^n)
\ee
and
\be
\sum_{n=1}^{\infty}\sum_{i=1}^{\infty}|\lambda_{i}^{n}|^{p}\lesssim\sum_{n=1}^{\infty}\|g_n\|_{p,\alpha}^{p}\lesssim\|f\|_{p,\alpha}^{p}.
\ee
This completes the proof of Theorem \ref{th:atomic-deco}.
\hfill$\Box$

\subsection{The case of $(p, q)_{\alpha}$-atoms}\label{(p,q)-atom}

The previous results can be extended to the case of $(p, q)_{\alpha}$-atoms for $1 < q < \8.$

\begin{definition}\label{df:(p,q)-atom}
Let $0<p \le 1 \le q \le \8$ and $p < q.$ Let $\alpha > -1.$ Let $N \ge N_{p, \alpha}$ be an integer. A measurable function $a$ on $\mathbb{B}_n$ is a $(p, q, N)_{\alpha}$-atom if it is either a bounded function with $\|a\|_{L^\infty}\leq1,$ or satisfies that there exist $z_0 \in \mathbb{B}^n$ and $r_0 >0$ such that
\begin{enumerate}[{\rm (1)}]
\item $a$ is supported in $B^{\varrho}(z_{0},r_{0});$

\item $\|a \|_{q, \alpha} \le v_{\alpha} (B^{\varrho}(z_{0},r_{0}))^{\frac{1}{q} -\frac{1}{p}};$

\item $\int_{\mathbb{B}_n}a(z)\,dv_{\alpha}(z) = 0;$

\item for all $\phi\in\mathcal{C}^\infty(B^{\varrho}(z_{0},r_{0}))$
\be
\left|\int_{\mathbb{B}^n}a(z)\phi(z)dv_{\alpha}(z)\right|\le
\|\phi\|_{\mathcal{S}_N (B^{\varrho}(z_{0},r_{0}))}
v_{\alpha}(B^{\varrho}(z_{0},r_{0}))^{1-\frac{1}{p}}.
\ee

\end{enumerate}
\end{definition}

By slightly modifying the proof of Theorem \ref{th:p-atomBergman}, we can prove that Theorem \ref{th:p-atomBergman} is still valid for $(p, q, N)_{\alpha}$-atoms.

\begin{theorem}\label{th:(p,q)-atomBergman}
Let $0<p \le 1 \le q < \8$ and $p<q.$ Let $\alpha > -1.$ Let $N \ge N_{p, \alpha}$ be an integer.
\begin{enumerate}[{\rm (1)}]

\item For any $(p, q, N)_{\alpha}$-atom $a,$ $\|a \|_{p, \alpha} \le 1.$

\item If $q >1,$ there is a constant $C>0$ depending only on $p, n,\alpha, q$ and $N$ such that
\be
\left \|P_{\alpha}(a) \right \|_{p, \alpha} \leq C
\ee
for any $(p, q, N)_{\alpha}$-atom $a.$

\item Let $q>1.$ If $\{a_k\}$ is a sequence of $(p, q, N)_{\alpha}$-atoms, then for any sequence $\{\lambda_k\}$ of complex numbers with $\sum_k |\lambda_k|^p < \8,$ the series $\sum_k \lambda_k P_{\alpha} (a_k)$ converges unconditionally in $\mathcal{A}^p_{\alpha}$ such that
\be
f : = \sum_k \lambda_k P_{\alpha} (a_k) \in \mathcal{A}^p_{\alpha} \quad \text{and} \quad \| f \|^p_{p , \alpha} \lesssim \sum_k |\lambda_k|^p.
\ee

\end{enumerate}
\end{theorem}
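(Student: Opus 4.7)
The plan is to mirror the proof of Theorem \ref{th:p-atomBergman} almost verbatim, with the single substitution of the $L^2$-boundedness of $P_\alpha$ by the $L^q$-boundedness of $P_\alpha$ for $1<q<\infty$ (cf.\ \cite[Theorem 2.11]{Zhu2005}). The three assertions fall out as follows.

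For (1), if $\|a\|_{L^\infty}\le 1$, then $\|a\|_{p,\alpha}\le 1$ since $v_\alpha$ is a probability measure. Otherwise $a$ is supported in $B^\varrho(z_0,r_0)$, and H\"older's inequality with exponent $q/p>1$ gives
\beq
\int_{B^\varrho(z_0,r_0)}|a|^p\,dv_\alpha \le \|a\|_{q,\alpha}^{p}\,v_\alpha(B^\varrho(z_0,r_0))^{1-p/q}\le v_\alpha(B^\varrho(z_0,r_0))^{p(1/q-1/p)+1-p/q}=1.
\eeq

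For (2), I would split again into the bounded case and the supported case. If $\|a\|_{L^\infty}\le 1$, then by the $L^q$-boundedness of $P_\alpha$ and the monotonicity $\|\cdot\|_{p,\alpha}\le\|\cdot\|_{q,\alpha}$ on the probability space $(\mathbb{B}_n,dv_\alpha)$,
\beq
\|P_\alpha(a)\|_{p,\alpha}\le \|P_\alpha(a)\|_{q,\alpha}\lesssim \|a\|_{q,\alpha}\le \|a\|_{\infty,\alpha}\le 1.
\eeq
If instead $a$ is supported in $B^\varrho(z_0,r_0)$, I split the integral over $B^\varrho(z_0,4r_0)$ and over $\mathbb{B}_n\setminus B^\varrho(z_0,4r_0)$, exactly as in the proof of Theorem \ref{th:p-atomBergman}. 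On the near set, H\"older and the $L^q$-boundedness yield
\beq
\int_{B^\varrho(z_0,4r_0)}|P_\alpha(a)|^p\,dv_\alpha \lesssim \|a\|_{q,\alpha}^p\,v_\alpha(B^\varrho(z_0,4r_0))^{1-p/q}\lesssim 1,
\eeq
where the last step uses condition (2) of Definition \ref{df:(p,q)-atom} together with the doubling property (Lemma \ref{le:Psedu-metric02}). On the far set, condition (4) of Definition \ref{df:(p,q)-atom} is identical to condition (4) in Definition \ref{df:p-atom}, so the annular decomposition $Q_k=\{2^{k+1}r_0\le\varrho(z,z_0)<2^{k+2}r_0\}$ together with the kernel estimate in Lemma \ref{le:Kernel} and the choice $N\ge N_{p,\alpha}$ goes through unchanged and yields a uniform bound.

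For (3), this is a routine consequence of (2) together with $p$-subadditivity of $\|\cdot\|_{p,\alpha}^p$ for $0<p\le 1$: the partial sums $\sum_{k\in F}\lambda_k P_\alpha(a_k)$ form a Cauchy net in $\A^p_\alpha$ since $\|\sum_{k\in F}\lambda_k P_\alpha(a_k)\|_{p,\alpha}^p\le \sum_{k\in F}|\lambda_k|^p\|P_\alpha(a_k)\|_{p,\alpha}^p \lesssim \sum_{k\in F}|\lambda_k|^p$. Unconditional convergence follows because this bound is independent of the ordering of $F$. There is no real obstacle here; the only delicate point is the restriction $q>1$ in (2)--(3), which is essential because $P_\alpha$ fails to be bounded on $L^1_\alpha(\mathbb{B}_n)$ and hence the near-set estimate breaks down for $q=1$.
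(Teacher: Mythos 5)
Your proposal is correct and follows exactly the route the paper intends: the paper proves this theorem by ``slightly modifying the proof of Theorem \ref{th:p-atomBergman}'', which amounts precisely to your substitution of the $L^2$-boundedness of $P_\alpha$ by its $L^q$-boundedness ($1<q<\infty$), H\"older's inequality with exponent $q/p$ for assertion (1) and the near-ball estimate, and the unchanged far-ball argument via condition (4) and Lemma \ref{le:Kernel}. Assertion (3) from (2) by $p$-subadditivity is likewise the same implication used in the paper.
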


On the other hand, it is easy to check that $(p, \8, N)_{\alpha}$-atoms are necessarily $(p, q, N)_{\alpha}$-atoms. Therefore, by Theorem \ref{th:atomic-deco} we have

\begin{theorem}\label{th:(p,q)-atomicdeco}
Let $0<p \le 1 \le q < \8$ and $p<q.$ Let $\alpha > -1.$ Let $N \ge N_{p, \alpha}$ be an integer. For each $f \in \mathcal{A}^p_{\alpha}$ there exist a sequence of complex numbers $\{ \lambda_j \}$ with $\sum_j |\lambda_j |^p < \8,$ and a sequence of $(p,q, N)_{\alpha}$-atoms $\{a_j\}$ such that
\begin{enumerate}[{\rm (1)}]

\item $f = \sum_j \lambda_j a_j$ in the sense of distributions;

\item $f = \sum_j \lambda_j a_j,$ where the series $\sum_j \lambda_j a_j$ converges unconditionally in $L^p_{\alpha} (\mathbb{B}_n);$

\item if $q>1,$ $f = \sum_j \lambda_j P_{\alpha}(a_j),$ where the series $\sum_j \lambda_j P_{\alpha} (a_j)$ converges unconditionally in $\A^p_{\alpha};$

\item $\sum_j |\lambda_j |^p \lesssim \| f \|^p_{p , \alpha}.$

\end{enumerate}

Consequently, for any $f \in \mathcal{A}^p_{\alpha}$ one has
\be
\| f \|^p_{p , \alpha} \approx \inf \bigg \{ \sum_j |\lambda_j |^p:\; f = \sum_j \lambda_j P_{\alpha} (a_j) \bigg \}
\ee
if $q>1,$ where the infimum is taken over all decompositions of $f$ described above.

\end{theorem}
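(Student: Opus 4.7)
The plan is to deduce Theorem \ref{th:(p,q)-atomicdeco} essentially for free from Theorem \ref{th:atomic-deco}, exploiting the inclusion of atom classes already flagged in the sentence preceding the statement. The first task is to verify carefully that every $(p, \infty, N)_{\alpha}$-atom is automatically a $(p, q, N)_{\alpha}$-atom. The support condition, the cancellation condition, and the moment condition (4) are literally the same in the two definitions, so only the size condition needs attention. For a $(p,\infty,N)_\alpha$-atom supported in $B^{\varrho}(z_0,r_0)$ with the pointwise bound $|a|\le v_\alpha(B^{\varrho}(z_0,r_0))^{-1/p}$, one computes
\[
\|a\|_{q,\alpha}^{q}=\int_{B^{\varrho}(z_0,r_0)}|a|^{q}\,dv_\alpha \le v_\alpha(B^{\varrho}(z_0,r_0))^{1-q/p},
\]
so $\|a\|_{q,\alpha}\le v_\alpha(B^{\varrho}(z_0,r_0))^{1/q-1/p}$, as required by Definition \ref{df:(p,q)-atom}(2); the bounded atoms with $\|a\|_{L^\infty}\le 1$ are admitted in both definitions as well.

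Next, given $f\in\mathcal{A}^p_\alpha$, I apply Theorem \ref{th:atomic-deco} to obtain a scalar sequence $\{\lambda_j\}\subset\mathbb{C}$ and $(p,\infty,N)_\alpha$-atoms $\{a_j\}$ with $\sum_j|\lambda_j|^p\lesssim\|f\|_{p,\alpha}^p$, together with the three modes of convergence listed there. By the preceding observation, the same sequences $\{\lambda_j\}$ and $\{a_j\}$ serve as a decomposition into $(p,q,N)_\alpha$-atoms, and this immediately yields assertions (1), (2), and (4) of Theorem \ref{th:(p,q)-atomicdeco}; assertion (3) for the $q=\infty$ case is the very content of Theorem \ref{th:atomic-deco}(3). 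When $1<q<\infty$, assertion (3) likewise follows since the $(p,\infty,N)_\alpha$-atoms $\{a_j\}$ are in particular $(p,q,N)_\alpha$-atoms and Theorem \ref{th:(p,q)-atomBergman}(3) guarantees that $\sum_j\lambda_j P_\alpha(a_j)$ converges unconditionally in $\A^p_\alpha$ with $\|\sum_j\lambda_j P_\alpha(a_j)\|_{p,\alpha}^p\lesssim\sum_j|\lambda_j|^p$.

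Finally, for the quasi-norm equivalence
\[
\|f\|_{p,\alpha}^{p}\approx\inf\Big\{\sum_j|\lambda_j|^p:\ f=\sum_j\lambda_j P_\alpha(a_j)\Big\}
\]
when $q>1$, the inequality $\lesssim$ comes from the decomposition just produced, for which $\sum_j|\lambda_j|^p\lesssim\|f\|_{p,\alpha}^p$; the converse $\gtrsim$ is exactly the quantitative content of Theorem \ref{th:(p,q)-atomBergman}(3) applied to any admissible atomic representation of $f$.

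Since all the substantive analytic work (the maximal-function estimates, the sharp kernel bounds of Lemma \ref{le:Kernel}, and the Whitney/partition-of-unity machinery) has already been discharged in proving Theorems \ref{th:p-atomBergman}, \ref{th:atomic-deco}, and \ref{th:(p,q)-atomBergman}, there is no real obstacle here; the only point that requires minor care is ensuring the $L^q$ normalization inequality $\|a\|_{q,\alpha}\le v_\alpha(B^\varrho(z_0,r_0))^{1/q-1/p}$ is strictly deduced from the pointwise bound in the $(p,\infty,N)_\alpha$ definition, which is the routine computation displayed above.
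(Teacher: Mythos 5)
Your proposal is correct and follows exactly the paper's route: the paper likewise observes that $(p,\infty,N)_{\alpha}$-atoms are automatically $(p,q,N)_{\alpha}$-atoms and then invokes Theorem \ref{th:atomic-deco} (together with Theorem \ref{th:(p,q)-atomBergman} for the $q>1$ convergence and the norm equivalence). Your explicit verification of the size condition $\|a\|_{q,\alpha}\le v_{\alpha}(B^{\varrho}(z_0,r_0))^{1/q-1/p}$ is precisely the ``easy to check'' step the paper leaves to the reader.
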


\subsection*{Acknowledgement} This paper is part of the second named author's Ph.D thesis, under the supervision of the first named author. This research was supported in part by the NSFC under Grant No. 11171338.

\end{document}